  \let\fiverm\fivrm
\def\@picture(#1,#2)(#3,#4){%
  \@picht #2\unitlength
  \setbox\@picbox\hbox to #1\unitlength\bgroup 
  \let\endpicture=\!latexendpicture
  \let\frame=\!latexframe
  \let\linethickness=\!latexlinethickness
  \let\multiput=\!latexmultiput
  \let\put=\!latexput
  \hskip -#3\unitlength \lower #4\unitlength \hbox\bgroup}
\font\fiverm=cmr5
\def\PiC{P\kern-.12em\lower.5ex\hbox{I}\kern-.075emC}
\def\PiCTeX{\PiC\kern-.11em\TeX}
\def\!ifnextchar#1#2#3{%
  \let\!testchar=#1%
  \def\!first{#2}%
  \def\!second{#3}%
  \futurelet\!nextchar\!testnext}
\def\!testnext{%
  \ifx \!nextchar \!spacetoken 
    \let\!next=\!skipspacetestagain
  \else
    \ifx \!nextchar \!testchar
      \let\!next=\!first
    \else 
      \let\!next=\!second 
    \fi 
  \fi
  \!next}
\def\\{\!skipspacetestagain} 
\def\\ {\futurelet\!nextchar\!testnext} 
\def\\{\let\!spacetoken= } \\  
\def\!tfor#1:=#2\do#3{%
  \edef\!fortemp{#2}%
  \ifx\!fortemp\!empty 
    \else
    \!tforloop#2\!nil\!nil\!!#1{#3}%
  \fi}
\def\!tforloop#1#2\!!#3#4{%
  \def#3{#1}%
  \ifx #3\!nnil
    \let\!nextwhile=\!fornoop
  \else
    #4\relax
    \let\!nextwhile=\!tforloop
  \fi 
  \!nextwhile#2\!!#3{#4}}
\def\!etfor#1:=#2\do#3{%
  \def\!!tfor{\!tfor#1:=}%
  \edef\!!!tfor{#2}%
  \expandafter\!!tfor\!!!tfor\do{#3}}
\def\!cfor#1:=#2\do#3{%
  \edef\!fortemp{#2}%
  \ifx\!fortemp\!empty 
  \else
    \!cforloop#2,\!nil,\!nil\!!#1{#3}%
  \fi}
\def\!cforloop#1,#2\!!#3#4{%
  \def#3{#1}%
  \ifx #3\!nnil
    \let\!nextwhile=\!fornoop 
  \else
    #4\relax
    \let\!nextwhile=\!cforloop
  \fi
  \!nextwhile#2\!!#3{#4}}
\def\!ecfor#1:=#2\do#3{%
  \def\!!cfor{\!cfor#1:=}%
  \edef\!!!cfor{#2}%
  \expandafter\!!cfor\!!!cfor\do{#3}}
\def\!empty{}
\def\!nnil{\!nil}
\def\!fornoop#1\!!#2#3{}
\def\!ifempty#1#2#3{%
  \edef\!emptyarg{#1}%
  \ifx\!emptyarg\!empty
    #2%
  \else
    #3%
  \fi}
\def\!getnext#1\from#2{%
  \expandafter\!gnext#2\!#1#2}%
\def\!gnext\\#1#2\!#3#4{%
  \def#3{#1}%
  \def#4{#2\\{#1}}%
  \ignorespaces}
\def\!getnextvalueof#1\from#2{%
  \expandafter\!gnextv#2\!#1#2}%
\def\!gnextv\\#1#2\!#3#4{%
  #3=#1%
  \def#4{#2\\{#1}}%
  \ignorespaces}
\def\!copylist#1\to#2{%
  \expandafter\!!copylist#1\!#2}
\def\!!copylist#1\!#2{%
  \def#2{#1}\ignorespaces}
\def\!wlet#1=#2{%
  \let#1=#2 
  \wlog{\string#1=\string#2}}
\def\!listaddon#1#2{%
  \expandafter\!!listaddon#2\!{#1}#2}
\def\!!listaddon#1\!#2#3{%
  \def#3{#1\\#2}}
\def\!rightappend#1\withCS#2\to#3{\expandafter\!!rightappend#3\!#2{#1}#3}
\def\!!rightappend#1\!#2#3#4{\def#4{#1#2{#3}}}
\def\!leftappend#1\withCS#2\to#3{\expandafter\!!leftappend#3\!#2{#1}#3}
\def\!!leftappend#1\!#2#3#4{\def#4{#2{#3}#1}}
\def\!lop#1\to#2{\expandafter\!!lop#1\!#1#2}
\def\!!lop\\#1#2\!#3#4{\def#4{#1}\def#3{#2}}
\def\!loop#1\repeat{\def\!body{#1}\!iterate}
\def\!iterate{\!body\let\!next=\!iterate\else\let\!next=\relax\fi\!next}
\def\!!loop#1\repeat{\def\!!body{#1}\!!iterate}
\def\!!iterate{\!!body\let\!!next=\!!iterate\else\let\!!next=\relax\fi\!!next}
\def\!removept#1#2{\edef#2{\expandafter\!!removePT\the#1}}
{\catcode`p=12 \catcode`t=12 \gdef\!!removePT#1pt{#1}}
\def\placevalueinpts of <#1> in #2 {%
  \!removept{#1}{#2}}
\def\!mlap#1{\hbox to 0pt{\hss#1\hss}}
\def\!vmlap#1{\vbox to 0pt{\vss#1\vss}}
\def\!not#1{%
  #1\relax
    \!switchfalse
  \else
    \!switchtrue
  \fi
  \if!switch
  \ignorespaces}
\def\wlog#1{}    
\newdimen\headingtoplotskip     
\newdimen\linethickness         
\newdimen\longticklength        
\newdimen\plotsymbolspacing     
\newdimen\shortticklength       
\newdimen\stackleading          
\newdimen\tickstovaluesleading  
\newdimen\totalarclength        
\newdimen\valuestolabelleading  
\newbox\!boxA                   
\newbox\!boxB                   
\newbox\!picbox                 
\newbox\!plotsymbol             
\newbox\!putobject              
\newbox\!shadesymbol            
\newdimen\!Xleft                
\newdimen\!Xright               
\newdimen\!Xsave                
\newdimen\!Ybot                 
\newdimen\!Ysave                
\newdimen\!Ytop                 
\newdimen\!angle                
\newdimen\!arclength            
\newdimen\!areabloc             
\newdimen\!arealloc             
\newdimen\!arearloc             
\newdimen\!areatloc             
\newdimen\!bshrinkage           
\newdimen\!checkbot             
\newdimen\!checkleft            
\newdimen\!checkright           
\newdimen\!checktop             
\newdimen\!dimenA               
\newdimen\!dimenB               
\newdimen\!dimenC               
\newdimen\!dimenD               
\newdimen\!dimenE               
\newdimen\!dimenF               
\newdimen\!dimenG               
\newdimen\!dimenH               
\newdimen\!dimenI               
\newdimen\!distacross           
\newdimen\!downlength           
\newdimen\!dp                   
\newdimen\!dshade               
\newdimen\!dxpos                
\newdimen\!dxprime              
\newdimen\!dypos                
\newdimen\!dyprime              
\newdimen\!ht                   
\newdimen\!leaderlength         
\newdimen\!lshrinkage           
\newdimen\!midarclength         
\newdimen\!offset               
\newdimen\!plotheadingoffset    
\newdimen\!plotsymbolxshift     
\newdimen\!plotsymbolyshift     
\newdimen\!plotxorigin          
\newdimen\!plotyorigin          
\newdimen\!rootten              
\newdimen\!rshrinkage           
\newdimen\!shadesymbolxshift    
\newdimen\!shadesymbolyshift    
\newdimen\!tenAa                
\newdimen\!tenAc                
\newdimen\!tenAe                
\newdimen\!tshrinkage           
\newdimen\!uplength             
\newdimen\!wd                   
\newdimen\!wmax                 
\newdimen\!wmin                 
\newdimen\!xB                   
\newdimen\!xC                   
\newdimen\!xE                   
\newdimen\!xM                   
\newdimen\!xS                   
\newdimen\!xaxislength          
\newdimen\!xdiff                
\newdimen\!xleft                
\newdimen\!xloc                 
\newdimen\!xorigin              
\newdimen\!xpivot               
\newdimen\!xpos                 
\newdimen\!xprime               
\newdimen\!xright               
\newdimen\!xshade               
\newdimen\!xshift               
\newdimen\!xtemp                
\newdimen\!xunit                
\newdimen\!xxE                  
\newdimen\!xxM                  
\newdimen\!xxS                  
\newdimen\!xxloc                
\newdimen\!yB                   
\newdimen\!yC                   
\newdimen\!yE                   
\newdimen\!yM                   
\newdimen\!yS                   
\newdimen\!yaxislength          
\newdimen\!ybot                 
\newdimen\!ydiff                
\newdimen\!yloc                 
\newdimen\!yorigin              
\newdimen\!ypivot               
\newdimen\!ypos                 
\newdimen\!yprime               
\newdimen\!yshade               
\newdimen\!yshift               
\newdimen\!ytemp                
\newdimen\!ytop                 
\newdimen\!yunit                
\newdimen\!yyE                  
\newdimen\!yyM                  
\newdimen\!yyS                  
\newdimen\!yyloc                
\newdimen\!zpt                  
\newif\if!axisvisible           
\newif\if!gridlinestoo          
\newif\if!keepPO                
\newif\if!placeaxislabel        
\newif\if!switch                
\newif\if!xswitch               
\newtoks\!axisLaBeL             
\newtoks\!keywordtoks           
\newwrite\!replotfile           
\def\!cosrotationangle{1}      
\def\!sinrotationangle{0}      
\def\!xpivotcoord{0}           
\def\!xref{0}                  
\def\!xshadesave{0}            
\def\!ypivotcoord{0}           
\def\!yref{0}                  
\def\!yshadesave{0}            
\def\!zero{0}                  
\let\wlog=\!!!wlog
\def\normalgraphs{%
  \longticklength=.4\baselineskip
  \shortticklength=.25\baselineskip
  \tickstovaluesleading=.25\baselineskip
  \valuestolabelleading=.8\baselineskip
  \linethickness=.4pt
  \stackleading=.17\baselineskip
  \headingtoplotskip=1.5\baselineskip
  \visibleaxes
  \ticksout
  \nogridlines
  \unloggedticks}
\def\setplotarea x from #1 to #2, y from #3 to #4 {%
  \!arealloc=\!M{#1}\!xunit \advance \!arealloc -\!xorigin
  \!areabloc=\!M{#3}\!yunit \advance \!areabloc -\!yorigin
  \!arearloc=\!M{#2}\!xunit \advance \!arearloc -\!xorigin
  \!areatloc=\!M{#4}\!yunit \advance \!areatloc -\!yorigin
  \!initinboundscheck
  \!xaxislength=\!arearloc  \advance\!xaxislength -\!arealloc
  \!yaxislength=\!areatloc  \advance\!yaxislength -\!areabloc
  \!plotheadingoffset=\!zpt
  \!dimenput {{\setbox0=\hbox{}\wd0=\!xaxislength\ht0=\!yaxislength\box0}}
     [bl] (\!arealloc,\!areabloc)}
\def\visibleaxes{%
  \def\!axisvisibility{\!axisvisibletrue}}
\def\!fixkeyword#1{%
  \errhelp=\!keywordhelp
  \errmessage{Unrecognized keyword `#1': \the\!keywordtoks{NEW KEYWORD}'}}
\def\fixkeyword#1{%
  \!nextkeyword#1 }
\def\axis {%
  \def\!nextkeyword##1 {%
    \expandafter\ifx\csname !axis##1\endcsname \relax
      \def\!next{\!fixkeyword{##1}}%
    \else
      \def\!next{\csname !axis##1\endcsname}%
    \fi
    \!next}%
  \!offset=\!zpt
  \!axisvisibility
  \!placeaxislabelfalse
  \!nextkeyword}
\def\!axisbottom{%
  \!axisylevel=\!areabloc
  \def\!tickxsign{0}%
  \def\!tickysign{-}%
  \def\!axissetup{\!axisxsetup}%
  \def\!axislabeltbrl{t}%
  \!nextkeyword}
\def\!axistop{%
  \!axisylevel=\!areatloc
  \def\!tickxsign{0}%
  \def\!tickysign{+}%
  \def\!axissetup{\!axisxsetup}%
  \def\!axislabeltbrl{b}%
  \!nextkeyword}
\def\!axisleft{%
  \!axisxlevel=\!arealloc
  \def\!tickxsign{-}%
  \def\!tickysign{0}%
  \def\!axissetup{\!axisysetup}%
  \def\!axislabeltbrl{r}%
  \!nextkeyword}
\def\!axisright{%
  \!axisxlevel=\!arearloc
  \def\!tickxsign{+}%
  \def\!tickysign{0}%
  \def\!axissetup{\!axisysetup}%
  \def\!axislabeltbrl{l}%
  \!nextkeyword}
\def\!axisshiftedto#1=#2 {%
  \if 0\!tickxsign
    \!axisylevel=\!M{#2}\!yunit
    \advance\!axisylevel -\!yorigin
  \else
    \!axisxlevel=\!M{#2}\!xunit
    \advance\!axisxlevel -\!xorigin
  \fi
  \!nextkeyword}
\def\!axisvisible{%
  \!axisvisibletrue  
  \!nextkeyword}
\def\!axisinvisible{%
  \!axisvisiblefalse
  \!nextkeyword}
\def\!axislabel#1 {%
  \!axisLaBeL={#1}%
  \!placeaxislabeltrue
  \!nextkeyword}
\def\csname !axis/\endcsname{%
  \!axissetup 
  \if!placeaxislabel
    \!placeaxislabel
  \fi
  \if +\!tickysign 
    \!dimenA=\!axisylevel
    \advance\!dimenA \!offset 
    \advance\!dimenA -\!areatloc 
    \ifdim \!dimenA>\!plotheadingoffset
      \!plotheadingoffset=\!dimenA 
    \fi
  \fi}
\def\grid #1 #2 {%
  \!countA=#1\advance\!countA 1
  \axis bottom invisible ticks length <\!zpt> andacross quantity {\!countA} /
  \!countA=#2\advance\!countA 1
  \axis left   invisible ticks length <\!zpt> andacross quantity {\!countA} / }
\def\plotheading#1 {%
  \advance\!plotheadingoffset \headingtoplotskip
  \!dimenput {#1} [B] <.5\!xaxislength,\!plotheadingoffset>
    (\!arealloc,\!areatloc)}
\def\!axisxsetup{%
  \!axisxlevel=\!arealloc
  \!axisstart=\!arealloc
  \!axisend=\!arearloc
  \!axisLength=\!xaxislength
  \!!origin=\!xorigin
  \!!unit=\!xunit
  \!xswitchtrue
  \if!axisvisible 
    \!makeaxis
  \fi}
\def\!axisysetup{%
  \!axisylevel=\!areabloc
  \!axisstart=\!areabloc
  \!axisend=\!areatloc
  \!axisLength=\!yaxislength
  \!!origin=\!yorigin
  \!!unit=\!yunit
  \!xswitchfalse
  \if!axisvisible
    \!makeaxis
  \fi}
\def\!makeaxis{%
  \setbox\!boxA=\hbox{
    \beginpicture
      \!setdimenmode
      \setcoordinatesystem point at {\!zpt} {\!zpt}   
      \putrule from {\!zpt} {\!zpt} to
        {\!tickysign\!tickysign\!axisLength} 
        {\!tickxsign\!tickxsign\!axisLength}
    \endpicturesave <\!Xsave,\!Ysave>}%
    \wd\!boxA=\!zpt
    \!placetick\!axisstart}
\def\!placeaxislabel{%
  \advance\!offset \valuestolabelleading
  \if!xswitch
    \!dimenput {\the\!axisLaBeL} [\!axislabeltbrl]
      <.5\!axisLength,\!tickysign\!offset> (\!axisxlevel,\!axisylevel)
    \advance\!offset \!dp  
    \advance\!offset \!ht  
  \else
    \!dimenput {\the\!axisLaBeL} [\!axislabeltbrl]
      <\!tickxsign\!offset,.5\!axisLength> (\!axisxlevel,\!axisylevel)
  \fi
  \!axisLaBeL={}}
\def\arrow <#1> [#2,#3]{%
  \!ifnextchar<{\!arrow{#1}{#2}{#3}}{\!arrow{#1}{#2}{#3}<\!zpt,\!zpt> }}
\def\!arrow#1#2#3<#4,#5> from #6 #7 to #8 #9 {%
%
  \!xloc=\!M{#8}\!xunit   
  \!yloc=\!M{#9}\!yunit
  \!dxpos=\!xloc  \!dimenA=\!M{#6}\!xunit  \advance \!dxpos -\!dimenA
  \!dypos=\!yloc  \!dimenA=\!M{#7}\!yunit  \advance \!dypos -\!dimenA
  \let\!MAH=\!M
  \!setdimenmode
  \!xshift=#4\relax  \!yshift=#5\relax
  \!reverserotateonly\!xshift\!yshift
  \advance\!xshift\!xloc  \advance\!yshift\!yloc
%
  \!xS=-\!dxpos  \advance\!xS\!xshift
  \!yS=-\!dypos  \advance\!yS\!yshift
  \!start (\!xS,\!yS)
  \!ljoin (\!xshift,\!yshift)
%
  \!Pythag\!dxpos\!dypos\!arclength
  \!divide\!dxpos\!arclength\!dxpos  
  \!dxpos=32\!dxpos  \!removept\!dxpos\!!cos
  \!divide\!dypos\!arclength\!dypos  
  \!dypos=32\!dypos  \!removept\!dypos\!!sin
%
  \!halfhead{#1}{#2}{#3}
  \!halfhead{#1}{-#2}{-#3}
  \let\!M=\!MAH
  \ignorespaces}
  \def\!halfhead#1#2#3{%
    \!dimenC=-#1%
    \divide \!dimenC 2 
    \!dimenD=#2\!dimenC
    \!rotate(\!dimenC,\!dimenD)by(\!!cos,\!!sin)to(\!xM,\!yM)
    \!dimenC=-#1
    \!dimenD=#3\!dimenC
    \!dimenD=.5\!dimenD
    \!rotate(\!dimenC,\!dimenD)by(\!!cos,\!!sin)to(\!xE,\!yE)
    \!start (\!xshift,\!yshift)
    \advance\!xM\!xshift  \advance\!yM\!yshift
    \advance\!xE\!xshift  \advance\!yE\!yshift
    \!qjoin (\!xM,\!yM) (\!xE,\!yE) 
    \ignorespaces}
\def\betweenarrows #1#2 from #3 #4 to #5 #6 {%
  \!xloc=\!M{#3}\!xunit  \!xxloc=\!M{#5}\!xunit%
  \!yloc=\!M{#4}\!yunit  \!yyloc=\!M{#6}\!yunit%
  \!dxpos=\!xxloc  \advance\!dxpos by -\!xloc
  \!dypos=\!yyloc  \advance\!dypos by -\!yloc
  \advance\!xloc .5\!dxpos
  \advance\!yloc .5\!dypos
  \let\!MBA=\!M
  \!setdimenmode
  \ifdim\!dypos=\!zpt
    \ifdim\!dxpos<\!zpt \!dxpos=-\!dxpos \fi
    \put {\!lrarrows{\!dxpos}{#1}}#2{} at {\!xloc} {\!yloc}
  \else
    \ifdim\!dxpos=\!zpt
      \ifdim\!dypos<\!zpt \!dypos=-\!zpt \fi
      \put {\!udarrows{\!dypos}{#1}}#2{} at {\!xloc} {\!yloc}
    \fi
  \fi
  \let\!M=\!MBA
  \ignorespaces}
\def\!lrarrows#1#2{
  {\setbox\!boxA=\hbox{$\mkern-2mu\mathord-\mkern-2mu$}%
   \setbox\!boxB=\hbox{$\leftarrow$}\!dimenE=\ht\!boxB
   \setbox\!boxB=\hbox{}\ht\!boxB=2\!dimenE
   \hbox to #1{$\mathord\leftarrow\mkern-6mu
     \cleaders\copy\!boxA\hfil
     \mkern-6mu\mathord-$%
     \kern.4em $\vcenter{\box\!boxB}$$\vcenter{\hbox{#2}}$\kern.4em
     $\mathord-\mkern-6mu
     \cleaders\copy\!boxA\hfil
     \mkern-6mu\mathord\rightarrow$}}}
\def\!udarrows#1#2{
  {\setbox\!boxB=\hbox{#2}%
   \setbox\!boxA=\hbox to \wd\!boxB{\hss$\vert$\hss}%
   \!dimenE=\ht\!boxA \advance\!dimenE \dp\!boxA \divide\!dimenE 2
   \vbox to #1{\offinterlineskip
      \vskip .05556\!dimenE
      \hbox to \wd\!boxB{\hss$\mkern.4mu\uparrow$\hss}\vskip-\!dimenE
      \cleaders\copy\!boxA\vfil
      \vskip-\!dimenE\copy\!boxA
      \vskip\!dimenE\copy\!boxB\vskip.4em
      \copy\!boxA\vskip-\!dimenE
      \cleaders\copy\!boxA\vfil
      \vskip-\!dimenE \hbox to \wd\!boxB{\hss$\mkern.4mu\downarrow$\hss}
      \vskip .05556\!dimenE}}}
\def\putbar#1breadth <#2> from #3 #4 to #5 #6 {%
  \!xloc=\!M{#3}\!xunit  \!xxloc=\!M{#5}\!xunit%
  \!yloc=\!M{#4}\!yunit  \!yyloc=\!M{#6}\!yunit%
  \!dypos=\!yyloc  \advance\!dypos by -\!yloc
  \!dimenI=#2  
  \ifdim \!dimenI=\!zpt 
    \putrule#1from {#3} {#4} to {#5} {#6} 
  \else 
    \let\!MBar=\!M
    \!setdimenmode 
    \divide\!dimenI 2
    \ifdim \!dypos=\!zpt             
      \advance \!yloc -\!dimenI 
      \advance \!yyloc \!dimenI
    \else
      \advance \!xloc -\!dimenI 
      \advance \!xxloc \!dimenI
    \fi
    \putrectangle#1corners at {\!xloc} {\!yloc} and {\!xxloc} {\!yyloc}
    \let\!M=\!MBar 
  \fi
  \ignorespaces}
\def\setbars#1breadth <#2> baseline at #3 = #4 {%
  \edef\!barshift{#1}%
  \edef\!barbreadth{#2}%
  \edef\!barorientation{#3}%
  \edef\!barbaseline{#4}%
  \def\!bardobaselabel{\!bardoendlabel}%
  \def\!bardoendlabel{\!barfinish}%
  \let\!drawcurve=\!barcurve
  \!setbars}
\def\!setbars{%
  \futurelet\!nextchar\!!setbars}
\def\!!setbars{%
  \if b\!nextchar
    \def\!!!setbars{\!setbarsbget}%
  \else 
    \if e\!nextchar
      \def\!!!setbars{\!setbarseget}%
    \else
      \def\!!!setbars{\relax}%
    \fi
  \fi
  \!!!setbars}
\def\!setbarsbget baselabels (#1) {%
  \def\!barbaselabelorientation{#1}%
  \def\!bardobaselabel{\!!bardobaselabel}%
  \!setbars}
\def\!setbarseget endlabels (#1) {%
  \edef\!barendlabelorientation{#1}%
  \def\!bardoendlabel{\!!bardoendlabel}%
  \!setbars}
\def\!barcurve #1 #2 {%
  \if y\!barorientation
    \def\!basexarg{#1}%
    \def\!baseyarg{\!barbaseline}%
  \else
    \def\!basexarg{\!barbaseline}%
    \def\!baseyarg{#2}%
  \fi
  \expandafter\putbar\!barshift breadth <\!barbreadth> from {\!basexarg}
    {\!baseyarg} to {#1} {#2}
  \def\!endxarg{#1}%
  \def\!endyarg{#2}%
  \!bardobaselabel}
\def\!!bardobaselabel "#1" {%
  \put {#1}\!barbaselabelorientation{} at {\!basexarg} {\!baseyarg}
  \!bardoendlabel}
\def\!!bardoendlabel "#1" {%
  \put {#1}\!barendlabelorientation{} at {\!endxarg} {\!endyarg}
  \!barfinish}
\def\!barfinish{%
  \!ifnextchar/{\!finish}{\!barcurve}}
\def\putrectangle{%
  \!ifnextchar<{\!putrectangle}{\!putrectangle<\!zpt,\!zpt> }}
\def\!putrectangle<#1,#2> corners at #3 #4 and #5 #6 {%
%
  \!xone=\!M{#3}\!xunit  \!xtwo=\!M{#5}\!xunit%
  \!yone=\!M{#4}\!yunit  \!ytwo=\!M{#6}\!yunit%
  \ifdim \!xtwo<\!xone
    \!dimenI=\!xone  \!xone=\!xtwo  \!xtwo=\!dimenI
  \fi
  \ifdim \!ytwo<\!yone
    \!dimenI=\!yone  \!yone=\!ytwo  \!ytwo=\!dimenI
  \fi
  \!dimenI=#1\relax  \advance\!xone\!dimenI  \advance\!xtwo\!dimenI
  \!dimenI=#2\relax  \advance\!yone\!dimenI  \advance\!ytwo\!dimenI
  \let\!MRect=\!M
  \!setdimenmode
%
  \!shaderectangle
%
  \!dimenI=.5\linethickness
  \advance \!xone  -\!dimenI
  \advance \!xtwo   \!dimenI
  \putrule from {\!xone} {\!yone} to {\!xtwo} {\!yone} 
  \putrule from {\!xone} {\!ytwo} to {\!xtwo} {\!ytwo} 
%
  \advance \!xone   \!dimenI
  \advance \!xtwo  -\!dimenI%
  \advance \!yone  -\!dimenI
  \advance \!ytwo   \!dimenI
  \putrule from {\!xone} {\!yone} to {\!xone} {\!ytwo} 
  \putrule from {\!xtwo} {\!yone} to {\!xtwo} {\!ytwo} 
  \let\!M=\!MRect
  \ignorespaces}
\def\shaderectanglesoff{%
  \def\!shaderectangle{}%
  \ignorespaces}
\def\!!shaderectangle{%
  \!dimenA=\!xtwo  \advance \!dimenA -\!xone
  \!dimenB=\!ytwo  \advance \!dimenB -\!yone
  \ifdim \!dimenA<\!dimenB
    \!startvshade (\!xone,\!yone,\!ytwo)
    \!lshade      (\!xtwo,\!yone,\!ytwo)
  \else
    \!starthshade (\!yone,\!xone,\!xtwo)
    \!lshade      (\!ytwo,\!xone,\!xtwo)
  \fi
  \ignorespaces}
\def\frame{%
  \!ifnextchar<{\!frame}{\!frame<\!zpt> }}
\long\def\!frame<#1> #2{%
  \beginpicture
    \setcoordinatesystem units <1pt,1pt> point at 0 0 
    \put {#2} [Bl] at 0 0 
    \!dimenA=#1\relax
    \!dimenB=\!wd \advance \!dimenB \!dimenA
    \!dimenC=\!ht \advance \!dimenC \!dimenA
    \!dimenD=\!dp \advance \!dimenD \!dimenA
    \let\!MFr=\!M
    \!setdimenmode
    \putrectangle corners at {-\!dimenA} {-\!dimenD} and {\!dimenB} {\!dimenC}
    \!setcoordmode
    \let\!M=\!MFr
  \endpicture
  \ignorespaces}
\def\rectangle <#1> <#2> {%
  \setbox0=\hbox{}\wd0=#1\ht0=#2\frame {\box0}}
\def\plot{%
  \!ifnextchar"{\!plotfromfile}{\!drawcurve}}
\def\!plotfromfile"#1"{%
  \expandafter\!drawcurve \input #1 /}
\def\setquadratic{%
  \let\!drawcurve=\!qcurve
  \let\!!Shade=\!!qShade
  \let\!!!Shade=\!!!qShade}
\def\setlinear{%
  \let\!drawcurve=\!lcurve
  \let\!!Shade=\!!lShade
  \let\!!!Shade=\!!!lShade}
\def\sethistograms{%
  \let\!drawcurve=\!hcurve}
\def\!qcurve #1 #2 {%
  \!start (#1,#2)
  \!Qjoin}
\def\!Qjoin#1 #2 #3 #4 {%
  \!qjoin (#1,#2) (#3,#4)             
  \!ifnextchar/{\!finish}{\!Qjoin}}
\def\!lcurve #1 #2 {%
  \!start (#1,#2)
  \!Ljoin}
\def\!Ljoin#1 #2 {%
  \!ljoin (#1,#2)                    
  \!ifnextchar/{\!finish}{\!Ljoin}}
\def\!finish/{\ignorespaces}
\def\!hcurve #1 #2 {%
  \edef\!hxS{#1}%
  \edef\!hyS{#2}%
  \!hjoin}
\def\!hjoin#1 #2 {%
  \putrectangle corners at {\!hxS} {\!hyS} and {#1} {#2}
  \edef\!hxS{#1}%
  \!ifnextchar/{\!finish}{\!hjoin}}
\def\vshade #1 #2 #3 {%
  \!startvshade (#1,#2,#3)
  \!Shadewhat}
\def\hshade #1 #2 #3 {%
  \!starthshade (#1,#2,#3)
  \!Shadewhat}
\def\!Shadewhat{%
  \futurelet\!nextchar\!Shade}
\def\!Shade{%
  \if <\!nextchar
    \def\!nextShade{\!!Shade}%
  \else
    \if /\!nextchar
      \def\!nextShade{\!finish}%
    \else
      \def\!nextShade{\!!!Shade}%
    \fi
  \fi
  \!nextShade}
\def\!!lShade<#1> #2 #3 #4 {%
  \!lshade <#1> (#2,#3,#4)                 
  \!Shadewhat}
\def\!!!lShade#1 #2 #3 {%
  \!lshade (#1,#2,#3)
  \!Shadewhat} 
\def\!!qShade<#1> #2 #3 #4 #5 #6 #7 {%
  \!qshade <#1> (#2,#3,#4) (#5,#6,#7)      
  \!Shadewhat}
\def\!!!qShade#1 #2 #3 #4 #5 #6 {%
  \!qshade (#1,#2,#3) (#4,#5,#6)
  \!Shadewhat} 
\def\setdashpattern <#1>{%
  \def\!Flist{}\def\!Blist{}\def\!UDlist{}%
  \!countA=0
  \!ecfor\!item:=#1\do{%
    \!dimenA=\!item\relax
    \expandafter\!rightappend\the\!dimenA\withCS{\\}\to\!UDlist%
    \advance\!countA  1
    \ifodd\!countA
      \expandafter\!rightappend\the\!dimenA\withCS{\!Rule}\to\!Flist%
      \expandafter\!leftappend\the\!dimenA\withCS{\!Rule}\to\!Blist%
    \else 
      \expandafter\!rightappend\the\!dimenA\withCS{\!Skip}\to\!Flist%
      \expandafter\!leftappend\the\!dimenA\withCS{\!Skip}\to\!Blist%
    \fi}%
  \!leaderlength=\!zpt
  \def\!Rule##1{\advance\!leaderlength  ##1}%
  \def\!Skip##1{\advance\!leaderlength  ##1}%
  \!Flist%
  \ifdim\!leaderlength>\!zpt 
  \else
    \def\!Flist{\!Skip{24in}}\def\!Blist{\!Skip{24in}}\ignorespaces
    \def\!UDlist{\\{\!zpt}\\{24in}}\ignorespaces
    \!leaderlength=24in
  \fi
  \!dashingon}
\def\!dashingon{%
  \def\!advancedashing{\!!advancedashing}%
  \def\!drawlinearsegment{\!lineardashed}%
  \def\!puthline{\!putdashedhline}%
  \def\!putvline{\!putdashedvline}%
  \ignorespaces}%
\def\!dashingoff{%
  \def\!advancedashing{\relax}%
  \def\!drawlinearsegment{\!linearsolid}%
  \def\!puthline{\!putsolidhline}%
  \def\!putvline{\!putsolidvline}%
  \ignorespaces}
\def\setdots{%
  \!ifnextchar<{\!setdots}{\!setdots<5pt>}}
\def\!setdots<#1>{%
  \!dimenB=#1\advance\!dimenB -\plotsymbolspacing
  \ifdim\!dimenB<\!zpt
    \!dimenB=\!zpt
  \fi
\setdashpattern <\plotsymbolspacing,\!dimenB>}
\def\setdotsnear <#1> for <#2>{%
  \!dimenB=#2\relax  \advance\!dimenB -.05pt  
  \!dimenC=#1\relax  \!countA=\!dimenC 
  \!dimenD=\!dimenB  \advance\!dimenD .5\!dimenC  \!countB=\!dimenD
  \divide \!countB  \!countA
  \ifnum 1>\!countB 
    \!countB=1
  \fi
  \divide\!dimenB  \!countB
  \setdots <\!dimenB>}
\def\setdashes{%
  \!ifnextchar<{\!setdashes}{\!setdashes<5pt>}}
\def\!setdashes<#1>{\setdashpattern <#1,#1>}
\def\setdashesnear <#1> for <#2>{%
  \!dimenB=#2\relax  
  \!dimenC=#1\relax  \!countA=\!dimenC 
  \!dimenD=\!dimenB  \advance\!dimenD .5\!dimenC  \!countB=\!dimenD
  \divide \!countB  \!countA
  \ifodd \!countB 
  \else 
    \advance \!countB  1
  \fi
  \divide\!dimenB  \!countB
  \setdashes <\!dimenB>}
\def\setsolid{%
  \def\!Flist{\!Rule{24in}}\def\!Blist{\!Rule{24in}}%
  \def\!UDlist{\\{24in}\\{\!zpt}}%
  \!dashingoff}  
\def\!divide#1#2#3{%
  \!dimenB=#1
  \!dimenC=#2
  \!dimenD=\!dimenB
  \divide \!dimenD \!dimenC
  \!dimenA=\!dimenD
  \multiply\!dimenD \!dimenC
  \advance\!dimenB -\!dimenD
  \!dimenD=\!dimenC
    \ifdim\!dimenD<\!zpt \!dimenD=-\!dimenD 
  \fi
  \ifdim\!dimenD<64pt
    \!divstep[\!tfs]\!divstep[\!tfs]%
  \else 
    \!!divide
  \fi
  #3=\!dimenA\ignorespaces}
\def\!!divide{%
  \ifdim\!dimenD<256pt
    \!divstep[64]\!divstep[32]\!divstep[32]%
  \else 
    \!divstep[8]\!divstep[8]\!divstep[8]\!divstep[8]\!divstep[8]%
    \!dimenA=2\!dimenA
  \fi}
\def\!divstep[#1]{
  \!dimenB=#1\!dimenB
  \!dimenD=\!dimenB
    \divide \!dimenD by \!dimenC
  \!dimenA=#1\!dimenA
    \advance\!dimenA by \!dimenD%
  \multiply\!dimenD by \!dimenC
    \advance\!dimenB by -\!dimenD}
\def\Divide <#1> by <#2> forming <#3> {%
  \!divide{#1}{#2}{#3}}
\def\circulararc{%
  \ellipticalarc axes ratio 1:1 }
\def\ellipticalarc axes ratio #1:#2 #3 degrees from #4 #5 center at #6 #7 {%
  \!angle=#3pt\relax
  \ifdim\!angle>\!zpt 
    \def\!sign{}
  \else 
    \def\!sign{-}\!angle=-\!angle
  \fi
  \!xxloc=\!M{#6}\!xunit
  \!yyloc=\!M{#7}\!yunit     
  \!xxS=\!M{#4}\!xunit
  \!yyS=\!M{#5}\!yunit
  \advance\!xxS -\!xxloc
  \advance\!yyS -\!yyloc
  \!divide\!xxS{#1pt}\!xxS 
  \!divide\!yyS{#2pt}\!yyS 
  \let\!MC=\!M
  \!setdimenmode
  \!xS=#1\!xxS  \advance\!xS\!xxloc
  \!yS=#2\!yyS  \advance\!yS\!yyloc
  \!start (\!xS,\!yS)%
  \!loop\ifdim\!angle>14.9999pt
    \!rotate(\!xxS,\!yyS)by(\!cos,\!sign\!sin)to(\!xxM,\!yyM) 
    \!rotate(\!xxM,\!yyM)by(\!cos,\!sign\!sin)to(\!xxE,\!yyE)
    \!xM=#1\!xxM  \advance\!xM\!xxloc  \!yM=#2\!yyM  \advance\!yM\!yyloc
    \!xE=#1\!xxE  \advance\!xE\!xxloc  \!yE=#2\!yyE  \advance\!yE\!yyloc
    \!qjoin (\!xM,\!yM) (\!xE,\!yE)
    \!xxS=\!xxE  \!yyS=\!yyE 
    \advance \!angle -15pt
  \repeat
  \ifdim\!angle>\!zpt
    \!angle=100.53096\!angle
    \divide \!angle 360 
    \!sinandcos\!angle\!!sin\!!cos
    \!rotate(\!xxS,\!yyS)by(\!!cos,\!sign\!!sin)to(\!xxM,\!yyM) 
    \!rotate(\!xxM,\!yyM)by(\!!cos,\!sign\!!sin)to(\!xxE,\!yyE)
    \!xM=#1\!xxM  \advance\!xM\!xxloc  \!yM=#2\!yyM  \advance\!yM\!yyloc
    \!xE=#1\!xxE  \advance\!xE\!xxloc  \!yE=#2\!yyE  \advance\!yE\!yyloc
    \!qjoin (\!xM,\!yM) (\!xE,\!yE)
  \fi
  \let\!M=\!MC
  \ignorespaces}
\def\!rotate(#1,#2)by(#3,#4)to(#5,#6){%
  \!dimenA=#3#1\advance \!dimenA -#4#2
  \!dimenB=#3#2\advance \!dimenB  #4#1
  \divide \!dimenA 32  \divide \!dimenB 32 
  #5=\!dimenA  #6=\!dimenB
  \ignorespaces}
\def\!sin{4.17684}
\def\!cos{31.72624}
\def\!sinandcos#1#2#3{%
 \!dimenD=#1
 \!dimenA=\!dimenD
 \!dimenB=32pt
 \!removept\!dimenD\!value
 \!dimenC=\!dimenD
 \!dimenC=\!value\!dimenC \divide\!dimenC by 64 
 \advance\!dimenB by -\!dimenC
 \!dimenC=\!value\!dimenC \divide\!dimenC by 96 
 \advance\!dimenA by -\!dimenC
 \!dimenC=\!value\!dimenC \divide\!dimenC by 128 
 \advance\!dimenB by \!dimenC%
 \!removept\!dimenA#2
 \!removept\!dimenB#3
 \ignorespaces}
\def\putrule#1from #2 #3 to #4 #5 {%
  \!xloc=\!M{#2}\!xunit  \!xxloc=\!M{#4}\!xunit%
  \!yloc=\!M{#3}\!yunit  \!yyloc=\!M{#5}\!yunit%
  \!dxpos=\!xxloc  \advance\!dxpos by -\!xloc
  \!dypos=\!yyloc  \advance\!dypos by -\!yloc
  \ifdim\!dypos=\!zpt
    \def\!!Line{\!puthline{#1}}\ignorespaces
  \else
    \ifdim\!dxpos=\!zpt
      \def\!!Line{\!putvline{#1}}\ignorespaces
    \else 
       \def\!!Line{}
    \fi
  \fi
  \let\!ML=\!M
  \!setdimenmode
  \!!Line%
  \let\!M=\!ML
  \ignorespaces}
\def\!putsolidhline#1{%
  \ifdim\!dxpos>\!zpt 
    \put{\!hline\!dxpos}#1[l] at {\!xloc} {\!yloc}
  \else 
    \put{\!hline{-\!dxpos}}#1[l] at {\!xxloc} {\!yyloc}
  \fi
  \ignorespaces}
\def\!putsolidvline#1{%
  \ifdim\!dypos>\!zpt 
    \put{\!vline\!dypos}#1[b] at {\!xloc} {\!yloc}
  \else 
    \put{\!vline{-\!dypos}}#1[b] at {\!xxloc} {\!yyloc}
  \fi
  \ignorespaces}
\def\!hline#1{\hbox to #1{\leaders \hrule height\linethickness\hfill}}
\def\!vline#1{\vbox to #1{\leaders \vrule width\linethickness\vfill}}
\def\!putdashedhline#1{%
  \ifdim\!dxpos>\!zpt 
    \!DLsetup\!Flist\!dxpos
    \put{\hbox to \!totalleaderlength{\!hleaders}\!hpartialpattern\!Rtrunc}
      #1[l] at {\!xloc} {\!yloc} 
  \else 
    \!DLsetup\!Blist{-\!dxpos}
    \put{\!hpartialpattern\!Ltrunc\hbox to \!totalleaderlength{\!hleaders}}
      #1[r] at {\!xloc} {\!yloc} 
  \fi
  \ignorespaces}
\def\!putdashedvline#1{%
  \!dypos=-\!dypos
  \ifdim\!dypos>\!zpt 
    \!DLsetup\!Flist\!dypos 
    \put{\vbox{\vbox to \!totalleaderlength{\!vleaders}
      \!vpartialpattern\!Rtrunc}}#1[t] at {\!xloc} {\!yloc} 
  \else 
    \!DLsetup\!Blist{-\!dypos}
    \put{\vbox{\!vpartialpattern\!Ltrunc
      \vbox to \!totalleaderlength{\!vleaders}}}#1[b] at {\!xloc} {\!yloc} 
  \fi
  \ignorespaces}
\def\!DLsetup#1#2{
  \let\!RSlist=#1
  \!countB=#2
  \!countA=\!leaderlength
  \divide\!countB by \!countA
  \!totalleaderlength=\!countB\!leaderlength
  \!Rresiduallength=#2%
  \advance \!Rresiduallength by -\!totalleaderlength
  \!Lresiduallength=\!leaderlength
  \advance \!Lresiduallength by -\!Rresiduallength
  \ignorespaces}
\def\!hleaders{%
  \def\!Rule##1{\vrule height\linethickness width##1}%
  \def\!Skip##1{\hskip##1}%
  \leaders\hbox{\!RSlist}\hfill}
\def\!hpartialpattern#1{%
  \!dimenA=\!zpt \!dimenB=\!zpt 
  \def\!Rule##1{#1{##1}\vrule height\linethickness width\!dimenD}%
  \def\!Skip##1{#1{##1}\hskip\!dimenD}%
  \!RSlist}
\def\!vleaders{%
  \def\!Rule##1{\hrule width\linethickness height##1}%
  \def\!Skip##1{\vskip##1}%
  \leaders\vbox{\!RSlist}\vfill}
\def\!vpartialpattern#1{%
  \!dimenA=\!zpt \!dimenB=\!zpt 
  \def\!Rule##1{#1{##1}\hrule width\linethickness height\!dimenD}%
  \def\!Skip##1{#1{##1}\vskip\!dimenD}%
  \!RSlist}
\def\!Rtrunc#1{\!trunc{#1}>\!Rresiduallength}
\def\!Ltrunc#1{\!trunc{#1}<\!Lresiduallength}
\def\!trunc#1#2#3{%
  \!dimenA=\!dimenB         
  \advance\!dimenB by #1%
  \!dimenD=\!dimenB  \ifdim\!dimenD#2#3\!dimenD=#3\fi
  \!dimenC=\!dimenA  \ifdim\!dimenC#2#3\!dimenC=#3\fi
  \advance \!dimenD by -\!dimenC}
\def\!start (#1,#2){%
  \!plotxorigin=\!xorigin  \advance \!plotxorigin by \!plotsymbolxshift
  \!plotyorigin=\!yorigin  \advance \!plotyorigin by \!plotsymbolyshift
  \!xS=\!M{#1}\!xunit \!yS=\!M{#2}\!yunit
  \!rotateaboutpivot\!xS\!yS
  \!copylist\!UDlist\to\!!UDlist
  \!getnextvalueof\!downlength\from\!!UDlist
  \!distacross=\!zpt
  \!intervalno=0 
  \global\totalarclength=\!zpt
  \ignorespaces}
\def\!ljoin (#1,#2){%
  \advance\!intervalno by 1
  \!xE=\!M{#1}\!xunit \!yE=\!M{#2}\!yunit
  \!rotateaboutpivot\!xE\!yE
  \!xdiff=\!xE \advance \!xdiff by -\!xS
  \!ydiff=\!yE \advance \!ydiff by -\!yS
  \!Pythag\!xdiff\!ydiff\!arclength
  \global\advance \totalarclength by \!arclength%
  \!drawlinearsegment
  \!xS=\!xE \!yS=\!yE
  \ignorespaces}
\def\!linearsolid{%
  \!npoints=\!arclength
  \!countA=\plotsymbolspacing
  \divide\!npoints by \!countA
  \ifnum \!npoints<1 
    \!npoints=1 
  \fi
  \divide\!xdiff by \!npoints
  \divide\!ydiff by \!npoints
  \!xpos=\!xS \!ypos=\!yS
  \loop\ifnum\!npoints>-1
    \!plotifinbounds
    \advance \!xpos by \!xdiff
    \advance \!ypos by \!ydiff
    \advance \!npoints by -1
  \repeat
  \ignorespaces}
\def\!lineardashed{%
  \ifdim\!distacross>\!arclength
    \advance \!distacross by -\!arclength  
  \else
    \loop\ifdim\!distacross<\!arclength
      \!divide\!distacross\!arclength\!dimenA
      \!removept\!dimenA\!t
      \!xpos=\!t\!xdiff \advance \!xpos by \!xS
      \!ypos=\!t\!ydiff \advance \!ypos by \!yS
      \!plotifinbounds
      \advance\!distacross by \plotsymbolspacing
      \!advancedashing
    \repeat  
    \advance \!distacross by -\!arclength
  \fi
  \ignorespaces}
\def\!!advancedashing{%
  \advance\!downlength by -\plotsymbolspacing
  \ifdim \!downlength>\!zpt
  \else
    \advance\!distacross by \!downlength
    \!getnextvalueof\!uplength\from\!!UDlist
    \advance\!distacross by \!uplength
    \!getnextvalueof\!downlength\from\!!UDlist
  \fi}
\def\inboundscheckoff{%
  \def\!plotifinbounds{\!plot(\!xpos,\!ypos)}%
  \def\!initinboundscheck{\relax}\ignorespaces}
\def\!!plotifinbounds{%
  \ifdim \!xpos<\!checkleft
  \else
    \ifdim \!xpos>\!checkright
    \else
      \ifdim \!ypos<\!checkbot
      \else
         \ifdim \!ypos>\!checktop
         \else
           \!plot(\!xpos,\!ypos)
         \fi 
      \fi
    \fi
  \fi}
\def\!!initinboundscheck{%
  \!checkleft=\!arealloc     \advance\!checkleft by \!xorigin
  \!checkright=\!arearloc    \advance\!checkright by \!xorigin
  \!checkbot=\!areabloc      \advance\!checkbot by \!yorigin
  \!checktop=\!areatloc      \advance\!checktop by \!yorigin}
\def\!logten#1#2{%
  \expandafter\!!logten#1\!nil
  \!removept\!dimenF#2%
  \ignorespaces}
\def\!!logten#1#2\!nil{%
  \if -#1%
    \!dimenF=\!zpt
    \def\!next{\ignorespaces}%
  \else
    \if +#1%
      \def\!next{\!!logten#2\!nil}%
    \else
      \if .#1%
        \def\!next{\!!logten0.#2\!nil}%
      \else
        \def\!next{\!!!logten#1#2..\!nil}%
      \fi
    \fi
  \fi
  \!next}
\def\!!!logten#1#2.#3.#4\!nil{%
  \!dimenF=1pt 
  \if 0#1%
    \!!logshift#3pt 
  \else 
    \!logshift#2/
    \!dimenE=#1.#2#3pt 
  \fi 
  \ifdim \!dimenE<\!rootten
    \multiply \!dimenE 10 
    \advance  \!dimenF -1pt
  \fi
  \!dimenG=\!dimenE
    \advance\!dimenG 10pt
  \advance\!dimenE -10pt 
  \multiply\!dimenE 10 
  \!divide\!dimenE\!dimenG\!dimenE
  \!removept\!dimenE\!t
  \!dimenG=\!t\!dimenE
  \!removept\!dimenG\!tt
  \!dimenH=\!tt\!tenAe
    \divide\!dimenH 100
  \advance\!dimenH \!tenAc
  \!dimenH=\!tt\!dimenH
    \divide\!dimenH 100   
  \advance\!dimenH \!tenAa
  \!dimenH=\!t\!dimenH
    \divide\!dimenH 100 
  \advance\!dimenF \!dimenH}
\def\!logshift#1{%
  \if #1/%
    \def\!next{\ignorespaces}%
  \else
    \advance\!dimenF 1pt 
    \def\!next{\!logshift}%
  \fi 
  \!next}
 \def\!!logshift#1{%
   \advance\!dimenF -1pt
   \if 0#1%
     \def\!next{\!!logshift}%
   \else
     \if p#1%
       \!dimenF=1pt
       \def\!next{\!dimenE=1p}%
     \else
       \def\!next{\!dimenE=#1.}%
     \fi
   \fi
   \!next}
\def\beginpicture{%
  \setbox\!picbox=\hbox\bgroup%
  \!xleft=\maxdimen  
  \!xright=-\maxdimen
  \!ybot=\maxdimen
  \!ytop=-\maxdimen}
\def\endpicture{%
  \ifdim\!xleft=\maxdimen
    \!xleft=\!zpt \!xright=\!zpt \!ybot=\!zpt \!ytop=\!zpt 
  \fi
  \global\!Xleft=\!xleft \global\!Xright=\!xright
  \global\!Ybot=\!ybot \global\!Ytop=\!ytop
  \egroup%
  \ht\!picbox=\!Ytop  \dp\!picbox=-\!Ybot
  \ifdim\!Ybot>\!zpt
  \else 
    \ifdim\!Ytop<\!zpt
      \!Ybot=\!Ytop
    \else
      \!Ybot=\!zpt
    \fi
  \fi
  \hbox{\kern-\!Xleft\lower\!Ybot\box\!picbox\kern\!Xright}}
\def\endpicturesave <#1,#2>{%
  \endpicture \global #1=\!Xleft \global #2=\!Ybot \ignorespaces}
\def\setcoordinatesystem{%
  \!ifnextchar{u}{\!getlengths }
    {\!getlengths units <\!xunit,\!yunit>}}
\def\!getlengths units <#1,#2>{%
  \!xunit=#1\relax
  \!yunit=#2\relax
  \!ifcoordmode 
    \let\!SCnext=\!SCccheckforRP
  \else
    \let\!SCnext=\!SCdcheckforRP
  \fi
  \!SCnext}
\def\!SCccheckforRP{%
  \!ifnextchar{p}{\!cgetreference }
    {\!cgetreference point at {\!xref} {\!yref} }}
\def\!cgetreference point at #1 #2 {%
  \edef\!xref{#1}\edef\!yref{#2}%
  \!xorigin=\!xref\!xunit  \!yorigin=\!yref\!yunit  
  \!initinboundscheck 
  \ignorespaces}
\def\!SCdcheckforRP{%
  \!ifnextchar{p}{\!dgetreference}%
    {\ignorespaces}}
\def\!dgetreference point at #1 #2 {%
  \!xorigin=#1\relax  \!yorigin=#2\relax
  \ignorespaces}
\long\def\put#1#2 at #3 #4 {%
  \!setputobject{#1}{#2}%
  \!xpos=\!M{#3}\!xunit  \!ypos=\!M{#4}\!yunit  
  \!rotateaboutpivot\!xpos\!ypos%
  \advance\!xpos -\!xorigin  \advance\!xpos -\!xshift
  \advance\!ypos -\!yorigin  \advance\!ypos -\!yshift
  \kern\!xpos\raise\!ypos\box\!putobject\kern-\!xpos%
  \!doaccounting\ignorespaces}
\long\def\multiput #1#2 at {%
  \!setputobject{#1}{#2}%
  \!ifnextchar"{\!putfromfile}{\!multiput}}
\def\!putfromfile"#1"{%
  \expandafter\!multiput \input #1 /}
\def\!multiput{%
  \futurelet\!nextchar\!!multiput}
\def\!!multiput{%
  \if *\!nextchar
    \def\!nextput{\!alsoby}%
  \else
    \if /\!nextchar
      \def\!nextput{\!finishmultiput}%
    \else
      \def\!nextput{\!alsoat}%
    \fi
  \fi
  \!nextput}
\def\!finishmultiput/{%
  \setbox\!putobject=\hbox{}%
  \ignorespaces}
\def\!alsoat#1 #2 {%
  \!xpos=\!M{#1}\!xunit  \!ypos=\!M{#2}\!yunit  
  \!rotateaboutpivot\!xpos\!ypos%
  \advance\!xpos -\!xorigin  \advance\!xpos -\!xshift
  \advance\!ypos -\!yorigin  \advance\!ypos -\!yshift
  \kern\!xpos\raise\!ypos\copy\!putobject\kern-\!xpos%
  \!doaccounting
  \!multiput}
\def\!alsoby*#1 #2 #3 {%
  \!dxpos=\!M{#2}\!xunit \!dypos=\!M{#3}\!yunit 
  \!rotateonly\!dxpos\!dypos
  \!ntemp=#1%
  \!!loop\ifnum\!ntemp>0
    \advance\!xpos by \!dxpos  \advance\!ypos by \!dypos
    \kern\!xpos\raise\!ypos\copy\!putobject\kern-\!xpos%
    \advance\!ntemp by -1
  \repeat
  \!doaccounting 
  \!multiput}
\def\accountingon{\def\!doaccounting{\!!doaccounting}\ignorespaces}
\def\!!doaccounting{%
  \!xtemp=\!xpos  
  \!ytemp=\!ypos
  \ifdim\!xtemp<\!xleft 
     \!xleft=\!xtemp 
  \fi
  \advance\!xtemp by  \!wd 
  \ifdim\!xright<\!xtemp 
    \!xright=\!xtemp
  \fi
  \advance\!ytemp by -\!dp
  \ifdim\!ytemp<\!ybot  
    \!ybot=\!ytemp
  \fi
  \advance\!ytemp by  \!dp
  \advance\!ytemp by  \!ht 
  \ifdim\!ytemp>\!ytop  
    \!ytop=\!ytemp  
  \fi}
\long\def\!setputobject#1#2{%
  \setbox\!putobject=\hbox{#1}%
  \!ht=\ht\!putobject  \!dp=\dp\!putobject  \!wd=\wd\!putobject
  \wd\!putobject=\!zpt
  \!xshift=.5\!wd   \!yshift=.5\!ht   \advance\!yshift by -.5\!dp
  \edef\!putorientation{#2}%
  \expandafter\!SPOreadA\!putorientation[]\!nil%
  \expandafter\!SPOreadB\!putorientation<\!zpt,\!zpt>\!nil\ignorespaces}
\def\!SPOreadA#1[#2]#3\!nil{\!etfor\!orientation:=#2\do\!SPOreviseshift}
\def\!SPOreadB#1<#2,#3>#4\!nil{\advance\!xshift by -#2\advance\!yshift by -#3}
\def\!SPOreviseshift{%
  \if l\!orientation 
    \!xshift=\!zpt
  \else 
    \if r\!orientation 
      \!xshift=\!wd
    \else 
      \if b\!orientation
        \!yshift=-\!dp
      \else 
        \if B\!orientation 
          \!yshift=\!zpt
        \else 
          \if t\!orientation 
            \!yshift=\!ht
          \fi 
        \fi
      \fi
    \fi
  \fi}
\long\def\!dimenput#1#2(#3,#4){%
  \!setputobject{#1}{#2}%
  \!xpos=#3\advance\!xpos by -\!xshift
  \!ypos=#4\advance\!ypos by -\!yshift
  \kern\!xpos\raise\!ypos\box\!putobject\kern-\!xpos%
  \!doaccounting\ignorespaces}
\def\!setdimenmode{%
  \let\!M=\!M!!\ignorespaces}
\def\!setcoordmode{%
  \let\!M=\!M!\ignorespaces}
\def\!ifcoordmode{%
  \ifx \!M \!M!}
\def\!ifdimenmode{%
  \ifx \!M \!M!!}
\def\!M!#1#2{#1#2} 
\def\!M!!#1#2{#1}
\let\setdimensionmode=\!setdimenmode
\let\setcoordinatemode=\!setcoordmode
\def\!stack[#1]{%
  \let\!lglue=\hfill \let\!rglue=\hfill
  \expandafter\let\csname !#1glue\endcsname=\relax
  \!ifnextchar<{\!!stack}{\!!stack<\stackleading>}}
\def\!!stack<#1>#2{%
  \vbox{\def\!valueslist{}\!ecfor\!value:=#2\do{%
    \expandafter\!rightappend\!value\withCS{\\}\to\!valueslist}%
    \!lop\!valueslist\to\!value
    \let\\=\cr\lineskiplimit=\maxdimen\lineskip=#1%
    \baselineskip=-1000pt\halign{\!lglue##\!rglue\cr \!value\!valueslist\cr}}%
  \ignorespaces}
\def\!lines[#1]#2{%
  \let\!lglue=\hfill \let\!rglue=\hfill
  \expandafter\let\csname !#1glue\endcsname=\relax
  \vbox{\halign{\!lglue##\!rglue\cr #2\crcr}}%
  \ignorespaces}
\def\!Lines[#1]#2{%
  \let\!lglue=\hfill \let\!rglue=\hfill
  \expandafter\let\csname !#1glue\endcsname=\relax
  \vtop{\halign{\!lglue##\!rglue\cr #2\crcr}}%
  \ignorespaces}
\def\setplotsymbol(#1#2){%
  \!setputobject{#1}{#2}
  \setbox\!plotsymbol=\box\!putobject%
  \!plotsymbolxshift=\!xshift 
  \!plotsymbolyshift=\!yshift 
  \ignorespaces}
\def\!!plot(#1,#2){%
  \!dimenA=-\!plotxorigin \advance \!dimenA by #1
  \!dimenB=-\!plotyorigin \advance \!dimenB by #2
  \kern\!dimenA\raise\!dimenB\copy\!plotsymbol\kern-\!dimenA%
  \ignorespaces}
\def\!!!plot(#1,#2){%
  \!dimenA=-\!plotxorigin \advance \!dimenA by #1
  \!dimenB=-\!plotyorigin \advance \!dimenB by #2
  \kern\!dimenA\raise\!dimenB\copy\!plotsymbol\kern-\!dimenA%
  \!countE=\!dimenA
  \!countF=\!dimenB
  \immediate\write\!replotfile{\the\!countE,\the\!countF.}%
  \ignorespaces}
\def\savelinesandcurves on "#1" {%
  \immediate\closeout\!replotfile
  \immediate\openout\!replotfile=#1%
  \let\!plot=\!!!plot}
\def\dontsavelinesandcurves {%
  \let\!plot=\!!plot}
\xdef\!Commentsignal{
\def\writesavefile#1 {%
  \immediate\write\!replotfile{\!Commentsignal #1}%
  \ignorespaces}

\def\replot"#1" {%
  \expandafter\!replot\input #1 /}
\def\!replot#1,#2. {%
  \!dimenA=#1sp
  \kern\!dimenA\raise#2sp\copy\!plotsymbol\kern-\!dimenA
  \futurelet\!nextchar\!!replot}
\def\!!replot{%
  \if /\!nextchar 
    \def\!next{\!finish}%
  \else
    \def\!next{\!replot}%
  \fi
  \!next}


 
 
\def\!Pythag#1#2#3{%
  \!dimenE=#1\relax                                     
  \ifdim\!dimenE<\!zpt 
    \!dimenE=-\!dimenE 
  \fi
  \!dimenF=#2\relax
  \ifdim\!dimenF<\!zpt 
    \!dimenF=-\!dimenF 
  \fi
  \advance \!dimenF by \!dimenE
  \ifdim\!dimenF=\!zpt 
    \!dimenG=\!zpt
  \else 
    \!divide{8\!dimenE}\!dimenF\!dimenE
    \advance\!dimenE by -4pt
      \!dimenE=2\!dimenE
    \!removept\!dimenE\!!t
    \!dimenE=\!!t\!dimenE
    \advance\!dimenE by 64pt
    \divide \!dimenE by 2
    \!dimenH=7pt
    \!!Pythag\!!Pythag\!!Pythag
    \!removept\!dimenH\!!t
    \!dimenG=\!!t\!dimenF
    \divide\!dimenG by 8
  \fi
  #3=\!dimenG
  \ignorespaces}

\def\!!Pythag{
  \!divide\!dimenE\!dimenH\!dimenI
  \advance\!dimenH by \!dimenI
    \divide\!dimenH by 2}

\def\placehypotenuse for <#1> and <#2> in <#3> {%
  \!Pythag{#1}{#2}{#3}}

 
 
 
\def\!qjoin (#1,#2) (#3,#4){%
  \advance\!intervalno by 1
  \!ifcoordmode
    \edef\!xmidpt{#1}\edef\!ymidpt{#2}%
  \else
    \!dimenA=#1\relax \edef\!xmidpt{\the\!dimenA}%
    \!dimenA=#2\relax \edef\!ymidpt{\the\!dimenA}%
  \fi
  \!xM=\!M{#1}\!xunit  \!yM=\!M{#2}\!yunit   \!rotateaboutpivot\!xM\!yM
  \!xE=\!M{#3}\!xunit  \!yE=\!M{#4}\!yunit   \!rotateaboutpivot\!xE\!yE
%
  \!dimenA=\!xM  \advance \!dimenA by -\!xS
  \!dimenB=\!xE  \advance \!dimenB by -\!xM
  \!xB=3\!dimenA \advance \!xB by -\!dimenB
  \!xC=2\!dimenB \advance \!xC by -2\!dimenA
%
  \!dimenA=\!yM  \advance \!dimenA by -\!yS%
  \!dimenB=\!yE  \advance \!dimenB by -\!yM%
  \!yB=3\!dimenA \advance \!yB by -\!dimenB%
  \!yC=2\!dimenB \advance \!yC by -2\!dimenA%
%
  \!xprime=\!xB  \!yprime=\!yB
  \!dxprime=.5\!xC  \!dyprime=.5\!yC
  \!getf \!midarclength=\!dimenA
  \!getf \advance \!midarclength by 4\!dimenA
  \!getf \advance \!midarclength by \!dimenA
  \divide \!midarclength by 12
%
  \!arclength=\!dimenA
  \!getf \advance \!arclength by 4\!dimenA
  \!getf \advance \!arclength by \!dimenA
  \divide \!arclength by 12
  \advance \!arclength by \!midarclength
  \global\advance \totalarclength by \!arclength
%
%
  \ifdim\!distacross>\!arclength 
    \advance \!distacross by -\!arclength
  \else
    \!initinverseinterp
    \loop\ifdim\!distacross<\!arclength
      \!inverseinterp
      \!xpos=\!t\!xC \advance\!xpos by \!xB
        \!xpos=\!t\!xpos \advance \!xpos by \!xS
      \!ypos=\!t\!yC \advance\!ypos by \!yB
        \!ypos=\!t\!ypos \advance \!ypos by \!yS
      \!plotifinbounds
      \advance\!distacross \plotsymbolspacing
      \!advancedashing
    \repeat  
    \advance \!distacross by -\!arclength
  \fi
  \!xS=\!xE
  \!yS=\!yE
  \ignorespaces}

\def\!getf{\!Pythag\!xprime\!yprime\!dimenA%
  \advance\!xprime by \!dxprime
  \advance\!yprime by \!dyprime}

\def\!initinverseinterp{%
  \ifdim\!arclength>\!zpt
    \!divide{8\!midarclength}\!arclength\!dimenE
    \ifdim\!dimenE<\!wmin \!setinverselinear
    \else 
      \ifdim\!dimenE>\!wmax \!setinverselinear
      \else
        \def\!inverseinterp{\!inversequad}\ignorespaces
%
%
         \!removept\!dimenE\!Ew
         \!dimenF=-\!Ew\!dimenE
         \advance\!dimenF by 32pt
         \!dimenG=8pt 
         \advance\!dimenG by -\!dimenE
         \!dimenG=\!Ew\!dimenG
         \!divide\!dimenF\!dimenG\!beta
         \!gamma=1pt
         \advance \!gamma by -\!beta
      \fi
    \fi
  \fi
  \ignorespaces}

\def\!inversequad{%
  \!divide\!distacross\!arclength\!dimenG
  \!removept\!dimenG\!v
  \!dimenG=\!v\!gamma
  \advance\!dimenG by \!beta
  \!dimenG=\!v\!dimenG
  \!removept\!dimenG\!t}

\def\!setinverselinear{%
  \def\!inverseinterp{\!inverselinear}%
  \divide\!dimenE by 8 \!removept\!dimenE\!t
  \!countC=\!intervalno \multiply \!countC 2
  \!countB=\!countC     \advance \!countB -1
  \!countA=\!countB     \advance \!countA -1
  \wlog{\the\!countB th point (\!xmidpt,\!ymidpt) being plotted 
    doesn't lie in the}%
  \wlog{ middle third of the arc between the \the\!countA th 
    and \the\!countC th points:}%
  \wlog{ [arc length \the\!countA\space to \the\!countB]/[arc length 
    \the \!countA\space to \the\!countC]=\!t.}%
  \ignorespaces}
 
\def\!inverselinear{%
  \!divide\!distacross\!arclength\!dimenG
  \!removept\!dimenG\!t}

 

\def\startrotation{%
  \let\!rotateaboutpivot=\!!rotateaboutpivot
  \let\!rotateonly=\!!rotateonly
  \!ifnextchar{b}{\!getsincos }%
    {\!getsincos by {\!cosrotationangle} {\!sinrotationangle} }}
\def\!getsincos by #1 #2 {%
  \edef\!cosrotationangle{#1}%
  \edef\!sinrotationangle{#2}%
  \!ifcoordmode 
    \let\!ROnext=\!ccheckforpivot
  \else
    \let\!ROnext=\!dcheckforpivot
  \fi
  \!ROnext}
\def\!ccheckforpivot{%
  \!ifnextchar{a}{\!cgetpivot}%
    {\!cgetpivot about {\!xpivotcoord} {\!ypivotcoord} }}
\def\!cgetpivot about #1 #2 {%
  \edef\!xpivotcoord{#1}%
  \edef\!ypivotcoord{#2}%
  \!xpivot=#1\!xunit  \!ypivot=#2\!yunit
  \ignorespaces}
\def\!dcheckforpivot{%
  \!ifnextchar{a}{\!dgetpivot}{\ignorespaces}}
\def\!dgetpivot about #1 #2 {%
  \!xpivot=#1\relax  \!ypivot=#2\relax
  \ignorespaces}

\def\stoprotation{%
  \let\!rotateaboutpivot=\!!!rotateaboutpivot
  \let\!rotateonly=\!!!rotateonly
  \ignorespaces}
 
\def\!!rotateaboutpivot#1#2{%
  \!dimenA=#1\relax  \advance\!dimenA -\!xpivot
  \!dimenB=#2\relax  \advance\!dimenB -\!ypivot
  \!dimenC=\!cosrotationangle\!dimenA
    \advance \!dimenC -\!sinrotationangle\!dimenB
  \!dimenD=\!cosrotationangle\!dimenB
    \advance \!dimenD  \!sinrotationangle\!dimenA
  \advance\!dimenC \!xpivot  \advance\!dimenD \!ypivot
  #1=\!dimenC  #2=\!dimenD
  \ignorespaces}

\def\!!rotateonly#1#2{%
  \!dimenA=#1\relax  \!dimenB=#2\relax 
  \!dimenC=\!cosrotationangle\!dimenA
    \advance \!dimenC -\!rotsign\!sinrotationangle\!dimenB
  \!dimenD=\!cosrotationangle\!dimenB
    \advance \!dimenD  \!rotsign\!sinrotationangle\!dimenA
  #1=\!dimenC  #2=\!dimenD
  \ignorespaces}
\def\!rotsign{}
\def\!!!rotateaboutpivot#1#2{\relax}
\def\!!!rotateonly#1#2{\relax}
\stoprotation

\def\!reverserotateonly#1#2{%
  \def\!rotsign{-}%
  \!rotateonly{#1}{#2}%
  \def\!rotsign{}%
  \ignorespaces}

\def\!getspan span <#1>{%
  \!dshade=#1\relax
  \!ifcoordmode 
    \let\!GRnext=\!GRccheckforAP
  \else
    \let\!GRnext=\!GRdcheckforAP
  \fi
  \!GRnext}
\def\!GRccheckforAP{%
  \!ifnextchar{p}{\!cgetanchor }
    {\!cgetanchor point at {\!xshadesave} {\!yshadesave} }}
\def\!cgetanchor point at #1 #2 {%
  \edef\!xshadesave{#1}\edef\!yshadesave{#2}%
  \!xshade=\!xshadesave\!xunit  \!yshade=\!yshadesave\!yunit
  \ignorespaces}
\def\!GRdcheckforAP{%
  \!ifnextchar{p}{\!dgetanchor}%
    {\ignorespaces}}
\def\!dgetanchor point at #1 #2 {%
  \!xshade=#1\relax  \!yshade=#2\relax
  \ignorespaces}

\def\setshadesymbol{%
  \!ifnextchar<{\!setshadesymbol}{\!setshadesymbol<,,,> }}

\def\!setshadesymbol <#1,#2,#3,#4> (#5#6){%
  \!setputobject{#5}{#6}%
  \setbox\!shadesymbol=\box\!putobject%
  \!shadesymbolxshift=\!xshift \!shadesymbolyshift=\!yshift
%
  \!dimenA=\!xshift \advance\!dimenA \!smidge
  \!override\!dimenA{#1}\!lshrinkage%
  \!dimenA=\!wd \advance \!dimenA -\!xshift
    \advance\!dimenA \!smidge
    \!override\!dimenA{#2}\!rshrinkage
  \!dimenA=\!dp \advance \!dimenA \!yshift
    \advance\!dimenA \!smidge
    \!override\!dimenA{#3}\!bshrinkage
  \!dimenA=\!ht \advance \!dimenA -\!yshift
    \advance\!dimenA \!smidge
    \!override\!dimenA{#4}\!tshrinkage
  \ignorespaces}
\def\!smidge{-.2pt}%

\def\!override#1#2#3{%
  \edef\!!override{#2}%
  \ifx \!!override\empty
    #3=#1\relax
  \else
    \if z\!!override
      #3=\!zpt
    \else
      \ifx \!!override\!blankz
        #3=\!zpt
      \else
        #3=#2\relax
      \fi
    \fi
  \fi
  \ignorespaces}
\def\!blankz{ z}

\setshadesymbol ({\fiverm .})

\def\!startvshade#1(#2,#3,#4){%
  \let\!!xunit=\!xunit%
  \let\!!yunit=\!yunit%
  \let\!!xshade=\!xshade%
  \let\!!yshade=\!yshade%
  \def\!getshrinkages{\!vgetshrinkages}%
  \let\!setshadelocation=\!vsetshadelocation%
  \!xS=\!M{#2}\!!xunit
  \!ybS=\!M{#3}\!!yunit
  \!ytS=\!M{#4}\!!yunit
  \!shadexorigin=\!xorigin  \advance \!shadexorigin \!shadesymbolxshift
  \!shadeyorigin=\!yorigin  \advance \!shadeyorigin \!shadesymbolyshift
  \ignorespaces}
 
\def\!starthshade#1(#2,#3,#4){%
  \let\!!xunit=\!yunit%
  \let\!!yunit=\!xunit%
  \let\!!xshade=\!yshade%
  \let\!!yshade=\!xshade%
  \def\!getshrinkages{\!hgetshrinkages}%
  \let\!setshadelocation=\!hsetshadelocation%
  \!xS=\!M{#2}\!!xunit
  \!ybS=\!M{#3}\!!yunit
  \!ytS=\!M{#4}\!!yunit
  \!shadexorigin=\!xorigin  \advance \!shadexorigin \!shadesymbolxshift
  \!shadeyorigin=\!yorigin  \advance \!shadeyorigin \!shadesymbolyshift
  \ignorespaces}

\def\!lattice#1#2#3#4#5{%
  \!dimenA=#1
  \!dimenB=#2
  \!countB=\!dimenB
%
  \!dimenC=#3
  \advance\!dimenC -\!dimenA
  \!countA=\!dimenC
  \divide\!countA \!countB
  \ifdim\!dimenC>\!zpt
    \!dimenD=\!countA\!dimenB
    \ifdim\!dimenD<\!dimenC
      \advance\!countA 1 
    \fi
  \fi
  \!dimenC=\!countA\!dimenB
    \advance\!dimenC \!dimenA
  #4=\!countA
  #5=\!dimenC
  \ignorespaces}

\def\!qshade#1(#2,#3,#4)#5(#6,#7,#8){%
  \!xM=\!M{#2}\!!xunit
  \!ybM=\!M{#3}\!!yunit
  \!ytM=\!M{#4}\!!yunit
  \!xE=\!M{#6}\!!xunit
  \!ybE=\!M{#7}\!!yunit
  \!ytE=\!M{#8}\!!yunit
  \!getcoeffs\!xS\!ybS\!xM\!ybM\!xE\!ybE\!ybB\!ybC
  \!getcoeffs\!xS\!ytS\!xM\!ytM\!xE\!ytE\!ytB\!ytC
  \def\!getylimits{\!qgetylimits}%
  \!shade{#1}\ignorespaces}
 
\def\!lshade#1(#2,#3,#4){%
  \!xE=\!M{#2}\!!xunit
  \!ybE=\!M{#3}\!!yunit
  \!ytE=\!M{#4}\!!yunit
  \!dimenE=\!xE  \advance \!dimenE -\!xS
  \!dimenC=\!ytE \advance \!dimenC -\!ytS
  \!divide\!dimenC\!dimenE\!ytB
  \!dimenC=\!ybE \advance \!dimenC -\!ybS
  \!divide\!dimenC\!dimenE\!ybB
  \def\!getylimits{\!lgetylimits}%
  \!shade{#1}\ignorespaces}
 
\def\!getcoeffs#1#2#3#4#5#6#7#8{%
  \!dimenC=#4\advance \!dimenC -#2
  \!dimenE=#3\advance \!dimenE -#1
  \!divide\!dimenC\!dimenE\!dimenF
  \!dimenC=#6\advance \!dimenC -#4
  \!dimenH=#5\advance \!dimenH -#3
  \!divide\!dimenC\!dimenH\!dimenG
  \advance\!dimenG -\!dimenF
  \advance \!dimenH \!dimenE
  \!divide\!dimenG\!dimenH#8
  \!removept#8\!t
  #7=-\!t\!dimenE
  \advance #7\!dimenF
  \ignorespaces}

\def\!shade#1{%
  \!getshrinkages#1<,,,>\!nil
  \advance \!dimenE \!xS
  \!lattice\!!xshade\!dshade\!dimenE
    \!parity\!xpos
  \!dimenF=-\!dimenF
    \advance\!dimenF \!xE
  \!loop\!not{\ifdim\!xpos>\!dimenF}
    \!shadecolumn%
    \advance\!xpos \!dshade
    \advance\!parity 1
  \repeat
  \!xS=\!xE
  \!ybS=\!ybE
  \!ytS=\!ytE
  \ignorespaces}

\def\!vgetshrinkages#1<#2,#3,#4,#5>#6\!nil{%
  \!override\!lshrinkage{#2}\!dimenE
  \!override\!rshrinkage{#3}\!dimenF
  \!override\!bshrinkage{#4}\!dimenG
  \!override\!tshrinkage{#5}\!dimenH
  \ignorespaces}
\def\!hgetshrinkages#1<#2,#3,#4,#5>#6\!nil{%
  \!override\!lshrinkage{#2}\!dimenG
  \!override\!rshrinkage{#3}\!dimenH
  \!override\!bshrinkage{#4}\!dimenE
  \!override\!tshrinkage{#5}\!dimenF
  \ignorespaces}

\def\!shadecolumn{%
  \!dxpos=\!xpos
  \advance\!dxpos -\!xS
  \!removept\!dxpos\!dx
  \!getylimits
  \advance\!ytpos -\!dimenH
  \advance\!ybpos \!dimenG
  \!yloc=\!!yshade
  \ifodd\!parity 
     \advance\!yloc \!dshade
  \fi
  \!lattice\!yloc{2\!dshade}\!ybpos%
    \!countA\!ypos
  \!dimenA=-\!shadexorigin \advance \!dimenA \!xpos
  \loop\!not{\ifdim\!ypos>\!ytpos}
    \!setshadelocation
    \!rotateaboutpivot\!xloc\!yloc%
    \!dimenA=-\!shadexorigin \advance \!dimenA \!xloc
    \!dimenB=-\!shadeyorigin \advance \!dimenB \!yloc
    \kern\!dimenA \raise\!dimenB\copy\!shadesymbol \kern-\!dimenA
    \advance\!ypos 2\!dshade
  \repeat
  \ignorespaces}
 
\def\!qgetylimits{%
  \!dimenA=\!dx\!ytC              
  \advance\!dimenA \!ytB
  \!ytpos=\!dx\!dimenA
  \advance\!ytpos \!ytS
  \!dimenA=\!dx\!ybC              
  \advance\!dimenA \!ybB
  \!ybpos=\!dx\!dimenA
  \advance\!ybpos \!ybS}
 
\def\!lgetylimits{%
  \!ytpos=\!dx\!ytB
  \advance\!ytpos \!ytS
  \!ybpos=\!dx\!ybB
  \advance\!ybpos \!ybS}
 
\def\!vsetshadelocation{
  \!xloc=\!xpos
  \!yloc=\!ypos}
\def\!hsetshadelocation{
  \!xloc=\!ypos
  \!yloc=\!xpos}





\def\!axisticks {%
  \def\!nextkeyword##1 {%
    \expandafter\ifx\csname !ticks##1\endcsname \relax
      \def\!next{\!fixkeyword{##1}}%
    \else
      \def\!next{\csname !ticks##1\endcsname}%
    \fi
    \!next}%
  \!axissetup
    \def\!axissetup{\relax}%
  \edef\!ticksinoutsign{\!ticksinoutSign}%
  \!ticklength=\longticklength
  \!tickwidth=\linethickness
  \!gridlinestatus
  \!setticktransform
  \!maketick
  \!tickcase=0
  \def\!LTlist{}%
  \!nextkeyword}

\def\ticksout{%
  \def\!ticksinoutSign{+}}

\ticksout

\def\nogridlines{%
  \def\!gridlinestatus{\!gridlinestoofalse}}
\nogridlines

\def\loggedticks{%
  \def\!setticktransform{\let\!ticktransform=\!logten}}
\def\unloggedticks{%
  \def\!setticktransform{\let\!ticktransform=\!donothing}}
\def\!donothing#1#2{\def#2{#1}}
\unloggedticks

\expandafter\def\csname !ticks/\endcsname{%
  \!not {\ifx \!LTlist\empty}
    \!placetickvalues
  \fi
  \def\!tickvalueslist{}%
  \def\!LTlist{}%
  \expandafter\csname !axis/\endcsname}

\def\!maketick{%
  \setbox\!boxA=\hbox{%
    \beginpicture
      \!setdimenmode
      \setcoordinatesystem point at {\!zpt} {\!zpt}   
      \linethickness=\!tickwidth
      \ifdim\!ticklength>\!zpt
        \putrule from {\!zpt} {\!zpt} to
          {\!ticksinoutsign\!tickxsign\!ticklength}
          {\!ticksinoutsign\!tickysign\!ticklength}
      \fi
      \if!gridlinestoo
        \putrule from {\!zpt} {\!zpt} to
          {-\!tickxsign\!xaxislength} {-\!tickysign\!yaxislength}
      \fi
    \endpicturesave <\!Xsave,\!Ysave>}%
    \wd\!boxA=\!zpt}
  
\def\!ticksin{%
  \def\!ticksinoutsign{-}%
  \!maketick
  \!nextkeyword}

\def\!ticksout{%
  \def\!ticksinoutsign{+}%
  \!maketick
  \!nextkeyword}

\def\!tickslength<#1> {%
  \!ticklength=#1\relax
  \!maketick
  \!nextkeyword}

\def\!tickslong{%
  \!tickslength<\longticklength> }

\def\!ticksshort{%
  \!tickslength<\shortticklength> }

\def\!tickswidth<#1> {%
  \!tickwidth=#1\relax
  \!maketick
  \!nextkeyword}

\def\!ticksandacross{%
  \!gridlinestootrue
  \!maketick
  \!nextkeyword}

\def\!ticksbutnotacross{%
  \!gridlinestoofalse
  \!maketick
  \!nextkeyword}

\def\!tickslogged{%
  \let\!ticktransform=\!logten
  \!nextkeyword}

\def\!ticksunlogged{%
  \let\!ticktransform=\!donothing
  \!nextkeyword}

\def\!ticksunlabeled{%
  \!tickcase=0
  \!nextkeyword}

\def\!ticksnumbered{%
  \!tickcase=1
  \!nextkeyword}

\def\!tickswithvalues#1/ {%
  \edef\!tickvalueslist{#1! /}%
  \!tickcase=2
  \!nextkeyword}

\def\!ticksquantity#1 {%
  \ifnum #1>1
    \!updatetickoffset
    \!countA=#1\relax
    \advance \!countA -1
    \!ticklocationincr=\!axisLength
      \divide \!ticklocationincr \!countA
    \!ticklocation=\!axisstart
    \loop \!not{\ifdim \!ticklocation>\!axisend}
      \!placetick\!ticklocation
      \ifcase\!tickcase
          \relax 
        \or
          \relax 
        \or
          \expandafter\!gettickvaluefrom\!tickvalueslist
          \edef\!tickfield{{\the\!ticklocation}{\!value}}%
          \expandafter\!listaddon\expandafter{\!tickfield}\!LTlist%
      \fi
      \advance \!ticklocation \!ticklocationincr
    \repeat
  \fi
  \!nextkeyword}

\def\!ticksat#1 {%
  \!updatetickoffset
  \edef\!Loc{#1}%
  \if /\!Loc
    \def\next{\!nextkeyword}%
  \else
    \!ticksincommon
    \def\next{\!ticksat}%
  \fi
  \next}    
      
\def\!ticksfrom#1 to #2 by #3 {%
  \!updatetickoffset
  \edef\!arg{#3}%
  \expandafter\!separate\!arg\!nil
  \!scalefactor=1
  \expandafter\!countfigures\!arg/
  \edef\!arg{#1}%
  \!scaleup\!arg by\!scalefactor to\!countE
  \edef\!arg{#2}%
  \!scaleup\!arg by\!scalefactor to\!countF
  \edef\!arg{#3}%
  \!scaleup\!arg by\!scalefactor to\!countG
  \loop \!not{\ifnum\!countE>\!countF}
    \ifnum\!scalefactor=1
      \edef\!Loc{\the\!countE}%
    \else
      \!scaledown\!countE by\!scalefactor to\!Loc
    \fi
    \!ticksincommon
    \advance \!countE \!countG
  \repeat
  \!nextkeyword}

\def\!updatetickoffset{%
  \!dimenA=\!ticksinoutsign\!ticklength
  \ifdim \!dimenA>\!offset
    \!offset=\!dimenA
  \fi}

\def\!placetick#1{%
  \if!xswitch
    \!xpos=#1\relax
    \!ypos=\!axisylevel
  \else
    \!xpos=\!axisxlevel
    \!ypos=#1\relax
  \fi
  \advance\!xpos \!Xsave
  \advance\!ypos \!Ysave
  \kern\!xpos\raise\!ypos\copy\!boxA\kern-\!xpos
  \ignorespaces}

\def\!gettickvaluefrom#1 #2 /{%
  \edef\!value{#1}%
  \edef\!tickvalueslist{#2 /}%
  \ifx \!tickvalueslist\!endtickvaluelist
    \!tickcase=0
  \fi}
\def\!endtickvaluelist{! /}

\def\!ticksincommon{%
  \!ticktransform\!Loc\!t
  \!ticklocation=\!t\!!unit
  \advance\!ticklocation -\!!origin
  \!placetick\!ticklocation
  \ifcase\!tickcase
    \relax 
  \or 
    \ifdim\!ticklocation<-\!!origin
      \edef\!Loc{$\!Loc$}%
    \fi
    \edef\!tickfield{{\the\!ticklocation}{\!Loc}}%
    \expandafter\!listaddon\expandafter{\!tickfield}\!LTlist%
  \or 
    \expandafter\!gettickvaluefrom\!tickvalueslist
    \edef\!tickfield{{\the\!ticklocation}{\!value}}%
    \expandafter\!listaddon\expandafter{\!tickfield}\!LTlist%
  \fi}

\def\!separate#1\!nil{%
  \!ifnextchar{-}{\!!separate}{\!!!separate}#1\!nil}
\def\!!separate-#1\!nil{%
  \def\!sign{-}%
  \!!!!separate#1..\!nil}
\def\!!!separate#1\!nil{%
  \def\!sign{+}%
  \!!!!separate#1..\!nil}
\def\!!!!separate#1.#2.#3\!nil{%
  \def\!arg{#1}%
  \ifx\!arg\!empty
    \!countA=0
  \else
    \!countA=\!arg
  \fi
  \def\!arg{#2}%
  \ifx\!arg\!empty
    \!countB=0
  \else
    \!countB=\!arg
  \fi}
 
\def\!countfigures#1{%
  \if #1/%
    \def\!next{\ignorespaces}%
  \else
    \multiply\!scalefactor 10
    \def\!next{\!countfigures}%
  \fi
  \!next}

\def\!scaleup#1by#2to#3{%
  \expandafter\!separate#1\!nil
  \multiply\!countA #2\relax
  \advance\!countA \!countB
  \if -\!sign
    \!countA=-\!countA
  \fi
  #3=\!countA
  \ignorespaces}

\def\!scaledown#1by#2to#3{%
  \!countA=#1\relax
  \ifnum \!countA<0 
    \def\!sign{-}
    \!countA=-\!countA
  \else
    \def\!sign{}%
  \fi
  \!countB=\!countA
  \divide\!countB #2\relax
  \!countC=\!countB
    \multiply\!countC #2\relax
  \advance \!countA -\!countC
  \edef#3{\!sign\the\!countB.}
  \!countC=\!countA 
  \ifnum\!countC=0 
    \!countC=1
  \fi
  \multiply\!countC 10
  \!loop \ifnum #2>\!countC
    \edef#3{#3\!zero}%
    \multiply\!countC 10
  \repeat
  \edef#3{#3\the\!countA}
  \ignorespaces}

\def\!placetickvalues{%
  \advance\!offset \tickstovaluesleading
  \if!xswitch
    \setbox\!boxA=\hbox{%
      \def\\##1##2{%
        \!dimenput {##2} [B] (##1,\!axisylevel)}%
      \beginpicture 
        \!LTlist
      \endpicturesave <\!Xsave,\!Ysave>}%
    \!dimenA=\!axisylevel
      \advance\!dimenA -\!Ysave
      \advance\!dimenA \!tickysign\!offset
      \if -\!tickysign
        \advance\!dimenA -\ht\!boxA
      \else
        \advance\!dimenA  \dp\!boxA
      \fi
    \advance\!offset \ht\!boxA 
      \advance\!offset \dp\!boxA
    \!dimenput {\box\!boxA} [Bl] <\!Xsave,\!Ysave> (\!zpt,\!dimenA)
  \else
    \setbox\!boxA=\hbox{%
      \def\\##1##2{%
        \!dimenput {##2} [r] (\!axisxlevel,##1)}%
      \beginpicture 
        \!LTlist
      \endpicturesave <\!Xsave,\!Ysave>}%
    \!dimenA=\!axisxlevel
      \advance\!dimenA -\!Xsave
      \advance\!dimenA \!tickxsign\!offset
      \if -\!tickxsign
        \advance\!dimenA -\wd\!boxA
      \fi
    \advance\!offset \wd\!boxA
    \!dimenput {\box\!boxA} [Bl] <\!Xsave,\!Ysave> (\!dimenA,\!zpt)
  \fi}

\normalgraphs
\catcode`!=12 


 
\catcode`@=11 \catcode`!=11
  
\let\!pictexendpicture=\endpicture 
\let\!pictexframe=\frame
\let\!pictexlinethickness=\linethickness
\let\!pictexmultiput=\multiput
\let\!pictexput=\put

\def\beginpicture{%
  \setbox\!picbox=\hbox\bgroup%
  \let\endpicture=\!pictexendpicture
  \let\frame=\!pictexframe
  \let\linethickness=\!pictexlinethickness
  \let\multiput=\!pictexmultiput
  \let\put=\!pictexput
  \let\input=\@@input   
  \!xleft=\maxdimen  
  \!xright=-\maxdimen
  \!ybot=\maxdimen
  \!ytop=-\maxdimen}

\let\frame=\!latexframe

\let\pictexframe=\!pictexframe

\let\linethickness=\!latexlinethickness
\let\pictexlinethickness=\!pictexlinethickness

\let\\=\@normalcr
\catcode`@=12 \catcode`!=12

\newtheorem{thm}{Theorem}[section]
\newtheorem{lem}[thm]{Lemma}

\newtheorem{prop}[thm]{Proposition}
\newtheorem{exmp}[thm]{Example}

\newtheorem{algo}[thm]{Algorithm}
\newtheorem{rmk}[thm]{Remark}

\newtheorem{thm-con}[thm]{Theorem-Conjecture}
\numberwithin{equation}{section}

\theoremstyle{definition}

\allowdisplaybreaks

\newcommand{\f}{\Bbb F}


\begin{document}

\title{On Gauss Periods}

\author[Xiang-dong Hou]{Xiang-dong Hou}
\address{Department of Mathematics and Statistics,
University of South Florida, Tampa, FL 33620}
\email{xhou@usf.edu}

\keywords{cyclotomic number, Gauss period, finite field, normal basis}

\subjclass[2000]{}

\begin{abstract}
Let $q$ be a prime power, and let $r=nk+1$ be a prime such that $r\nmid q$, where $n$ and $k$ are positive integers. Under a simple condition on $q$, $r$ and $k$, a Gauss period of type $(n,k)$ is a normal element of $\f_{q^n}$ over $\f_q$; the complexity of the resulting normal basis of $\f_{q^n}$ over $\f_q$ is denoted by $C(n,k;q)$. Recent works determined $C(n,k;q)$ for $k\le 7$ and all qualified $n$ and $q$. In this paper, we show that for any given $k>0$, $C(n,k;q)$ is given by an explicit formula except for finitely many primes $r=nk+1$ and the exceptional primes are easily determined. Moreover, we describe an algorithm that allows one to compute $C(n,k;q)$ for the exceptional primes $r=nk+1$. The numerical results of the paper cover $C(n,k;q)$ for $k\le 20$ and all qualified $n$ and $q$.
\end{abstract}

\maketitle

\section{Introduction and Preliminaries}

We briefly recall the existing results on Gauss periods, some of which are quite recent.

Let $q$ be a prime power, and let $r=nk+1$ be a prime such that $r\nmid q$, where $n$ and $k$ are positive integers. Since $k\mid r-1$, there is a unique cyclic subgroup $K$ of $\Bbb Z_r^*$ with $|K|=k$. Since $q^{nk}-1=q^r-1\equiv 0\pmod r$, there exists $\gamma\in\f_{q^{nk}}^*$ such that $o(\gamma)=r$. Let 
\begin{equation}\label{1.1}
\alpha_i=\sum_{a\in K}\gamma^{q^ia},\quad 0\le i\le n-1,
\end{equation}
which are called {\em Gauss periods} of type $(n,k)$ over $\f_q$. Note that $\alpha_i=\alpha_0^{q^i}$. Since $(q^n)^k\equiv 1\pmod r$, we have $q^n\in K$. Therefore
\[
\alpha_i^{q^n}=\sum_{a\in K}\gamma^{q^iq^na}=\sum_{a\in K}\gamma^{q^ia}=\alpha_i,
\]
and hence $\alpha_i\in\f_{q^n}$.

A necessary condition for $\alpha_0,\dots,\alpha_{n-1}$ to form a normal basis of $\f_{q^n}$ over $\f_q$ is that these elements be distinct. This condition requires $q^iK$, $0\le i\le n-1$, to be distinct cosets of $K$ in $\Bbb Z_r^*$, i.e., $\langle q,K\rangle=\Bbb Z_r^*$, which happens if and only if $\text{gcd}(nk/e,n)=1$, where $e=e(q,r)$ is the order of $q$ in $\Bbb Z_r^*$. 

On the other hand, if $\langle q,K\rangle=\Bbb Z_r^*$, then $\alpha_0,\dots,\alpha_{n-1}$ indeed form a normal basis of $\f_{q^n}$ over $\f_q$; the reason is the following \cite{Gao-thesis-1993, Wassermann-BMS-1990}. Assume that $\sum_{i=0}^{n-1}c_i\alpha_i=0$, where $c_i\in\f_q$, i.e.,
\[
\sum_{i=0}^{n-1}\sum_{a\in q^iK}c_i\gamma^a=0.
\]
Treat each $a\in\Bbb Z_r^*$ as an interger in $\{1,\dots,r-1\}$, and let
\[
f(X)=\sum_{i=0}^{n-1}\sum_{a\in q^iK}c_iX^a\in\f_q[X].
\]
For each $m\in\Bbb Z_r^*$, we have $m=q^ja_0$ for some $0\le j\le n-1$ and $a_0\in K$. Thus
\[
f(\gamma^m)=\sum_{i=0}^{n-1}\sum_{a\in q^iK}c_i\gamma^{q^ja_0a}=\Bigl(\sum_{i=0}^{n-1}\sum_{a\in q^iK}c_i\gamma^a\Bigr)^{q^j}=f(\gamma)^{q^j}=0.
\]
Also note that $f(0)=0$. Therefore $f(x)=0$ for all $x\in\{0\}\cup\langle\gamma\rangle$. Since $\deg f\le r-1$, we must have $f=0$, i.e., $c_i=0$ for all $0\le i\le n-1$.

From now one, we assume that $\langle q,K\rangle=\Bbb Z_r^*$ and hence $\alpha_0,\dots,\alpha_{n-1}$ form a normal basis of $\f_{q^n}$ over $\f_q$. To avoid triviality, we also assume that $n>1$, so $nk$ is even. For $0\le i\le n-1$,
\begin{align}\label{1.2}
\alpha_0\alpha_i\,&=\sum_{a,b\in K}\gamma^{a+q^ib}=\sum_{a,b\in K}\gamma^{a(1+q^ib)}\cr
&=k|\{-q^{-i}\}\cap K|+\sum_{j=0}^{n-1}\alpha_j|(1+q^iK)\cap q^jK|.
\end{align}
In the above, 
\begin{equation}\label{1.3}
t_{ij}=|(1+q^iK)\cap q^jK|,\quad 0\le i,j\le n-1,
\end{equation}
are called the {\em cyclotomic numbers}. Note that
\[
q^{-i}\in -K\Leftrightarrow
\begin{cases}
q^{-i}\in K\Leftrightarrow i=0&\text{if $k$ is even},\cr
q^{-2i}\in K\ \text{but}\ q^{-i}\notin K\Leftrightarrow i=n/2&\text{if $k$ is odd}.
\end{cases}
\]
Hence $|\{-q^{-i}\}\cap K|=\delta_i$, where
\begin{equation}\label{1.4.0}
\delta_i=
\begin{cases}
1&\text{if $k$ is even and $i=0$, or $k$ is odd and $i=n/2$},\cr
0&\text{otherwise}.
\end{cases}
\end{equation}
Now \eqref{1.2} becomes
\begin{equation}\label{1.4}
\alpha_0\alpha_i=k\delta_i+\sum_{j=0}^{n-1}t_{ij}\alpha_j=\sum_{j=0}^{n-1}(t_{ij}-k\delta_i)\alpha_j,\quad 0\le i\le n-1.
\end{equation}
(The last step in the above follows from the fact that $\sum_{j=0}^{n-1}\alpha_j=\sum_{a\in\Bbb Z_r^*}\gamma^a=-1$.) Let $p=\text{char}\,\f_q$ and let
\begin{equation}\label{1.5}
C(n,k;q)=\bigl|\{(i,j): 0\le i,j\le n-1,\ t_{ij}-k\delta_i\not\equiv 0\pmod p\}\bigr|,
\end{equation}
which is the number of nonero entries of the matrix $(t_{ij}-k\delta_i)_{0\le i,j\le n-1}$ over $\f_p$ and is called the {\em complexity} of the normal basis $\alpha_0,\dots,\alpha_{n-1}$ of $\f_{q^n}$ over $\f_q$. Normal bases with low complexity are sought as they provide effective algorithms for multiplication and exponentiation in finite fields \cite{Gao-Gathen-Panario-Shoup-JSC-2000}.

Equation \eqref{1.5} involves two primes: $p=\text{char}\,\f_q$ and $r=nk+1$. The prime $p$ only plays a superficial role since the cyclotomic numbers $t_{ij}$ depend only on $r$ and $k$. 

The values of $C(n,k;q)$ were first determined in \cite{Mullin-Onyszchuk-Vanstone-Wilson-DAM-1988} for $k=1$ and for $k=2$ with an even $q$. (The case $k=2$ with an arbitrary $q$ is covered by a recent result of \cite{Liao-Hu-AMS-2014}.) A more careful investigation of the cyclotomic numbers $t_{ij}$ in \cite{Christopoulou-Garefalakis-Panario-Thomson-DCC-2012} produced explicit formulas for $C(n,k;q)$ with $3\le k\le 6$ and $n\ge 3$. More recently, the following general result was proved in \cite{Liao-Hu-AMS-2014}: 

\begin{thm}[{\cite[Theorem~1.7]{Liao-Hu-AMS-2014}}]\label{T1.1}
Assume that $t_{ij}\le 2$ for all $0\le i,j\le n-1$.
\begin{itemize}
\item[(i)] If $k$ is even,
\[
C(n,k;q)=
\begin{cases}
nk-k^2+3k-3&\text{if $p=2$},\vspace{2mm}\cr
\displaystyle nk-\frac 12 k^2+\frac 32k-2&\text{if $p>2$ and $k\equiv 0\pmod p$},\vspace{2mm}\cr
\displaystyle n(k+1)-\frac 12k^2+k-3&\text{if $p>2$ and $k\equiv 1\pmod p$},\vspace{2mm}\cr
\displaystyle n(k+1)-\frac 12k^2+\frac 12k-1&\text{if $p>2$ and $k\equiv 2\pmod p$},\vspace{2mm}\cr
\displaystyle n(k+1)-\frac 12k^2+k-2&\text{if $p>2$ and $k\not\equiv 0,1,2\pmod p$}.
\end{cases}
\]
\item[(ii)] If $k$ is odd,
\[
C(n,k;q)=
\begin{cases}
n(k+1)-k^2+k-1&\text{if $p=2$},\vspace{2mm}\cr
\displaystyle nk-\frac 12 k^2+\frac 32k-2&\text{if $p>2$ and $k\equiv 0\pmod p$},\vspace{2mm}\cr
\displaystyle n(k+1)-\frac 12k^2-\frac 12 k&\text{if $p>2$ and $k\equiv 1\pmod p$},\vspace{2mm}\cr
\displaystyle n(k+1)-\frac 12k^2+\frac 12 k-1&\text{if $p>2$ and $k\not\equiv 0,1\pmod p$}.
\end{cases}
\]
\end{itemize}
\end{thm}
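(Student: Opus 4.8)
The plan is to evaluate the sum in \eqref{1.5} by processing the matrix $(t_{ij}-k\delta_i)$ one row at a time. First I would record the row-sum identity $\sum_{j=0}^{n-1}t_{ij}=k-\delta_i$, obtained by counting the nonzero elements of $1+q^iK$ (the element $0$ occurs iff $-q^{-i}\in K$, i.e.\ iff $\delta_i=1$), and observe from \eqref{1.4.0} that exactly one index $i_0$ has $\delta_{i_0}=1$, namely $i_0=0$ when $k$ is even and $i_0=n/2$ when $k$ is odd. Under the hypothesis $t_{ij}\le 2$ every entry of row $i$ lies in $\{0,1,2\}$, so if $s_i$ is the number of $2$'s in row $i$ then that row consists of $s_i$ twos, $k-\delta_i-2s_i$ ones, and the rest zeros. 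A short case analysis -- splitting on whether $\delta_i=0$ or $1$, and on the residue of $k$ modulo $p$ -- then expresses each row's contribution to $C(n,k;q)$ solely in terms of $s_i$ (for a $\delta_i=0$ row the count is $k-2s_i$ if $p=2$ and $k-s_i$ if $p>2$, while the special row is handled via the values $-k,1-k,2-k$ of $t_{i_0j}-k$). This reduces the whole computation to two numbers: the total $M_2=\sum_i s_i$ of $2$'s and the number $s_{i_0}$ of $2$'s in the special row.

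For $M_2$ I would use $2M_2=\sum_{i,j}t_{ij}(t_{ij}-1)$, valid because $t(t-1)$ equals $2$ for $t=2$ and $0$ for $t\in\{0,1\}$. The right side counts triples $(i,a,b)$ with $a\ne b$ in $K$ and $1+q^ia,\,1+q^ib$ in a common coset $q^jK$. Writing $u=q^ia$ -- which ranges over $\Bbb Z_r^*$ exactly once as $(i,a)$ varies, since the cosets $q^iK$ partition $\Bbb Z_r^*$ -- and $b=ac$ with $c\in K\setminus\{1\}$, the condition becomes $(1+u)/(1+uc)\in K$. For each fixed $c$ the map $u\mapsto(1+u)/(1+uc)$ is a Möbius transformation of $\Bbb P^1(\Bbb Z_r)$, so the fibre over the $k$-element set $K$ has exactly $k$ points; two of them, $u=0$ (image $1$) and $u=\infty$ (image $c^{-1}$), fall outside $\Bbb Z_r^*$, leaving exactly $k-2$ admissible $u$. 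Summing over the $k-1$ values of $c$ gives $2M_2=(k-1)(k-2)$, hence $M_2=\tfrac12(k-1)(k-2)$, independent of $r$.

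To pin down the special row I would exploit the inversion involution $a\mapsto a^{-1}$ of $K$. Since $q^0K=K$ while $-q^{-n/2}\in K$ forces $q^{n/2}K=-K$, the relevant elements of row $i_0$ are $1+a$ $(a\in K)$ when $k$ is even and $1-\beta$ $(\beta\in K)$ when $k$ is odd. A one-line computation gives $(1+a^{-1})/(1+a)=a^{-1}\in K$ in the even case, so $1+a$ and $1+a^{-1}$ always share a coset, whereas $(1-\beta^{-1})/(1-\beta)=-\beta^{-1}\in q^{n/2}K$ in the odd case, so $1-\beta$ and $1-\beta^{-1}$ lie in opposite cosets (indices differing by $n/2$). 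For even $k$ the fixed points are $a=\pm1$, both in $K$: $a=-1$ yields the excluded $0$, $a=1$ yields the element $2$, and the remaining $k-2$ elements split into $(k-2)/2$ inversion-pairs; the bound $t_{ij}\le 2$ forbids two pairs, or a pair together with the lone element $2$, from sharing a coset, so row $0$ has exactly $(k-2)/2$ twos and a single $1$. For odd $k$ the involution is fixed-point-free on $K\setminus\{1\}$ and sends each element to the opposite coset, which I would use to argue $s_{i_0}=0$. Substituting $s_{i_0}=0$, respectively $s_{i_0}=(k-2)/2$, together with $M_2=\tfrac12(k-1)(k-2)$ into the per-row tally of the first paragraph then reproduces every branch of the theorem.

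The main obstacle is exactly the last assertion, that the special row carries no $2$ when $k$ is odd. The opposite-coset property shows that inversion-pairs cannot by themselves produce a repeated coset, but it does not rule out a coincidence between elements in two different inversion-orbits; equivalently, one must show that the only solutions of $1+w=w'+w''$ with $w,w',w''\in K$ are the trivial ones $\{w',w''\}=\{1,w\}$. This is a Sidon-type rigidity of $K$ for the relevant ternary equation, and it is precisely here that the hypothesis $t_{ij}\le 2$ must enter in an essential way (merely bounding a single $t_{i_0j}$ does not suffice, since the collision would reappear at index $j+n/2$ by the established symmetry $t_{i_0,j}=t_{i_0,j+n/2}$). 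I expect this cross-orbit rigidity argument in the special row to be the delicate heart of the proof, with every remaining step reducing to the bookkeeping already described.
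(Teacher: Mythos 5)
Your reduction framework is sound where it is carried out: the row-by-row tally, the identity $2M_2=\sum_{i,j}t_{ij}(t_{ij}-1)$ together with its M\"obius-transformation evaluation $(k-1)(k-2)$ (this is the paper's \eqref{2.16}, obtained there by counting fibers of the map $S$ of \eqref{2.3}), and the even-$k$ special-row count $s_0=(k-2)/2$ (the paper's \eqref{2.17}) are all correct; I also checked that your bookkeeping reproduces every branch of the stated formulas once the two quantities $M_2$ and $s_{i_0}$ are known.

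However, there is a genuine gap, and you flag it yourself: the assertion $s_{i_0}=0$ for odd $k$ is never proved --- you only describe it as a ``Sidon-type rigidity'' that you ``expect'' to be the delicate heart of the proof. This assertion is exactly the paper's claim \eqref{2.19}, equivalently $S(\mathcal S_k)\cap(-K)=\emptyset$, and it is the single fact that makes the odd-$k$ formulas differ from the even-$k$ ones; moreover it genuinely fails without the global hypothesis (for $k=5$, $r=11$, Table~\ref{Tb1} shows $a_*(2)=2$), so no argument confined to the special row alone can deliver it. The paper closes this gap as follows: if $S(u,v)=-\omega^l$ for some $(u,v)\in\mathcal S_k$ and $l\in\Bbb Z_k$, a direct computation gives
\[
S(2v-u,v)=\omega^{l+u-v}=S(-2(l+u-v),-(l+u-v)),
\]
where both argument pairs lie in $\mathcal S_k$ (oddness of $k$ rules out $\omega^{l+u-v}=-1$ and hence $2v-u=0$, and the previously established fact $1\notin S(\mathcal S_k)$ rules out $l+u-v=0$); injectivity of $S$ --- which by Remark~\ref{R2.5} is precisely the hypothesis $t_{ij}\le 2$ for all $i,j$ --- then forces $(2v-u,v)=(-2(l+u-v),-(l+u-v))$, hence $u=0$, a contradiction. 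This is the ``cross-orbit rigidity'' you anticipated but did not supply; without it (or an equivalent argument) part (ii), and therefore the theorem, remains unproven in your proposal.
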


It is claimed in \cite{Liao-Hu-AMS-2014} that for $k=7$, the only exceptional case not covered by Theorem~\ref{T1.1} is $n=4$ ($r=29$). We will see that for $k=7$, \cite{Liao-Hu-AMS-2014} missed another exceptional case ($n=6$, $r=43$); for $k=6$, \cite{Christopoulou-Garefalakis-Panario-Thomson-DCC-2012} also missed an exceptional case ($n=3$, $r=19$).

The main contribution of the present paper is the following: For each given $k\ge 1$, we show that except for finitely many primes $r$, which we shall call {\em exceptional primes}, the condition $t_{ij}\le 2$ ($0\le i,j\le n-1$) is satisfied and hence $C(n,k;q)$ is given by Theorem~\ref{T1.1}. The exceptional primes are easily determined using resultants in characteristic $0$. Moreover, for each exceptional prime $r$, we describe an algorithm for computing the value distribution of the cyclotomic numbers, which then gives $C(n,k;q)$. The combined message is that for any given $k$ (not too big), $C(n,k;q)$ can be determined for all qualified $n$ and $q$. We demonstrate the computational results of $C(n,k;q)$ for $k\le 20$.

Section~2 contains a review and a further discussion of the cyclotomic numbers. In Section~3 we prove that for each given $k$, there are only finitely many exceptional primes $r$, and we explain where these exceptional primes come from and how to find them. In Section~4 we describe an algorithm that allows one to compute $C(n,k;q)$ for a given $k$, an exceptional prime $r=nk+1$, and all qualified $q$. The computational results of $C(n,k;q)$ for $k\le 20$ are included. We also give a theoretic explanation for the computational results with $n=2$.

\section{Cyclotomic Numbers}

Cyclotomic numbers have been studied in terms of Jacobi sums by many authors; see for example \cite{Acharya-Katre-AA-1995, Baumert-Fredricksen-1967, Dickson-AJM-1935, Katre-Rajwade-AA-1985, Muskat-AA-1966, Shirolkar-Katre-AA-2011, Whiteman-AA-1960}. The focus of the present paper is a little different. 
Most (but not all) of the results gathered in this section have appeared, explicitly or implicitly, in \cite{Christopoulou-Garefalakis-Panario-Thomson-DCC-2012, Liao-Hu-AMS-2014}.

Recall that $n>1$, $r=nk+1$ is a prime, $K$ is the unique subgroup of $\Bbb Z_r^*$ of order $k$, and $\langle q,K\rangle=\Bbb Z_r^*$. Write $K_i=q^iK$, $0\le i\le n-1$, hence 
\begin{equation}\label{2.1}
t_{ij}=|(1+K_i)\cap K_j|,\quad 0\le i,j\le n-1.
\end{equation}

\begin{lem}\label{L2.1}
$t_{ij}>0$ if and only if there exists $x\in\Bbb Z_r\setminus\{0,-1\}$ such that $K_i=xK$ and $K_j=(1+x)K$.
\end{lem}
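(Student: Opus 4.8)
The plan is to establish both implications directly from the definition $t_{ij}=|(1+K_i)\cap K_j|$, the only inputs being that $r$ is prime (so every nonzero residue is a unit) and that $K$ is a subgroup of $\Bbb Z_r^*$, which guarantees that any element of a coset $q^iK$ generates that very coset.

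First I would prove the forward direction. Suppose $t_{ij}>0$, so that there is some $w\in(1+K_i)\cap K_j$. Writing $w=1+x$ with $x\in K_i$, I note that $x\in K_i\subseteq\Bbb Z_r^*$ forces $x\ne 0$, and that $x=-1$ would give $w=0\notin K_j$, so $x\ne -1$; hence $x\in\Bbb Z_r\setminus\{0,-1\}$. Since $x\in K_i=q^iK$ and $K$ is a subgroup, $xK=q^iK=K_i$. Likewise $1+x=w\in K_j=q^jK$ gives $(1+x)K=K_j$. This produces the required $x$.

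For the converse I would simply reverse the steps. Given $x\in\Bbb Z_r\setminus\{0,-1\}$ with $K_i=xK$ and $K_j=(1+x)K$, primality of $r$ makes both $x$ and $1+x$ units, so the cosets $xK$ and $(1+x)K$ are well defined. Since $1\in K$, we have $x\in xK=K_i$ and $1+x\in(1+x)K=K_j$. Therefore $1+x\in(1+K_i)\cap K_j$, so this intersection is nonempty and $t_{ij}>0$.

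I do not anticipate a genuine obstacle: the statement is essentially a reformulation of the defining intersection in terms of coset representatives. The one point demanding care is the bookkeeping of the excluded values $0$ and $-1$, ensuring that $x$ and $1+x$ are genuine units so that $xK$ and $(1+x)K$ are honest cosets of $K$ and that the element witnessing $t_{ij}>0$ matches the claimed representative on the nose.
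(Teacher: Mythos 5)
Your proof is correct and follows essentially the same route as the paper's: the witness $y\in(1+K_i)\cap K_j$ yields $x=y-1$ for the forward direction, and $1+x\in(1+K_i)\cap K_j$ for the converse. You simply spell out the details the paper leaves implicit (that $x\ne 0,-1$ because $x\in K_i\subseteq\Bbb Z_r^*$ and $1+x\in K_j\subseteq\Bbb Z_r^*$, and that any element of a coset of $K$ generates that coset), which is fine.
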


\begin{proof} ($\Leftarrow$) We have $1+x\in(1+K_i)\cap K_j$, hence $t_{ij}>0$.

\medskip
($\Rightarrow$) Let $y\in(1+K_i)\cap K_j$ and $x=y-1$. Then $x\in K_i$ and $1+x\in K_j$.
\end{proof}

For $x,y\in\Bbb Z_r\setminus\{0.-1\}$, define $x\sim y$ if and only if $xK=yK$ and $(1+x)K=(1+y)K$, i.e., $x/y\in K$ and $(1+x)/(1+y)\in K$. Let $[x]$ denote the $\sim$ equivalence class of $x$. For conveninence, we also define $[0]=\{0\}$ and $[-1]=\{-1\}$.

\begin{lem}\label{L2.2}
For each $x\in\Bbb Z_r$,
\begin{equation}\label{2.2}
(1+xK)\cap(1+x)K=1+[x].
\end{equation}
\end{lem}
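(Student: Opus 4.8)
The plan is to verify the set identity \eqref{2.2} directly, treating the generic case $x\in\Bbb Z_r\setminus\{0,-1\}$ by a double inclusion and disposing of the two degenerate values $x=0$ and $x=-1$ by hand, since for these the class $[x]$ is defined by convention rather than by the relation $\sim$.

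For the generic case I would first establish $1+[x]\subseteq(1+xK)\cap(1+x)K$. Given $z=1+y$ with $y\sim x$, the condition $yK=xK$ together with $1\in K$ gives $y\in xK$, so $z\in 1+xK$; similarly $(1+y)K=(1+x)K$ gives $1+y\in(1+x)K$, that is $z\in(1+x)K$. Conversely, for $z\in(1+xK)\cap(1+x)K$ I would set $y=z-1$ and read off $y\in xK$ and $1+y\in(1+x)K$, whence $yK=xK$ and $(1+y)K=(1+x)K$. The one point requiring care is that $y$ must lie in $\Bbb Z_r\setminus\{0,-1\}$ for the relation $y\sim x$ to be meaningful: here I would use that $y=xa$ with $x\neq 0$ and $a\in K\subseteq\Bbb Z_r^*$ forces $y\neq 0$, and that $1+y=(1+x)b$ with $x\neq -1$ and $b\in K$ forces $1+y\neq 0$, i.e. $y\neq -1$. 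Thus $y\in[x]$ and $z\in 1+[x]$.

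Finally I would check the two boundary values. For $x=0$ one has $0\cdot K=\{0\}$, so the left side is $\{1\}\cap K=\{1\}$ because $1\in K$, which matches $1+[0]=\{1\}$. For $x=-1$ one has $(1+x)K=\{0\}$ while $0=1-1\in 1-K$, so the left side is $\{0\}$, which matches $1+[-1]=\{0\}$.

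The content here is light; the only genuine subtlety, and the step I would be most careful about, is confirming that the representative $y=z-1$ avoids the excluded values $0$ and $-1$, since this is exactly what legitimizes $y$ as a member of a $\sim$-class. Everything else is a routine unwinding of the definitions of $xK$, $1+xK$, and $\sim$, using only that $1\in K$ and that $K\subseteq\Bbb Z_r^*$ consists of units.
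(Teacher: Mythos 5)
Your proof is correct and follows essentially the same route as the paper's: a direct unwinding of the definitions in the generic case $x\ne 0,-1$ together with separate verification of the two conventional cases $x=0$ and $x=-1$. The only difference is one of care: the paper compresses the generic case into a chain of equivalences, leaving implicit the point you rightly single out, namely that $y=z-1$ avoids $0$ and $-1$ (which is what makes $y\sim x$ meaningful), so your write-up is, if anything, slightly more complete.
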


\begin{proof} Let $y\in\Bbb Z_r$. If $x\ne 0,-1$,
\[
\begin{array}{rcl}
1+y\in(1+xK)\cap (1+x)K&\Leftrightarrow & y\in xK\ \text{and}\ 1+y\in(1+x)K\cr
&\Leftrightarrow & y\sim x.
\end{array}
\]
If $x=0$,
\[
1+y\in(1+0K)\cap (1+0)K\ \Leftrightarrow\ y=0.
\]
If $x=-1$,
\[
1+y\in(1-K)\cap 0K\ \Leftrightarrow\ y=-1.
\]
\end{proof}

Let $K=\langle\omega\rangle$ and let $\mathcal S_k=\{(u,v)\in\Bbb Z_k^2:u\ne 0,\ v\ne 0,\ u\ne v\}$. Define 
\begin{equation}\label{2.3}
\begin{array}{cccc}
S:&\mathcal S_k&\longrightarrow & \Bbb Z_r^*\setminus\{-1\}\vspace{2mm}\cr
&(u,v)&\longmapsto &\displaystyle -\frac{1-\omega^v}{\omega^u-\omega^v}.
\end{array}
\end{equation}

\begin{lem}\label{L2.3}
Let $x,y\in\Bbb Z_r^*\setminus\{-1\}$. Then $x\sim y$ but $x\ne y$ if and only if there exists $(u,v)\in S^{-1}(x)$ such that $y=S(-u,-v)$.
\end{lem}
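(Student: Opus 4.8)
The plan is to convert the two membership conditions defining $\sim$ into a pair of exponents over $K=\langle\omega\rangle$, solve the resulting linear relation for $x$, and observe that the solution is exactly $S(u,v)$ while the partner $y$ is $S(-u,-v)$. Both implications rest on the same elimination computation read in opposite directions, so I would organize everything around the single relation obtained by clearing denominators in the formula for $S$.

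For the forward implication, suppose $x\sim y$ with $x\ne y$. Since $x,y\in\Bbb Z_r^*\setminus\{-1\}$, the quotients $x/y$ and $(1+x)/(1+y)$ are defined, and the hypotheses $x/y\in K$, $(1+x)/(1+y)\in K$ give unique $u,v\in\Bbb Z_k$ with $y=\omega^ux$ and $1+y=\omega^v(1+x)$. I would first check that $(u,v)\in\mathcal S_k$: if $u=0$ then $x=y$; if $v=0$ then $1+y=1+x$, again $x=y$; and if $u=v$ then substituting $y=\omega^ux$ into $1+y=\omega^u(1+x)$ forces $\omega^u=1$, i.e. $u=0$, a contradiction. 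Eliminating $y$ from the two relations yields $1+\omega^ux=\omega^v+\omega^vx$, hence $x=(1-\omega^v)/(\omega^v-\omega^u)=S(u,v)$, so $(u,v)\in S^{-1}(x)$. It remains to identify $y$: clearing the denominators of $S(-u,-v)=-(1-\omega^{-v})/(\omega^{-u}-\omega^{-v})$ by multiplying through by $\omega^{u+v}$ gives $S(-u,-v)=\omega^u(1-\omega^v)/(\omega^v-\omega^u)=\omega^ux=y$, which completes this direction.

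For the converse, suppose $(u,v)\in S^{-1}(x)$ and $y=S(-u,-v)$. Since $(u,v)\in\mathcal S_k$ forces $(-u,-v)\in\mathcal S_k$, the element $y$ is a well-defined member of $\Bbb Z_r^*\setminus\{-1\}$, and likewise $x=S(u,v)\in\Bbb Z_r^*\setminus\{-1\}$. Writing $x=S(u,v)$, I would compute $1+x=(\omega^u-1)/(\omega^u-\omega^v)$ together with the analogous expressions for $y$, and verify the two ratio identities $x/y=\omega^{-u}$ and $(1+x)/(1+y)=\omega^{-v}$; both lie in $K$, so $x\sim y$. Finally $u\ne0$ gives $\omega^{-u}\ne1$, hence $x\ne y$, as required.

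The whole argument is a bookkeeping exercise in the rational functions $S(u,v)$ and $S(-u,-v)$ of $\omega^u,\omega^v$, and the only delicate point I anticipate is keeping the excluded cases aligned. The minor obstacle is matching the three constraints defining $\mathcal S_k$ with the genuinely nontrivial $\sim$-related pairs: $u\ne v$ is precisely what keeps the denominator $\omega^u-\omega^v$ (equivalently $S(u,v)$) well defined, while $u=0$ and $v=0$ each collapse the relations to $x=y$; conversely these constraints guarantee, as is built into the codomain of $S$, that $S(u,v)\in\Bbb Z_r^*\setminus\{-1\}$ (indeed $S(u,v)=0\iff v=0$ and $S(u,v)=-1\iff u=0$). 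Once these correspondences are recorded, both implications follow from the single elimination step solving $1+\omega^ux=\omega^v(1+x)$ for $x$.
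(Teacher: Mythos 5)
Your proof is correct and takes essentially the same route as the paper's: both translate $x\sim y$ into the system $y=\omega^u x$, $1+y=\omega^v(1+x)$, check that $(u,v)\in\mathcal S_k$ using $x\ne y$ and $x,y\notin\{0,-1\}$, solve to get $x=S(u,v)$ and $y=S(-u,-v)$, and run the same relations backwards for the converse. Your write-up simply makes explicit the elimination step and the identity $S(-u,-v)=\omega^uS(u,v)$ that the paper compresses into ``solving the system gives $x=S(u,v)$ and $y=S(-u,-v)$.''
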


\begin{proof}
($\Rightarrow$) Since $x\sim y$, we have 
\begin{equation}\label{2.4}
\begin{cases}
\displaystyle\frac yx=\omega^u,\vspace{2mm}\cr
\displaystyle\frac{1+y}{1+x}=\omega^v,
\end{cases}
\end{equation}
for some $u,v\in\Bbb Z_k$. Since $x,y\in\{0,-1\}$ and $x\ne y$, we have $u\ne 0$, $v\ne 0$ and $u\ne v$. Solving \eqref{2.4} gives $x=S(u,v)$ and $y=S(-u,-v)$.

\medskip
($\Rightarrow$) \eqref{2.4} is satisfied for $x=S(u,v)$ and $y=S(-u,-v)$. Since $u\ne 0$, $y\ne x$.
\end{proof}

\begin{prop}\label{P2.4}
Let $x\in\Bbb Z_r^*\setminus\{-1\}$. Then 
\begin{equation}\label{2.5}
|[x]|=1+|S^{-1}(x)|.
\end{equation}
\end{prop}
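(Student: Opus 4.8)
The plan is to establish the formula $|[x]|=1+|S^{-1}(x)|$ by exhibiting an explicit bijection between the nontrivial elements of the equivalence class $[x]$ (that is, $[x]\setminus\{x\}$) and the fibre $S^{-1}(x)$. The term $1$ accounts for $x$ itself, so the content of the proposition is that $|[x]\setminus\{x\}|=|S^{-1}(x)|$. Since Lemma~\ref{L2.3} already characterizes exactly when a second element $y$ is $\sim$-equivalent to $x$, the machinery is essentially in place.

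First I would define the candidate map. Given $(u,v)\in S^{-1}(x)$, set $\Phi(u,v)=S(-u,-v)$. By Lemma~\ref{L2.3}, since $(u,v)\in S^{-1}(x)$ means $x=S(u,v)$, the element $y=S(-u,-v)$ satisfies $y\sim x$ and $y\ne x$, so $\Phi$ indeed lands in $[x]\setminus\{x\}$. Conversely, Lemma~\ref{L2.3} guarantees that every $y\in[x]\setminus\{x\}$ arises this way from some $(u,v)\in S^{-1}(x)$, which shows $\Phi$ is surjective onto $[x]\setminus\{x\}$.

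The remaining point, and the step I expect to be the real obstacle, is injectivity of $\Phi$. Two distinct pairs $(u,v),(u',v')\in S^{-1}(x)$ could a priori yield the same $y=S(-u,-v)=S(-u',-v')$. I would argue that from $y$ together with $x$ one can recover the pair uniquely: by \eqref{2.4} we have $\omega^u=y/x$ and $\omega^v=(1+y)/(1+x)$, so $u$ and $v$ are determined in $\Bbb Z_k$ by $x$ and $y$. Hence the map $(u,v)\mapsto S(-u,-v)$ restricted to the fibre $S^{-1}(x)$ is injective, because the pair is forced by the ratios $y/x$ and $(1+y)/(1+x)$ once $x$ is fixed. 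Combining surjectivity and injectivity gives $|S^{-1}(x)|=|[x]\setminus\{x\}|$, and adding back the element $x$ yields \eqref{2.5}.

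One subtlety I would check carefully is that $S^{-1}(x)$ is genuinely the indexing set and that no pair in it is ``lost'' or double counted; in particular I would confirm that the involution-like passage $(u,v)\mapsto(-u,-v)$ interacts correctly with membership in $\mathcal S_k$ (namely that $(-u,-v)\in\mathcal S_k$ whenever $(u,v)\in\mathcal S_k$, which holds since $u\ne0,\ v\ne0,\ u\ne v$ are preserved under negation in $\Bbb Z_k$). With that verified, the bijection is clean and the count follows immediately.
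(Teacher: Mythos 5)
Your proposal is correct and follows essentially the same route as the paper: both use Lemma~\ref{L2.3} to write $[x]$ as the disjoint union of $\{x\}$ and the image of $S^{-1}(x)$ under $(u,v)\mapsto S(-u,-v)$, and then verify that this map is injective on the fibre. The only cosmetic difference is in the injectivity step, where you recover both $u$ and $v$ at once from the ratios $y/x=\omega^u$ and $(1+y)/(1+x)=\omega^v$ of \eqref{2.4}, while the paper recovers $u$ first via the identity $S(-u,-v)=\omega^u S(u,v)$ and then $v$ by cancellation; these are equivalent.
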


\begin{proof}
By Lemma~\ref{L2.3},
\[
[x]=\{x\}\overset{\boldsymbol\cdot}\cup \{S(-u,-v):(u,v)\in S^{-1}(x)\},
\]
where $\overset{\boldsymbol\cdot}\cup$ means disjoint union. It remains to show that the mapping
\[
\begin{array}{ccc}
S^{-1}(x)&\longrightarrow &\Bbb Z_r^*\setminus\{-1\}\vspace{2mm}\cr
(u,v)&\longmapsto & S(-u,-v)
\end{array}
\]
is one-to-one. Let $(u_1,v_1),(u_2,v_2)\in S^{-1}(x)$ be such that $S(-u_1,-v_1)=S(-u_2,-v_2)$. Then
\[
\omega^{u_1}S(u_1,v_1)=S(-u_1,-v_1)=S(-u_2,-v_2)=\omega^{u_2}S(u_2,v_2).
\]
It follows that $u_1=u_2$. Then $S(u_1,v_1)=S(u_1,v_2)$, which gives $v_1=v_2$.
\end{proof}

\begin{rmk}\label{R2.5}\rm
It follows from Lemma~\ref{L2.1}, \eqref{2.2} and \eqref{2.5} that $t_{ij}\le 2$ for all $0\le i,j\le n-1$ if and only if the mapping $S:\mathcal S_k\to\Bbb Z_r^*\setminus\{-1\}$ is one-to-one.
\end{rmk}

For $0\le i\le n-1$ and $0\le \tau\le k$, define
\begin{align}\label{2.6}
a_i(\tau)\,&=|\{0\le j\le n-1:t_{ij}=\tau\}|,\\
\label{2.7}
a(\tau)\,&=\sum_{i=0}^{n-1}a_i(\tau).
\end{align}
Then
\begin{equation}\label{2.8}
\sum_\tau\tau a_i(\tau)=\sum_jt_{ij}=\sum_j|(1+K_i)\cap K_j|=|(1+K_i)\cap\Bbb Z_r^*|=k-\delta_i
\end{equation}
and
\begin{equation}\label{2.9}
\sum_\tau \tau a(\tau)=nk-1.
\end{equation}
Let 
\begin{equation}\label{2.9-1}
a_*(\tau)=
\begin{cases}
a_0(\tau)&\text{if $k$ is even},\cr
a_{n/2}(\tau)&\text{if $k$ is odd}.
\end{cases}
\end{equation}
It follows from \eqref{1.5} that
\begin{align}\label{2.9-2}
C(n,k;q)\,&=\sum_{\tau\not\equiv 0\,(\text{mod}\, p)}\bigl(a(\tau)-a_*(\tau)\bigr)+\sum_{\tau\not\equiv k\,(\text{mod}\, p)}a_*(\tau)\cr
&=n^2-\sum_{\tau\equiv 0\,(\text{mod}\, p)}\bigl(a(\tau)-a_*(\tau)\bigr)-\sum_{\tau\equiv k\,(\text{mod}\, p)}a_*(\tau),
\end{align}
where $p=\text{char}\,\f_q$.
Therefore, $C(n,k;q)$ is determined by $a(\tau)$ and $a_*(\tau)$, $0\le \tau\le k$.

\begin{prop}\label{P2.6}
\begin{itemize}
\item[(i)] Let $0\le i\le n-1$. We have
\begin{align}\label{2.10}
a_i(\tau)\,&=\frac 1\tau\bigl|\{x\in K_i\setminus\{-1\}: |S^{-1}(x)|=\tau-1\}\bigr|,\quad 2\le \tau\le k,\\
\label{2.11}
a_i(1)\,&=k-\delta_i-\sum_{\tau=2}^k\tau a_i(\tau),\\
\label{2.12}
a_i(0)\,&=n-\sum_{\tau=1}^ka_i(\tau).
\end{align}

\item[(ii)] We have
\begin{align}\label{2.13}
a(\tau)\,&=\frac 1\tau\bigl|\{x\in \Bbb Z_r^*\setminus\{-1\} : |S^{-1}(x)|=\tau-1\}\bigr|,\quad 2\le \tau\le k,\\
\label{2.14}
a(1)\,&=nk-1-\sum_{\tau=2}^k\tau a(\tau),\\
\label{2.15}
a(0)\,&=n^2-\sum_{\tau=1}^ka(\tau).
\end{align}
\end{itemize}
\end{prop}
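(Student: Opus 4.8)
The plan is to reduce everything to the correspondence, furnished by Lemmas~\ref{L2.1} and \ref{L2.2} together with Proposition~\ref{P2.4}, between the cyclotomic number $t_{ij}$ and the size of an equivalence class $[x]$. First I would fix $i$ and note that for any $x\in K_i\setminus\{-1\}$ (automatically $x\ne 0$, since $K_i\subseteq\Bbb Z_r^*$) the element $1+x$ is nonzero and hence lies in a unique coset $K_{j(x)}$. Writing $K_i=xK$ and $K_{j(x)}=(1+x)K$, Lemma~\ref{L2.2} gives $(1+K_i)\cap K_{j(x)}=(1+xK)\cap(1+x)K=1+[x]$, so that
\[
t_{i,j(x)}=|[x]|=1+|S^{-1}(x)|
\]
by Proposition~\ref{P2.4}. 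Thus the value $t_{ij}$ attached to a coset $K_j$ with $t_{ij}>0$ is read off from the class size of any $x\in K_i$ with $1+x\in K_j$.

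Next I would carry out a fiber count to establish \eqref{2.10}. Fix $\tau$ with $2\le\tau\le k$ and set $A=\{x\in K_i\setminus\{-1\}:|S^{-1}(x)|=\tau-1\}$. Two elements $x,x'\in A$ satisfy $j(x)=j(x')$ exactly when $(1+x)K=(1+x')K$; since $xK=x'K=K_i$ already holds, this is precisely the relation $x\sim x'$, so the fibers of $x\mapsto j(x)$ on $A$ are the classes $[x]$, each of size $1+|S^{-1}(x)|=\tau$. The image $\{j(x):x\in A\}$ is visibly $\{j:t_{ij}=\tau\}$, whence $a_i(\tau)=|A|/\tau$, which is \eqref{2.10}. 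Equations \eqref{2.11} and \eqref{2.12} are then bookkeeping: \eqref{2.11} is \eqref{2.8}, namely $\sum_\tau\tau a_i(\tau)=k-\delta_i$, solved for $a_i(1)$, while \eqref{2.12} records that the $n$ indices $j$ are partitioned by the value of $t_{ij}$, so $\sum_{\tau\ge 0}a_i(\tau)=n$.

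For part~(ii) I would simply sum over $i$. Since the cosets $K_0,\dots,K_{n-1}$ partition $\Bbb Z_r^*$ and $-1$ lies in exactly one of them, the sets $K_i\setminus\{-1\}$ are disjoint with union $\Bbb Z_r^*\setminus\{-1\}$; summing \eqref{2.10} therefore yields \eqref{2.13}. Summing \eqref{2.12} gives \eqref{2.15} via $\sum_ia_i(0)=n^2-\sum_{\tau\ge 1}a(\tau)$. Finally, summing \eqref{2.11} gives \eqref{2.14} once one observes that $\sum_i\delta_i=1$, which holds because by \eqref{1.4.0} exactly one index $i$ (namely $i=0$ when $k$ is even and $i=n/2$ when $k$ is odd) has $\delta_i=1$; hence $\sum_i(k-\delta_i)=nk-1$.

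I expect the only genuine subtlety to be the fiber-counting step for \eqref{2.10}. One must verify both that distinct classes in $K_i\setminus\{-1\}$ map to distinct cosets $K_j$ (injectivity of the induced map on classes) and that \emph{every} element $y$ of a class of size $\tau$ satisfies $|S^{-1}(y)|=\tau-1$, so that $A$ is an exact union of $\tau$-element fibers and the division by $\tau$ is legitimate. Both points follow from the fact that $|[y]|=1+|S^{-1}(y)|$ is constant along each class (and that $-1\notin[x]$ whenever $x\ne-1$, so the classes stay inside $K_i\setminus\{-1\}$), but this is precisely where care is required; the remainder of the argument is summation.
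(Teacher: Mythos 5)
Your proposal is correct and follows essentially the same route as the paper: the paper also proves \eqref{2.10} by showing that the map $x\mapsto j(x)$ (where $1+x\in K_{j(x)}$) from $\{x\in K_i\setminus\{-1\}:|S^{-1}(x)|=\tau-1\}$ onto $\{j:t_{ij}=\tau\}$ is well defined, surjective, and $\tau$-to-$1$ with fibers exactly the classes $[x]$, invoking Lemmas~\ref{L2.1}, \ref{L2.2} and Proposition~\ref{P2.4} at the same points, and it treats \eqref{2.11}, \eqref{2.12} and part~(ii) as the same bookkeeping you describe. The subtlety you flag (constancy of $|[y]|$ along a class, and $-1\notin[x]$ for $x\ne-1$) is precisely the content of the paper's step $3^\circ$.
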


\begin{proof} (i) We only have to prove \eqref{2.10}. (\eqref{2.11} follows from \eqref{2.8}, and \eqref{2.12} is obvious.) Let $2\le \tau\le k$ and let
\[
\mathcal X=\{x\in K_i\setminus\{-1\}: |S^{-1}(x)|=\tau-1\}.
\]
Define
\[
\begin{array}{cccl}
f:&\mathcal X& \longrightarrow &\{0\le j\le n-1:t_{ij}=\tau\}\vspace{2mm}\cr
&x&\longmapsto & j,\quad \text{where}\ 1+x\in K_j.
\end{array}
\]

\medskip
$1^\circ$ We claim that $f$ is well defined. Assume that $x\in\mathcal X$. Since $S^{-1}(x)\ne\emptyset$, $x\ne-1$, so $1+x\in K_j$ for some $0\le j\le n-1$. Note that
\begin{align*}
t_{ij}\,&=|(1+xK)\cap(1+x)K|\cr
&=|[x]|\kern 3cm \text{(by \eqref{2.2})}\cr
&=1+|S^{-1}(x)|\kern 1.7cm \text{(by \eqref{2.5})}\cr
&=\tau.
\end{align*}

\medskip
$2^\circ$ We claim that $f$ is onto. Assume that $t_{ij}=\tau$. Since $\tau\ge 2$, by Lemma~\ref{L2.1}, there exists $x\in\Bbb Z_r^*\setminus\{-1\}$ such that $K_i=xK$ and $K_j=(1+x)K$. Moreover, by \eqref{2.5} and \eqref{2.2},
\[
|S^{-1}(x)|=|[x]|-1=|((1+xK)\cap(1+x)K|-1=t_{ij}-1=\tau-1.
\]
Hence $x\in\mathcal X$ and $f(x)=j$.

\medskip
$3^\circ$ Let $x\in\mathcal X$ and $j=f(x)$. We claim that $f^{-1}(j)=[x]$; since $|[x]|=\tau$, this claim implies that $f$ is $\tau$-to-$1$. If $x'\in f^{-1}(j)$, then $x'\in K_i$ and $1+x'\in K_j=(1+x)K$, i.e., $x'\in[x]$. On the other hand, if $x'\in[x]$, then
\[
|S^{-1}(x')|=|[x']|-1=|[x]|-1=\tau-1.
\]
Hence $x'\in\mathcal X$. Since $1+x'\in(1+x)K=K_j$, we have $f(x')=j$, i.e., $x\in f^{-1}(j)$.

\medskip
$4^\circ$ Equation~\eqref{2.10} follows from $1^\circ$ -- $3^\circ$.

\medskip
(ii) The equations follow from (i).
\end{proof}

Assume that $t_{ij}\le 2$ for all $0\le i,j\le n-1$, i.e., $S:\mathcal S_k\to\Bbb Z_r^*\setminus\{-1\}$ is one-to-one. Under this assumption, we are able to determine $a(2)$ and $a_*(2)$ explicitly. By \eqref{2.13},
\begin{equation}\label{2.16}
a(2)=\frac 12|\mathcal S_k|=\frac 12(k-1)(k-2).
\end{equation}
We claim that $1\notin S(\mathcal S_k)$. If, to the contrary, $S(u,v)=1$ for some $(u,v)\in\mathcal S_k$, then $S(u,v)=S(u,v)^{-1}=S(-u,v-u)$, where $(-u,v-u)\in\mathcal S_k$. It follows that $(u,v)=(-u,v-u)$, which is a contradiction.

\medskip
(i) Assume that $k$ is even. Then $\{\omega^0,\omega^{k/2}\}=\{\pm1\}$ is disjoint from $S(\mathcal S_k)$. On the other hand, for all $u\in\Bbb Z_k\setminus\{0,k/2\}$, $\omega^u=S(-2u,-u)$, where $(-2u,-u)\in\mathcal S_k$. Hence
\begin{equation}\label{2.17}
a_0(2)=\frac12 (k-2).
\end{equation}

\medskip
(ii) Assume that $k$ is odd. We claim that $S(\mathcal S_k)\cap(-K)=\emptyset$. If, to the contrary, there exist $(u,v)\in\mathcal S_k$ and $l\in\Bbb Z_k$ such that $S(u,v)=-\omega^l$. It follows that 
\[
-\frac{1-\omega^v}{\omega^{2v-u}-\omega^v}=\omega^{l+u-v}=-\frac{1-\omega^{-(l+u-v)}}{\omega^{-2(l+u-v)}-\omega^{-(l+u-v)}}.
\]
In the above, $\omega^{l+u-v}\ne -1$ since $k$ is odd. Thus $2v-u\ne 0$ and $(2v-u,v)\in\mathcal S_k$. Moreover, $l+u-v\ne 0$ since otherwise, $S(2v-u,v)=1$, which is impossible. Therefore $(-2(l+u-v),-(l+u-v))\in\mathcal S_k$ and 
\[
S(2v-u,u)=S(-2(l+u-v),-(l+u-v)).
\]
Then $(2v-u,v)=(-2(l+u-v),-(l+u-v))$, which forces $u=0$, a contradiction. Therefore the claim is proved. We conclude that 
\begin{equation}\label{2.19}
a_{n/2}(2)=0.
\end{equation}

\begin{rmk}\label{R2.7}\rm
In the above, one can further determine $a(\tau)$ and $a_*(\tau)$, $\tau=0,1$, using Proposition~\ref{P2.6}. Then $C(n,k;q)$ is given by \eqref{2.9-2}, and the result is Theorem~\ref{T1.1}.
\end{rmk}


\section{Exceptional Primes}

We follow the notation of Section~2. First note that for $(u,v),(u'v')\in\mathcal S_k$, $S(u,v)=S(u',v')$ if and only if ${\overline f}_{(u,v),(u'v')}(\omega)=0$, where
\begin{align}\label{3.1}
f_{(u,v),(u'v')}\,&=X^{u+v'}-X^{u'+v}-X^u+X^{u'}+X^v-X^{v'}\cr
&=(1-X^u)(1-X^{v'})-(1-X^{u'})(1-X^v)\in\Bbb Z[X],
\end{align}
and $\overline{(\ )}$ is the reduction from $\Bbb Z[X]$ to $\Bbb Z_r[X]$. Let $\Phi_k\in\Bbb Z[X]$ denote the $k$th cyclotomic polynomial. Then
\[
\overline{\Phi}_k=\prod_{l\in\Bbb Z_k^\times}(X-\omega^l),
\]
where $\Bbb Z_k^\times$ is the multiplicative group of $\Bbb Z_k$.

\begin{lem}\label{L3.1}
Let $(u,v),(u',v')\in\mathcal S_k$ be such that $(u,v)\ne(u',v')$. Then 
\[
f_{(u,v),(u'v')}\not\equiv 0 \pmod{\Phi_k}.
\]
\end{lem}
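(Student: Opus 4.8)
The goal is to show that the polynomial $f_{(u,v),(u'v')}$, defined in \eqref{3.1}, is not divisible by the cyclotomic polynomial $\Phi_k$ when $(u,v)\ne(u',v')$ are distinct elements of $\mathcal S_k$. The plan is to argue by contradiction: if $\Phi_k \mid f_{(u,v),(u'v')}$ in $\Bbb Z[X]$, then every primitive $k$th root of unity $\zeta$ in $\Bbb C$ must satisfy $f_{(u,v),(u'v')}(\zeta)=0$. Working over $\Bbb C$ rather than over $\Bbb Z_r$ is the key shift: the relation $S(u,v)=S(u',v')$ was phrased through $\omega\in\Bbb Z_r^*$, but the factorization in \eqref{3.1} shows that $f_{(u,v),(u'v')}(X)=(1-X^u)(1-X^{v'})-(1-X^{u'})(1-X^v)$ is a single integer polynomial, so I can test it at a complex primitive root of unity.

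First I would fix a primitive complex $k$th root of unity $\zeta=e^{2\pi i/k}$ and suppose toward a contradiction that $f_{(u,v),(u'v')}(\zeta)=0$, i.e.
\[
(1-\zeta^u)(1-\zeta^{v'})=(1-\zeta^{u'})(1-\zeta^v).
\]
Since $u,v,u',v'$ are all nonzero in $\Bbb Z_k$ and $u\ne v$, $u'\ne v'$, none of the four factors vanishes, so this is an identity between nonzero complex numbers. The natural tool is to compare absolute values and arguments of both sides. For the modulus, $|1-\zeta^t|=2|\sin(\pi t/k)|$, so the equation forces $|\sin(\pi u/k)\sin(\pi v'/k)|=|\sin(\pi u'/k)\sin(\pi v/k)|$; for the argument, writing $1-\zeta^t=-2i\sin(\pi t/k)e^{\pi i t/k}$ one gets a congruence $u+v'\equiv u'+v\pmod{2k}$ (with a sign/branch bookkeeping that I would carry out carefully). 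Combining the modulus and argument conditions should pin down the multiset $\{u,v'\}$ against $\{u',v\}$ tightly enough to conclude $(u,v)=(u',v')$, contradicting the hypothesis.

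An alternative and perhaps cleaner route, which I would develop in parallel, is purely algebraic. Expanding the product form and using $S(u,v)=-\frac{1-\zeta^v}{\zeta^u-\zeta^v}$ (the complex analogue of \eqref{2.3}), the hypothesis $f_{(u,v),(u'v')}(\zeta)=0$ is equivalent to the equality of two elements of the form $S(\cdot,\cdot)$ over $\Bbb C$, and I would exploit the injectivity structure of such Gauss-period-type expressions. The cleanest statement is that the map $(u,v)\mapsto \bigl((1-\zeta^u),(1-\zeta^v)\bigr)$-based cross-ratio determines $(u,v)$ uniquely among pairs with $u,v,u-v$ all nonzero mod $k$; establishing this injectivity over $\Bbb C$ (where there are no coincidental collapses as can happen mod $r$) is the heart of the matter and is exactly what fails to hold only when $\omega$ is replaced by an element of finite order in a residue field.

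The main obstacle I anticipate is the argument/modulus bookkeeping in the complex computation: disentangling $u+v'\equiv u'+v\pmod{2k}$ from the sine-modulus equation and then ruling out the crossed case $\{u,v'\}=\{u',v\}$ with $(u,v')\ne(u',v')$ requires care, since $a\sim b$ with equal moduli can arise from $\zeta^t$ and $\zeta^{-t}$. I would handle this by noting that the sine equality together with the linear congruence restricts the possible values of $u-u'$ modulo $k$ to a set small enough to check directly, and then invoking the constraints $u,v,u',v'\ne 0$ and $u\ne v$, $u'\ne v'$ to eliminate every crossed possibility. Once the complex injectivity is secured, $\Phi_k\mid f_{(u,v),(u'v')}$ forces $(u,v)=(u',v')$, and the contrapositive gives the lemma.
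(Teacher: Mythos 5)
Your first route is exactly the paper's proof: evaluate $f_{(u,v),(u',v')}$ at $\zeta_k=e^{2\pi i/k}$, take representatives $u,v,u',v'\in\{1,\dots,k-1\}$ (so every $\sin(\pi t/k)>0$ and all arguments lie in a fundamental range, which eliminates the absolute-value and mod-$2k$ branch worries you anticipate), and compare moduli and arguments to get $\sin\frac{\pi u}k\sin\frac{\pi v'}k=\sin\frac{\pi u'}k\sin\frac{\pi v}k$ together with $u+v'=u'+v$. The bookkeeping you leave open closes in one line rather than by a finite check: the product-to-sum identity turns the sine equation into $\cos\frac\pi k(u-v')-\cos\frac\pi k(u+v')=\cos\frac\pi k(u'-v)-\cos\frac\pi k(u'+v)$, the argument equation cancels the two sum terms, giving $u-v'=\pm(u'-v)$, and combined with $u+v'=u'+v$ this forces either $(u,v)=(u',v')$ or $u=v$ (and $u'=v'$), both of which are excluded.
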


\begin{proof} Assume to the contrary that $f_{(u,v),(u'v')}\equiv 0 \pmod{\Phi_k}$. Then
\begin{equation}\label{3.2}
(\zeta_k^u-1)(\zeta_k^{v'}-1)=(\zeta_k^{u'}-1)(\zeta_k^v-1),
\end{equation}
where $\zeta_k=e^{2\pi i/k}$. We treat $u,v,u',v'$ as elements of $\{1,2,\dots,k-1\}$. The polar decomposition of $\zeta_k^u-1$ is
\[
\zeta_k^u-1=2\sin\frac{\pi u}k \cdot e^{i(\frac{\pi u}k+\frac \pi 2)};
\]
see Figure~\ref{F1}. Then \eqref{3.2} is equivalent to the following system:
\begin{align}\label{3.3}
&\sin\frac{\pi u}k\sin\frac{\pi v'}k=\sin\frac{\pi u'}k\sin\frac{\pi v}k,\\
\label{3.4}
&\frac{\pi u}k+\frac{\pi v'}k=\frac{\pi u'}k+\frac{\pi v}k.
\end{align}

\begin{figure}
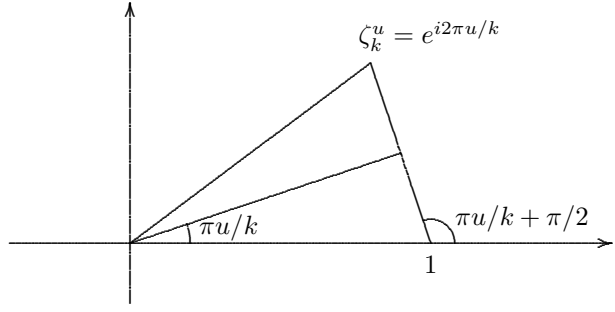

\vskip5mm
\[
\beginpicture
\setcoordinatesystem units <8mm,8mm> point at 0 0
\arrow <5pt> [.2,.67] from -2 0 to 8 0
\arrow <5pt> [.2,.67] from 0 -1 to 0 4
\circulararc 20 degrees from 1 0 center at 0 0
\circulararc 110 degrees from 5.4 0 center at 5 0

\setlinear 
\plot 0 0  4 3  5 0 /
\plot 0 0  4.5 1.5 /

\put {$\zeta_k^u=e^{i2\pi u/k}$} [bl] at 3.8 3.2
\put {$\pi u/k$} [bl] at 1.15 0.05
\put {$1$} [t] at 5 -0.2
\put {$\pi u/k+\pi/2$} [bl] at 5.4 0.2
\endpicture
\]
\caption{\label{F1} Polar decomposition of $\zeta_k^u-1$}
\end{figure}

\noindent
By \eqref{3.3},
\[
\cos\frac\pi k(u-v')-\cos\frac\pi k(u+v')=\cos\frac\pi k(u'-v)-\cos\frac\pi k(u'+v).
\]
Thus by \eqref{3.4},
\[
\cos\frac\pi k(u-v')=\cos\frac\pi k(u'-v),
\]
i.e., 
\begin{equation}\label{3.5}
u-v'=\pm(u'-v).
\end{equation}
Combining \eqref{3.4} and \eqref{3.5} gives
\[
\begin{cases}
u+v'=u'+v,\cr
u-v'=\pm(u'-v).
\end{cases}
\]
It follows that either $(u,v)=(u',v')$ or $(u,u')=(v,v')$, neither of which is possible.
\end{proof}

Let $\mathcal P_k$ denote the set of primes $r>k+1$ such that $r\equiv 1\pmod k$ and $S:\mathcal S_k\to\Bbb Z_r^*\setminus\{-1\}$ is not one-to-one. Recall that the elements of $\mathcal P_k$ are called exceptional primes. Moreover, for each $m\in\Bbb Z$, let $\mathcal P_k(m)$ be the set of primes divisors $r$ of $m$ such that $r>k+1$ and $r\equiv 1\pmod k$.

\begin{thm}\label{T3.2}
We have 
\begin{equation}\label{3.6}
\mathcal P_k=\mathcal P_k\biggl(\,\prod_{\substack{(u,v),(u',v')\in\mathcal S_k\cr (u,v)\ne(u',v')}}\text{\rm Res}(f_{(u,v),(u',v')},\Phi_k)\biggr),
\end{equation}
where $\text{\rm Res}(\cdot\, ,\,\cdot)$ is the resultant.
\end{thm}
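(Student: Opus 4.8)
The plan is to show that the membership conditions defining $\mathcal P_k$ and the right-hand side of \eqref{3.6} describe the same primes. Writing $\Pi$ for the displayed product of resultants, both $\mathcal P_k$ and $\mathcal P_k(\Pi)$ impose the same side conditions $r>k+1$ and $r\equiv1\pmod k$, so for such $r$ I only need to match the condition that $S$ fails to be one-to-one against the condition $r\mid\Pi$. Before doing so I would record that each factor of $\Pi$ is a nonzero integer: since $\Phi_k$ is irreducible over $\Bbb Q$, Lemma~\ref{L3.1} says that $f_{(u,v),(u',v')}$ and $\Phi_k$ are coprime in $\Bbb Q[X]$, whence $\text{Res}(f_{(u,v),(u',v')},\Phi_k)\ne0$. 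Thus $\Pi\ne0$ and $\mathcal P_k(\Pi)$ is a well-defined finite set, which already delivers the finiteness asserted in the paper's main claim.

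The heart of the argument is the reduction of each resultant modulo $r$. Because $\Phi_k$ is monic with roots $\zeta_k^l$, $l\in\Bbb Z_k^\times$, I have the product formula $\text{Res}(f_{(u,v),(u',v')},\Phi_k)=\pm\prod_{l\in\Bbb Z_k^\times}f_{(u,v),(u',v')}(\zeta_k^l)$, an identity in $\Bbb Z[\zeta_k]$ whose left-hand side lies in $\Bbb Z$. Since $r\equiv1\pmod k$ and $r\nmid k$, the prime $r$ splits completely in $\Bbb Q(\zeta_k)$; I would choose a prime $\mathfrak r$ above $r$ so that the reduction $\Bbb Z[\zeta_k]\to\Bbb Z[\zeta_k]/\mathfrak r\cong\Bbb Z_r$ sends $\zeta_k$ to the fixed generator $\omega$ of $K$, under which $\overline\Phi_k=\prod_{l\in\Bbb Z_k^\times}(X-\omega^l)$ as already noted. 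Applying this ring homomorphism to the product formula gives $\text{Res}(f_{(u,v),(u',v')},\Phi_k)\equiv\pm\prod_{l\in\Bbb Z_k^\times}\overline f_{(u,v),(u',v')}(\omega^l)\pmod r$. As $\Bbb Z_r$ is a field, I conclude that $r\mid\text{Res}(f_{(u,v),(u',v')},\Phi_k)$ if and only if $\overline f_{(u,v),(u',v')}(\omega^l)=0$ for some $l\in\Bbb Z_k^\times$.

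Finally I would translate this back to the failure of injectivity of $S$, using that this property is independent of the chosen generator of $K$: if $\eta=\omega^s$ with $\gcd(s,k)=1$, then the map built from $\eta$ sends $(u,v)$ to $S(su,sv)$, and $(u,v)\mapsto(su,sv)$ permutes $\mathcal S_k$, so $S$ is one-to-one for one generator exactly when it is for all of them. By \eqref{3.1}, the equation $\overline f_{(u,v),(u',v')}(\omega^l)=0$ says precisely that the map attached to the generator $\omega^l$ identifies the distinct pairs $(u,v)$ and $(u',v')$. Hence $r\mid\Pi$ iff some such map (for some generator $\omega^l$) fails to be one-to-one iff the fixed $S$ fails to be one-to-one iff $r\in\mathcal P_k$. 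Combining this with the matching side conditions proves \eqref{3.6}.

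I expect the main obstacle to be the resultant-reduction step: arguing compatibility of the resultant with reduction modulo $r$ without being derailed by a possible drop in $\deg f_{(u,v),(u',v')}$. Routing the reduction through $\Bbb Z[\zeta_k]/\mathfrak r$ and applying it to the fixed product identity sidesteps the leading-coefficient bookkeeping, since one only ever pushes a single algebraic identity through a ring homomorphism. The one further point needing care is the generator-independence of injectivity, which is the bridge that promotes a root at some $\omega^l$ to the failure of the specific $S$ used to define $\mathcal P_k$.
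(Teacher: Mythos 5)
Your proposal is correct and takes essentially the same route as the paper: both arguments identify $r\mid\Pi$ with the existence of a common root $\omega^l$ ($l\in\Bbb Z_k^\times$) of $\overline{f}_{(u,v),(u',v')}$ and $\overline{\Phi}_k$, and both transfer from the generator $\omega^l$ back to $\omega$ via the re-indexing $(u,v)\mapsto(lu,lv)$ of $\mathcal S_k$ (what you call generator-independence, and what the paper writes as $\overline{f}_{(lu,lv),(lu',lv')}(\omega)=0$). The only real difference is bookkeeping: you justify compatibility of the resultant with reduction modulo $r$ by pushing the product formula through a prime of $\Bbb Z[\zeta_k]$ above $r$, whereas the paper directly invokes $\overline{\text{Res}(f_{(u,v),(u',v')},\Phi_k)}=\text{Res}(\overline{f}_{(u,v),(u',v')},\overline{\Phi}_k)$, which is legitimate here since $\Phi_k$ is monic.
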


\begin{rmk}\label{R3.3}\rm
By Lemma~\ref{L3.1}, the product in \eqref{3.6} is a nonzero integer. Hence it follows from \eqref{3.6} that $|\mathcal P_k|<\infty$.
\end{rmk}

\begin{proof}[Proof of Theorem~\ref{T3.2}] Let $\mathcal R$ denote the right side of \eqref{3.6}.

First assume that $r\in\mathcal P_k$. Then there exist $(u,v),(u',v')\in\mathcal S_k$ such that $(u,v)\ne (u',v')$ and $\overline{f}_{(u,v),(u',v')}(\omega)=0$, where $\overline{(\ )}$ is the reduction from $\Bbb Z[X]$ to $\Bbb Z_r[X]$. Then
\[
\overline{\text{Res}(f_{(u,v),(u',v')},\Phi_k)}=\text{Res}(\overline{f}_{(u,v),(u',v')},\overline{\Phi}_k)=0,
\]
and hence $r\in\mathcal R$.

Next assume that $r\in\mathcal R$. Then there exist $(u,v),(u',v')\in\mathcal S_k$ such that $(u,v)\ne(u',v')$ and $\text{Res}(\overline{f}_{(u,v),(u',v')},\overline{\Phi}_k)=0$. It follows that $\overline{f}_{(u,v),(u',v')}(\omega^l)=0$ for some $l\in\Bbb Z_k^\times$. Then $\overline{f}_{(lu,lv),(lu',lv')}(\omega)=0$, where $(lu,lv),(lu',lv')\in\mathcal S_k$ and $(lu,lv)\ne(lu',lv')$. 
\end{proof}

Since $f_{(u,v),(u',v')}=-f_{(u',v'),(u,v)}=-f_{(v,u),(v',u')}$, for the product in \eqref{3.6}, we only have to consider those $(u,v),(u',v')\in\mathcal S_k$ such that $u\le\min\{v,u',v'\}$. Moreover, we may further exclude those $(u,v), (u',v')$ with $u=u'$ or $v=v'$. In fact, if say $v=v'$, then for any prime $r\equiv 1\pmod k$, $\overline{f}_{(u,v),(u',v')}(\omega^l)=(\omega^{lu'}-\omega^{lu})(1-\omega^{lv})$ is never $0$ for all $l\in\Bbb Z_k^\times$. Hence
\begin{equation}\label{3.7}
\text{Res}(f_{(u,v),(u',v')},\Phi_k)\not\equiv 0\pmod r.
\end{equation}
If one prefers a more direct argument for \eqref{3.7}, observe that
\begin{align}\label{3.8}
&\text{Res}(f_{(u,v),(u',v')},\Phi_k)\cr
=\,&\text{Res}\bigl(-X^u(X^{u'-u}-1)(X^v-1),\Phi_k\bigr)\cr
=\,&\pm\prod_{l\in\Bbb Z_k^\times}(\zeta_k^{(u'-u)l}-1)(\zeta_k^{vl}-1)\cr
=\,&\pm\Phi_{k/\text{gcd}(k,u'-u)}(1)^{\text{gcd}(k,u'-u)}\Phi_{k/\text{gcd}(k,v)}(1)^{\text{gcd}(k,v)}
\end{align}
For each integer $m>1$, $\Phi_m\mid 1+X+\cdots+X^{m-1}$ and $\Phi_m(1)\mid m$. Therefore by \eqref{3.8}, $\text{Res}(f_{(u,v),(u',v')},\Phi_k)$ does not have any prime divisor $>k$.

We conclude that 
\begin{equation}\label{3.9}
\mathcal P_k=\mathcal P_k\biggl(\,\prod_{\substack{(u,v),(u',v')\in\mathcal S_k\cr v,u'>u,\, v'\ge u,\, v\ne v'}}\text{Res}(f_{(u,v),(u',v')},\Phi_k)\biggr).
\end{equation}
When $k\le 3$, $|\mathcal S_k|\le 1$, so the product in \eqref{3.9} is an empty one and hence $\mathcal P_k=\emptyset$. The sets $\mathcal P_k$, $4\le k\le 20$, are given in Table~\ref{Tb1} in the next section.


\section{Computation of $C(n,k;q)$ for Exceptional Primes}

\begin{algo}\label{A4.1}\rm
Given an integer $k>0$ and an exceptional prime $r=nk+1\in\mathcal P_k$, the following steps produce the complexity $C(n,k;q)$. 
\begin{labeling}{\hspace{13mm}}
\item[{\bf Step 1.}]
Find $\omega\in\Bbb Z_r^*$ with $o(\omega)=k$. Compute the multiset $\frak S=\{S(u,v):(u,v)\in\mathcal S_k\}$. Each element of $\frak S$ is an integer in $\{1,2,\dots,r-2\}$.

\medskip

\item[{\bf Step 2.}]
Tally the multiset $\frak S$ to determine the multiset $\frak M$ of the multiplicities of the elements of $\frak S$ and the multiset $\frak M_*$ of the multiplicities of the elements in $\frak S$ that belong to $(-1)^k\langle \omega\rangle$. More precisely, if 
\[
\frak S=\bigl\{\underbrace{x_1,\dots,x_1}_{m_1},\dots,\underbrace{x_a,\dots,x_a}_{m_a},\dots,\underbrace{x_b,\dots,x_b}_{m_b}\bigr\},
\]
where $x_1,\dots,x_b$ are distinct and $\{x_1,\dots,x_b\}\cap(-1)^k\langle\omega\rangle=\{x_1,\dots,x_a\}$, then $\frak M=\{m_1,\dots,m_b\}$ and $\frak M_*=\{m_1,\dots,m_a\}$.

\medskip

\item[{\bf Step 3.}] 
For each $2\le \tau\le k$, compute
\begin{gather}\label{4.1}
a(\tau)=\frac 1\tau\cdot(\text{the multiplicity of $\tau-1$ in $\frak M$}),\\
\label{4.2}
a_*(\tau)=\frac 1\tau\cdot(\text{the multiplicity of $\tau-1$ in $\frak M_*$}).
\end{gather}
Also compute
\begin{gather}\label{4.3}
a(1)=nk-1-\sum_{\tau=2}^k\tau a(\tau),\qquad a(0)=n^2-\sum_{\tau=1}^ka(\tau),\\
\label{4.4}
a_*(1)=k-1-\sum_{\tau=2}^k\tau a_*(\tau),\qquad a_*(0)=n-\sum_{\tau=1}^ka_*(\tau).
\end{gather}

\medskip

\item[{\bf Step 4.}]
Compute 
\begin{equation}\label{4.5}
C(n,k;q)=n^2-\sum_{\tau\equiv 0\,(\text{mod}\, p)}\bigl(a(\tau)-a_*(\tau)\bigr)-\sum_{\tau\equiv k\,(\text{mod}\, p)}a_*(\tau).
\end{equation}
\end{labeling}
\end{algo}

\noindent{\bf Note.} In the above, \eqref{4.1} -- \eqref{4.4} are from Proposition~\ref{P2.6}, and \eqref{4.5} is \eqref{2.9-2}. The algorithm works for all primes $r=nk+1$, but it is not necessary if $r\notin\mathcal P_k$ because of Theorem~\ref{T1.1}.

The above algorithm is applied to $4\le k\le 20$ and $r\in\mathcal P_k$. The sequences $a(\tau)$ and $a_*(\tau)$ are given in Table~\ref{Tb1}. Step~4 is easy computation but the results are too lengthy to be included. Instead, we use examples to exhibit the formulas for $C(n,k;q)$ for a few pairs of parameters $(n,k)$.

\begin{exmp}\label{E4.1}\rm
Let $k=6$, $n=3$, $r=19$. The nonzero terms of the sequences $a(\tau)$ and $a_*(\tau)$ are
\begin{align*}
&a(1)=3,\quad a(2)=4,\quad a(3)=2,\cr
&a_*(1)=1,\quad a_*(2)=2.
\end{align*}
By \eqref{4.5},
\begin{equation}\label{4.6}
C(3,6;q)=
\begin{cases}
5&\text{if}\ p=2,\cr
7&\text{if}\ p=3,\cr
8&\text{if}\ p=5,\cr
9&\text{if}\ p>5.
\end{cases}
\end{equation}
Note that \eqref{4.6} is different from the result in \cite[Theorem~3.14]{Christopoulou-Garefalakis-Panario-Thomson-DCC-2012}. It appears that the exceptional prime $r=19$ for $k=6$ was not detected in \cite{Christopoulou-Garefalakis-Panario-Thomson-DCC-2012}.
\end{exmp}

\begin{exmp}\label{E4.2}\rm
Let $k=7$, $n=6$, $r=43$. The nonzero terms of the sequences $a(\tau)$ and $a_*(\tau)$ are
\begin{align*}
&a(0)=9,\quad a(1)=14,\quad a(2)=12,\quad a(3)=1,\cr
&a_*(0)=2,\quad a_*(1)=2,\quad a_*(2)=2.
\end{align*}
By \eqref{4.5},
\begin{equation}\label{4.7}
C(6,7;q)=
\begin{cases}
26&\text{if}\ p=3,\cr
27&\text{if}\ p=5,7,\cr
29&\text{if}\ p>7.
\end{cases}
\end{equation}
The prime $p=2$ is not included in \eqref{4.7} since the order of $2$ in $\Bbb Z_{43}^*$ is $14$ and $\text{gcd}(nk/14,n)\ne 1$. It appears that the exceptional prime $r=43$ for $k=7$ was not detected in \cite[Theorem~10]{Liao-Hu-AMS-2014}.
\end{exmp}

\begin{exmp}\label{E4.3}\rm
Let $k=20$, $n=1166$, $r=23321$. The nonzero terms of the sequences $a(\tau)$ and $a_*(\tau)$ are
\begin{align*}
&a(0)=1336402,\quad a(1)=22995,\quad a(2)=153,\quad a(3)=6,\cr
&a_*(0)=1156,\quad a_*(1)=1,\quad a_*(2)=9.
\end{align*}
By \eqref{4.5},
\begin{equation}\label{4.8}
C(1166,20;q)=
\begin{cases}
24295&\text{if}\ p=3,\cr
24310&\text{if}\ p=17\ \text{or}\ p>19.
\end{cases}
\end{equation}
The primes $p=2,5,7,11,13,19$ are not included in \eqref{4.8} since $\text{gcd}(nk/e(p,r),n)\ne 1$ for these primes $p$, where $e(p,r)$ is the order of $p$ in $\Bbb Z_r^*$. \end{exmp}

Table~\ref{Tb1} suggests that for every prime $r=2k+1$, $r\in\mathcal P_k$. Moreover, in this case, the nonzero terms of $a(\tau)$ and $a_*(\tau)$ are 
\begin{equation}\label{4.5-1}
\begin{cases}
a(k/2-1)=1,\quad a(k/2)=3,\cr
a_*(k/2-1)=1,\quad a_*(k/2)=1
\end{cases}\quad\text{if $k$ is even},
\end{equation}
\begin{equation}\label{4.5-2}
\begin{cases}
a((k-1)/2)=3,\quad a((k+1)/2)=1,\cr
a_*((k-1)/2)=2
\end{cases}\quad\text{if $k$ is odd}.
\end{equation}
The above are indeed correct formulas and they follow from \cite[\S6]{Dickson-AJM-1935}. The following is a proof using the notation of the present paper.

Let $(u,v)\in\mathcal S_k$ and $z\in\Bbb Z_r\setminus\{0,-1\}$. Write $\omega^u=x^2$ and $\omega^v=y^2$, where $x,y\in\Bbb Z_r\setminus\{0,\pm1\}$. Then $S(u,v)=z$ if and only if
\[
\frac{x^2-1}{y^2-1}=1+z^{-1},
\]
i.e.,
\begin{equation}\label{4.5-3}
x^2-\lambda y^2=1-\lambda,
\end{equation}
where $\lambda=1+z^{-1}\in\Bbb Z_r\setminus\{0,1\}$. By \cite[Theorem~6.26]{Lidl-Niederreiter-1997},
\[
|\{(x,y)\in\Bbb Z_r^2:x^2-\lambda y^2=1-\lambda\}|=r-\eta(\lambda),
\]
where $\eta$ is the quadratic character of $\Bbb Z_r$. It follows that 
\begin{align}\label{4.5-4}
&\bigl|\bigl\{(x,y): x,y\in\Bbb Z_r\setminus\{0,\pm1\},\ x^2-\lambda y^2=1-\lambda\bigr\}\bigr|\cr
=\,&r-\eta(\lambda)-4-(1+\eta(1-\lambda))-(1+\eta(1-\lambda^{-1}))\cr
=\,&r-6-\eta(\lambda)-\eta(1-\lambda)-\eta(1-\lambda^{-1}).
\end{align}

First assume that $k$ is even. Then $\eta(\lambda)+\eta(1-\lambda)+\eta(1-\lambda^{-1})=-1$ or $3$ since the left side of \eqref{4.5-4} is $\equiv 0\pmod 4$. Let
\[
b_i=\bigl|\big\{\lambda\in\Bbb Z_r\setminus\{0,1\}:\eta(\lambda)+\eta(1-\lambda)+\eta(1-\lambda^{-1})=i\bigr\}\bigr|,\quad i=-1,3.
\]
Then
\begin{equation}\label{4.5-5}
b_{-1}+b_3=r-2
\end{equation}
and    
\begin{equation}\label{4.5-6}
-b_{-1}+3b_3=\sum_{\lambda\in\Bbb Z_r\setminus\{0,1\}}\bigl(\eta(\lambda)+\eta(1-\lambda)+\eta(1-\lambda^{-1})\bigr)=-3.
\end{equation}
Combining \eqref{4.5-5} and \eqref{4.5-6} gives
\[
b_{-1}=\frac 34(r-1),\quad b_3=\frac 14(r-5).
\]
For $i=-1,3$, by \eqref{4.5-4},
\[
\Bigl|\Bigl\{z\in\Bbb Z_r\setminus\{0,-1\}:|S^{-1}(z)|=\frac 14(r-6-i)\Bigr\}\Bigr|=b_i.
\]
Hence
\[
a((r-6-i)/4+1)=\frac 1{(r-6-i)/4+1}b_i,
\]
i.e., $a(k/2-1)+1)=1$ and $a(k/2)=3$.

To determine $a_*((r-6-i)/4+1)$ with $i=-1,3$, let
\begin{align*}
b_i^*\,&=\bigl|\bigl\{\lambda\in\Bbb Z_r\setminus\{0,1\}:\eta(\lambda-1)=1,\ \eta(\lambda)+\eta(1-\lambda)+\eta(1-\lambda^{-1})=i\bigr\}\bigr|\cr
&=\bigl|\bigl\{\lambda\in\Bbb Z_r\setminus\{0,1\}:\eta(\lambda-1)=1,\ \eta(\lambda)=(i-1)/2\bigr\}\bigr|.
\end{align*}
In the above, let $1-\lambda=x^2$ and $\lambda=\epsilon y^2$, where $x,y\in\Bbb Z_r^*$ and   $\epsilon\in \Bbb Z$ is a fixed element such that $\eta(\epsilon)=(i-1)/2$. Then
\begin{align*}
b_i^*\,&=\frac 14|\{(x,y)\in\Bbb Z_r^2:x^2\ne 0,1,\ x^2+\epsilon y^2=1\}|\cr
&=\frac 14\bigl(r-\eta(-\epsilon)-(1+\eta(\epsilon))-2\bigr)\kern2cm \text{(by \cite[Theorem~6.26]{Lidl-Niederreiter-1997})}\cr
&=\frac 14(r-3-2\eta(\epsilon))\cr
&=\frac 14(r-2-i).
\end{align*}
Thus
\[
a_*((r-6-i)/4+1)=\frac1{(r-6-i)/4+1}b_i^*=1,
\]
i.e., $a_*(k/2-1)=a_*(k/2)=1$.

Next assume that $k$ is odd. Then in \eqref{4.5-4}, $\eta(\lambda)+\eta(1-\lambda)+\eta(1-\lambda^{-1})=-3$ or $1$. Let 
\[
c_i=\bigl|\bigl\{\lambda\in\Bbb Z_r\setminus\{0,1\}:\eta(\lambda)+\eta(1-\lambda)+\eta(1-\lambda^{-1})=i\bigr\}\bigr|,\quad i=-3,1.
\]
Then
\[
\begin{cases}
c_{-3}+c_1=r-2,\cr
-3c_{-3}+c_1=-3,
\end{cases}
\]
which gives 
\[
c_{-3}=\frac 14(r+1),\quad c_1=\frac 34(r-3).
\]
We have
\[
a((r-6-i)/4+1)=\frac 1{(r-6-i)/4+1}c_i,
\]
i.e., $a((k-1)/2)=3$, $a((k+1)/2)=1$. To determine $a_*((r-6-i)/4+1)$ with $i=-3,1$, let
\[
c_i^*=\bigl|\bigl\{\lambda\in\Bbb Z_r\setminus\{0,1\}:\eta(\lambda-1)=-1,\ \eta(\lambda)+\eta(1-\lambda)+\eta(1-\lambda^{-1})=i\bigr\}\bigr|.
\]
Note that $\eta(\lambda-1)=-1$ implies that $\eta(\lambda)+\eta(1-\lambda)+\eta(1-\lambda^{-1})=1$. Hence
\[
c_i^*=
\begin{cases}
0&\text{if}\ i=-3,\vspace{2mm}\cr
|\{\lambda\in\Bbb Z_r\setminus\{0,1\}:\eta(\lambda-1)=-1\}|=\displaystyle\frac{r-3}2&\text{if}\ i=1.
\end{cases}
\]
Thus
\[
a_*((r-6-i)/4+1)=\frac1{(r-6-i)/4+1}c_i^*,
\]
i.e., $a_*((k-1)/2)=2$ and $a_*((k+1)/2)=0$.

\begin{table}
\caption{Values of $a(\tau)$ and $a_*(\tau)$ for $4\le k\le 20$, $r=nk+1\in\mathcal P_k$}\label{Tb1}
\vskip-5mm
\[
\begin{tabular}{cc|ccccccccccc}
\multicolumn{13}{l}{The top sequence is $a(\tau)$, and the bottom one is $a_*(\tau)$, $0\le \tau\le 10$; only nonzero}\\ 
\multicolumn{13}{l}{terms are listed}\vspace{2mm}\\ 
\hline
$k$ & $n$ &\kern4mm  0\kern4mm  &\kern3mm  1\kern3mm  &\kern3mm  2\kern3mm  &\kern2mm 3\kern2mm &\kern2mm 4\kern2mm &\kern2mm 5\kern2mm &\kern2mm 6\kern2mm &\kern2mm 7\kern2mm &\kern2mm 8\kern2mm &\kern2mm 9\kern2mm &\kern2mm 10\kern2mm \\ \hline 
\hline
4 & -- & \\ \hline
5&2&&&3&1&&\\
&&&&2&&&\\ \hline
6&2&&&1&3&&&\\
&&&&1&1&&&\\ \hline
6&3&&3&4&2&&&\\
&&&1&2&&&&\\ \hline
7&4&1&6&6&3&&&&\\
&&&2&2&&&&&\\ \hline
7&6&9&14&12&1&&&&\\
&&2&2&2&&&&&\\ \hline
8&2&&&&1&3&&&&\\
&&&&&1&1&&&&\\ \hline
8&5&4&6&12&3&&&&&\\
&&2&&2&1&&&&&\\ \hline
9&2&&&&&3&1&&&&\\
&&&&&&2&&&&&\\ \hline
9&4&&3&10&&3&&&&&\\
&&&&4&&&&&&&\\ \hline
9&8&15&33&10&6&&&&&&\\
&&&8&&&&&&&&\\ \hline
9&12&62&60&19&3&&&&&&\\
&&6&4&2&&&&&&&\\ \hline
9&14&96&78&19&3&&&&&&\\
&&8&4&2&&&&&&&\\ \hline
9&30&657&219&22&2&&&&&&\\
&&22&8&&&&&&&&\\ \hline
10&3&&&3&1&5&&&&&&\\
&&&&1&1&1&&&&&&\\ \hline
10&4&1&&9&3&3&&&&&&\\
&&1&&1&1&1&&&&&&\\ \hline
10&6&4&12&15&3&2&&&&&&\\
&&2&&3&1&&&&&&&\\ \hline
10&7&10&15&18&6&&&&&&&\\
&&2&1&4&&&&&&&&\\ \hline
10&10&31&45&18&6&&&&&&&\\
&&5&1&4&&&&&&&&\\ \hline
11&2&&&&&&3&1&&&&\\
&&&&&&&2&&&&&\\ \hline
11&6&1&15&15&&5&&&&&&\\
&&&2&4&&&&&&&&\\ \hline
11&8&13&21&27&&3&&&&&&\\
&&2&2&4&&&&&&&&\\ \hline
\end{tabular}
\]
\end{table}

\addtocounter{table}{-1}
\begin{table}
\caption{continued}
\vskip-5mm
\[
\begin{tabular}{cc|ccccccccccc}
\hline
$k$ & $n$ &\kern4mm  0\kern4mm  &\kern3mm  1\kern3mm  &\kern3mm  2\kern3mm  &\kern2mm 3\kern2mm &\kern2mm 4\kern2mm &\kern2mm 5\kern2mm &\kern2mm 6\kern2mm &\kern2mm 7\kern2mm &\kern2mm 8\kern2mm &\kern2mm 9\kern2mm &\kern2mm 10\kern2mm \\ \hline  
\hline
11&18&169&116&36&3&&&&&&&\\
&&10&6&2&&&&&&&&\\ \hline
11&32&712&279&27&6&&&&&&&\\
&&22&10&&&&&&&&&\\ \hline
11&36&943&314&36&3&&&&&&&\\
&&28&6&2&&&&&&&&\\ \hline
11&62&3207&594&42&1&&&&&&&\\
&&54&6&2&&&&&&&&\\ \hline
12&3&&&1&2&3&3&&&&&\\
&&&&1&&1&1&&&&&\\ \hline
12&5&3&&10&9&3&&&&&&\\
&&1&&2&1&1&&&&&&\\ \hline
12&6&5&6&10&15&&&&&&&\\
&&1&&4&1&&&&&&&\\ \hline
12&8&12&21&19&12&&&&&&&\\
&&2&1&5&&&&&&&&\\ \hline
12&9&23&15&37&6&&&&&&&\\
&&3&1&5&&&&&&&&\\ \hline
12&13&63&63&37&6&&&&&&&\\
&&7&1&5&&&&&&&&\\ \hline
12&15&99&75&49&2&&&&&&&\\
&&9&1&5&&&&&&&&\\ \hline
12&16&114&99&37&6&&&&&&&\\
&&10&1&5&&&&&&&&\\ \hline
13&4&&&6&3&6&&1&&&&\\
&&&&2&&2&&&&&&\\ \hline
13&6&1&11&12&6&6&&&&&&\\
&&&2&2&2&&&&&&&\\ \hline
13&10&22&39&30&6&3&&&&&&\\
&&2&4&4&&&&&&&&\\ \hline
13&12&40&63&36&&5&&&&&&\\
&&4&4&4&&&&&&&&\\ \hline
13&24&322&203&48&&3&&&&&&\\
&&16&4&4&&&&&&&&\\ \hline
13&40&1144&396&57&3&&&&&&&\\
&&30&8&2&&&&&&&&\\ \hline
13&46&1579&483&48&6&&&&&&&\\
&&34&12&&&&&&&&&\\ \hline
13&66&3559&743&48&6&&&&&&&\\
&&54&12&&&&&&&&&\\ \hline
13&100&8761&1185&48&6&&&&&&&\\
&&88&12&&&&&&&&&\\ \hline
13&124&13828&1488&57&3&&&&&&&\\
&&114&8&2&&&&&&&&\\ \hline
13&154&21778&1878&57&3&&&&&&&\\
&&144&8&2&&&&&&&&\\ \hline
\end{tabular}
\]
\end{table}


\addtocounter{table}{-1}
\begin{table}
\caption{continued}
\vskip-5mm
\[
\begin{tabular}{cc|ccccccccccc}
\hline
$k$ & $n$ &\kern4mm  0\kern4mm  &\kern3mm  1\kern3mm  &\kern3mm  2\kern3mm  &\kern2mm 3\kern2mm &\kern2mm 4\kern2mm &\kern2mm 5\kern2mm &\kern2mm 6\kern2mm &\kern2mm 7\kern2mm &\kern2mm 8\kern2mm &\kern2mm 9\kern2mm &\kern2mm 10\kern2mm \\ \hline  
\hline
13&210&41436&2600&63&1&&&&&&&\\
&&200&8&2&&&&&&&&\\ \hline
14&2&&&&&&&1&3&&&\\
&&&&&&&&1&1&&&\\ \hline
14&3&&&&1&5&&3&&&&\\
&&&&&1&1&&1&&&&\\ \hline
14&5&1&&12&6&3&3&&&&&\\
&&1&&2&&1&1&&&&&\\ \hline
14&8&7&24&15&15&3&&&&&&\\
&&3&&3&1&1&&&&&&\\ \hline
14&9&18&14&39&7&3&&&&&&\\
&&4&&3&1&1&&&&&&\\ \hline
14&14&67&75&42&12&&&&&&&\\
&&7&1&6&&&&&&&&\\ \hline
14&15&82&87&48&6&2&&&&&&\\
&&8&1&6&&&&&&&&\\ \hline
14&17&118&114&51&3&3&&&&&&\\
&&12&&3&1&1&&&&&&\\ \hline
14&20&187&159&42&12&&&&&&&\\
&&13&1&6&&&&&&&&\\ \hline
14&24&313&197&60&6&&&&&&&\\
&&17&1&6&&&&&&&&\\ \hline
14&27&424&239&60&6&&&&&&&\\
&&20&1&6&&&&&&&&\\ \hline
14&35&808&351&60&6&&&&&&&\\
&&28&1&6&&&&&&&&\\ \hline
14&39&1045&414&57&3&2&&&&&&\\
&&33&&5&1&&&&&&&\\ \hline
14&47&1624&519&60&6&&&&&&&\\
&&40&1&6&&&&&&&&\\ \hline
14&50&1873&561&60&6&&&&&&&\\
&&43&1&6&&&&&&&&\\ \hline
14&75&4648&911&60&6&&&&&&&\\
&&68&1&6&&&&&&&&\\ \hline
14&78&5068&944&69&3&&&&&&&\\
&&72&&5&1&&&&&&&\\ \hline
14&92&7249&1149&60&6&&&&&&&\\
&&85&1&6&&&&&&&&\\ \hline
15&2&&&&&&&&3&1&&\\
&&&&&&&&&2&&&\\ \hline
15&4&&&1&9&3&&3&&&&\\
&&&&&2&2&&&&&&\\ \hline
15&10&21&27&37&12&3&&&&&&\\
&&&6&4&&&&&&&&\\ \hline
15&12&38&48&46&9&3&&&&&&\\
&&4&2&6&&&&&&&&\\ \hline
\end{tabular}
\]
\end{table}


\addtocounter{table}{-1}
\begin{table}
\caption{continued}
\vskip-5mm
\[
\begin{tabular}{cc|ccccccccccc}
\hline
$k$ & $n$ &\kern4mm  0\kern4mm  &\kern3mm  1\kern3mm  &\kern3mm  2\kern3mm  &\kern2mm 3\kern2mm &\kern2mm 4\kern2mm &\kern2mm 5\kern2mm &\kern2mm 6\kern2mm &\kern2mm 7\kern2mm &\kern2mm 8\kern2mm &\kern2mm 9\kern2mm &\kern2mm 10\kern2mm \\ \hline  
\hline
15&14&54&93&37&6&6&&&&&&\\
&&6&4&2&2&&&&&&&\\ \hline
15&16&93&99&55&6&3&&&&&&\\
&&6&6&4&&&&&&&&\\ \hline
15&18&125&147&37&12&3&&&&&&\\
&&8&6&4&&&&&&&&\\ \hline
15&22&234&180&63&6&&1&&&&&\\
&&14&4&2&2&&&&&&&\\ \hline
15&28&453&246&82&3&&&&&&&\\
&&16&10&2&&&&&&&&\\ \hline
15&36&833&399&55&6&3&&&&&&\\
&&26&6&4&&&&&&&&\\ \hline
15&38&954&423&55&12&&&&&&&\\
&&24&14&&&&&&&&&\\ \hline
15&42&1214&483&55&12&&&&&&&\\
&&28&14&&&&&&&&&\\ \hline
15&50&1839&576&82&3&&&&&&&\\
&&38&10&2&&&&&&&&\\ \hline
15&88&6513&1146&82&3&&&&&&&\\
&&76&10&2&&&&&&&&\\ \hline
15&92&7173&1206&82&3&&&&&&&\\
&&80&10&2&&&&&&&&\\ \hline
15&98&8214&1323&55&12&&&&&&&\\
&&84&14&&&&&&&&&\\ \hline
15&144&18662&1995&73&6&&&&&&&\\
&&130&14&&&&&&&&&\\ \hline
15&150&20336&2085&73&6&&&&&&&\\
&&136&14&&&&&&&&&\\ \hline
15&154&21492&2145&73&6&&&&&&&\\
&&140&14&&&&&&&&&\\ \hline
15&186&31892&2625&73&6&&&&&&&\\
&&172&14&&&&&&&&&\\ \hline
15&208&40230&2955&73&6&&&&&&&\\
&&194&14&&&&&&&&&\\ \hline
15&242&55020&3465&73&6&&&&&&&\\
&&228&14&&&&&&&&&\\ \hline
15&270&68940&3873&85&2&&&&&&&\\
&&256&14&&&&&&&&&\\ \hline
15&380&138786&5535&73&6&&&&&&&\\
&&366&14&&&&&&&&&\\ \hline
15&388&144810&5655&73&6&&&&&&&\\
&&374&14&&&&&&&&&\\ \hline
15&392&147870&5715&73&6&&&&&&&\\
&&378&14&&&&&&&&&\\ \hline
15&458&202980&6705&73&6&&&&&&&\\
&&444&14&&&&&&&&&\\ \hline
\end{tabular}
\]
\end{table}


\addtocounter{table}{-1}
\begin{table}
\caption{continued}
\vskip-5mm
\[
\begin{tabular}{cc|ccccccccccc}
\hline
$k$ & $n$ &\kern4mm  0\kern4mm  &\kern3mm  1\kern3mm  &\kern3mm  2\kern3mm  &\kern2mm 3\kern2mm &\kern2mm 4\kern2mm &\kern2mm 5\kern2mm &\kern2mm 6\kern2mm &\kern2mm 7\kern2mm &\kern2mm 8\kern2mm &\kern2mm 9\kern2mm &\kern2mm 10\kern2mm \\ \hline  
\hline
15&3084&9464886&46083&85&2&&&&&&&\\
&&3070&14&&&&&&&&&\\ \hline
16&6&1&9&3&12&11&&&&&&\\
&&1&1&1&&3&&&&&&\\ \hline
16&7&4&6&21&12&3&3&&&&&\\
&&2&&3&&1&1&&&&&\\ \hline
16&12&37&44&42&21&&&&&&&\\
&&5&&6&1&&&&&&&\\ \hline
16&15&79&65&69&12&&&&&&&\\
&&7&1&7&&&&&&&&\\ \hline
16&16&90&90&66&7&3&&&&&&\\
&&10&&4&1&1&&&&&&\\ \hline
16&21&193&176&60&9&3&&&&&&\\
&&15&&4&1&1&&&&&&\\ \hline
16&22&226&177&69&12&&&&&&&\\
&&14&1&7&&&&&&&&\\ \hline
16&25&325&207&87&6&&&&&&&\\
&&17&1&7&&&&&&&&\\ \hline
16&27&397&239&87&6&&&&&&&\\
&&19&1&7&&&&&&&&\\ \hline
16&28&436&255&87&6&&&&&&&\\
&&20&1&7&&&&&&&&\\ \hline
16&36&820&383&87&6&&&&&&&\\
&&28&1&7&&&&&&&&\\ \hline
16&37&877&399&87&6&&&&&&&\\
&&29&1&7&&&&&&&&\\ \hline
16&40&1060&447&87&6&&&&&&&\\
&&32&1&7&&&&&&&&\\ \hline
16&55&2245&687&87&6&&&&&&&\\
&&47&1&7&&&&&&&&\\ \hline
16&76&4660&1023&87&6&&&&&&&\\
&&68&1&7&&&&&&&&\\ \hline
16&132&15412&1919&87&6&&&&&&&\\
&&124&1&7&&&&&&&&\\ \hline
16&133&15661&1935&87&6&&&&&&&\\
&&125&1&7&&&&&&&&\\ \hline
17&6&&5&15&&15&&1&&&&\\
&&&&4&&2&&&&&&\\ \hline
17&8&7&12&21&15&9&&&&&&\\
&&&4&2&&2&&&&&&\\ \hline
17&14&48&78&60&3&6&&1&&&&\\
&&6&4&2&&2&&&&&&\\ \hline
17&18&118&125&66&12&3&&&&&&\\
&&6&8&4&&&&&&&&\\ \hline
17&24&268&227&66&12&3&&&&&&\\
&&12&8&4&&&&&&&&\\ \hline
\end{tabular}
\]
\end{table}


\addtocounter{table}{-1}
\begin{table}
\caption{continued}
\vskip-5mm
\[
\begin{tabular}{cc|ccccccccccc}
\hline
$k$ & $n$ &\kern4mm  0\kern4mm  &\kern3mm  1\kern3mm  &\kern3mm  2\kern3mm  &\kern2mm 3\kern2mm &\kern2mm 4\kern2mm &\kern2mm 5\kern2mm &\kern2mm 6\kern2mm &\kern2mm 7\kern2mm &\kern2mm 8\kern2mm &\kern2mm 9\kern2mm &\kern2mm 10\kern2mm \\ \hline  
\hline
17&26&331&267&66&6&6&&&&&&\\
&&16&6&2&2&&&&&&&\\ \hline
17&36&790&413&84&6&3&&&&&&\\
&&24&8&4&&&&&&&&\\ \hline
17&38&904&447&84&6&3&&&&&&\\
&&26&8&4&&&&&&&&\\ \hline
17&54&2104&719&84&6&3&&&&&&\\
&&42&8&4&&&&&&&&\\ \hline
17&56&2290&753&84&6&3&&&&&&\\
&&44&8&4&&&&&&&&\\ \hline
17&60&2680&839&66&12&3&&&&&&\\
&&48&8&4&&&&&&&&\\ \hline
17&80&5155&1137&102&6&&&&&&&\\
&&64&16&&&&&&&&&\\ \hline
17&84&5734&1227&90&&5&&&&&&\\
&&72&8&4&&&&&&&&\\ \hline
17&98&8047&1461&84&12&&&&&&&\\
&&82&16&&&&&&&&&\\ \hline
17&138&16807&2139&90&6&2&&&&&&\\
&&122&16&&&&&&&&&\\ \hline
17&168&25480&2639&102&&3&&&&&&\\
&&156&8&4&&&&&&&&\\ \hline
17&180&29455&2837&102&6&&&&&&&\\
&&164&16&&&&&&&&&\\ \hline
17&194&34453&3075&102&6&&&&&&&\\
&&178&16&&&&&&&&&\\ \hline
17&204&38263&3245&102&6&&&&&&&\\
&&188&16&&&&&&&&&\\ \hline
17&210&40648&3338&111&3&&&&&&&\\
&&196&12&2&&&&&&&&\\ \hline
17&216&43093&3467&84&12&&&&&&&\\
&&200&16&&&&&&&&&\\ \hline
17&344&112603&5625&102&6&&&&&&&\\
&&328&16&&&&&&&&&\\ \hline
17&500&241615&8277&102&6&&&&&&&\\
&&484&16&&&&&&&&&\\ \hline
17&546&288949&9059&102&6&&&&&&&\\
&&530&16&&&&&&&&&\\ \hline
17&644&403903&10725&102&6&&&&&&&\\
&&628&16&&&&&&&&&\\ \hline
17&708&489343&11813&102&6&&&&&&&\\
&&692&16&&&&&&&&&\\ \hline
17&920&830878&15408&111&3&&&&&&&\\
&&906&12&2&&&&&&&&\\ \hline
17&1046&1076452&17550&111&3&&&&&&&\\
&&1032&12&2&&&&&&&&\\ \hline
\end{tabular}
\]
\end{table}


\addtocounter{table}{-1}
\begin{table}
\caption{continued}
\vskip-5mm
\[
\begin{tabular}{cc|ccccccccccc}
\hline
$k$ & $n$ &\kern4mm  0\kern4mm  &\kern3mm  1\kern3mm  &\kern3mm  2\kern3mm  &\kern2mm 3\kern2mm &\kern2mm 4\kern2mm &\kern2mm 5\kern2mm &\kern2mm 6\kern2mm &\kern2mm 7\kern2mm &\kern2mm 8\kern2mm &\kern2mm 9\kern2mm &\kern2mm 10\kern2mm \\ \hline  
\hline
17&1140&1280335&19157&102&6&&&&&&&\\
&&1124&16&&&&&&&&&\\ \hline
17&1484&2177146&24996&111&3&&&&&&&\\
&&1470&12&2&&&&&&&&\\ \hline
17&1548&2370106&26084&111&3&&&&&&&\\
&&1534&12&2&&&&&&&&\\ \hline
17&2054&4184113&34695&102&6&&&&&&&\\
&&2038&16&&&&&&&&&\\ \hline
17&2570&6561330&43452&117&1&&&&&&&\\
&&2556&12&2&&&&&&&&\\ \hline
18&2&&&&&&&&&1&3&\\
&&&&&&&&&&1&1&\\ \hline
18&4&&&3&&3&7&3&&&&\\
&&&&1&&1&1&1&&&&\\ \hline
18&6&3&&4&23&3&&3&&&&\\
&&1&&2&1&1&&1&&&&\\ \hline
18&7&3&6&16&12&9&3&&&&&\\
&&1&&4&&1&1&&&&&\\ \hline
18&9&12&12&31&17&9&&&&&&\\
&&2&&5&1&1&&&&&&\\ \hline
18&10&15&27&28&24&6&&&&&&\\
&&1&1&8&&&&&&&&\\ \hline
18&11&24&30&37&27&3&&&&&&\\
&&4&&5&1&1&&&&&&\\ \hline
18&15&72&57&76&20&&&&&&&\\
&&6&1&8&&&&&&&&\\ \hline
18&17&96&102&73&15&3&&&&&&\\
&&10&&5&1&1&&&&&&\\ \hline
18&21&182&159&82&18&&&&&&&\\
&&12&1&8&&&&&&&&\\ \hline
18&22&213&159&100&12&&&&&&&\\
&&13&1&8&&&&&&&&\\ \hline
18&24&269&195&100&12&&&&&&&\\
&&15&1&8&&&&&&&&\\ \hline
18&27&368&249&100&12&&&&&&&\\
&&18&1&8&&&&&&&&\\ \hline
18&29&444&285&100&12&&&&&&&\\
&&20&1&8&&&&&&&&\\ \hline
18&30&491&285&118&6&&&&&&&\\
&&21&1&8&&&&&&&&\\ \hline
18&32&579&321&118&6&&&&&&&\\
&&23&1&8&&&&&&&&\\ \hline
18&34&675&357&118&6&&&&&&&\\
&&25&1&8&&&&&&&&\\ \hline
18&41&1074&483&118&6&&&&&&&\\
&&32&1&8&&&&&&&&\\ \hline
\end{tabular}
\]
\end{table}


\addtocounter{table}{-1}
\begin{table}
\caption{continued}
\vskip-5mm
\[
\begin{tabular}{cc|ccccccccccc}
\hline
$k$ & $n$ &\kern4mm  0\kern4mm  &\kern3mm  1\kern3mm  &\kern3mm  2\kern3mm  &\kern2mm 3\kern2mm &\kern2mm 4\kern2mm &\kern2mm 5\kern2mm &\kern2mm 6\kern2mm &\kern2mm 7\kern2mm &\kern2mm 8\kern2mm &\kern2mm 9\kern2mm &\kern2mm 10\kern2mm \\ \hline  
\hline
18&42&1142&492&127&3&&&&&&&\\
&&34&&7&1&&&&&&&\\ \hline
18&45&1346&555&118&6&&&&&&&\\
&&36&1&8&&&&&&&&\\ \hline
18&62&2859&861&118&6&&&&&&&\\
&&53&1&8&&&&&&&&\\ \hline
18&81&5234&1203&118&6&&&&&&&\\
&&72&1&8&&&&&&&&\\ \hline
18&121&12594&1923&118&6&&&&&&&\\
&&112&1&8&&&&&&&&\\ \hline
18&126&13743&2001&130&2&&&&&&&\\
&&117&1&8&&&&&&&&\\ \hline
18&127&13974&2031&118&6&&&&&&&\\
&&118&1&8&&&&&&&&\\ \hline
18&321&97398&5511&130&2&&&&&&&\\
&&312&1&8&&&&&&&&\\ \hline
19&10&15&15&48&12&9&&1&&&&\\
&&2&2&4&&2&&&&&&\\ \hline
19&12&28&38&54&15&9&&&&&&\\
&&2&6&2&&2&&&&&&\\ \hline
19&22&193&189&81&18&3&&&&&&\\
&&8&10&4&&&&&&&&\\ \hline
19&24&243&230&93&3&6&&1&&&&\\
&&14&6&2&&2&&&&&&\\ \hline
19&30&460&329&99&6&6&&&&&&\\
&&18&8&2&2&&&&&&&\\ \hline
19&34&643&399&99&12&3&&&&&&\\
&&20&10&4&&&&&&&&\\ \hline
19&40&979&495&117&6&3&&&&&&\\
&&26&10&4&&&&&&&&\\ \hline
19&58&2392&861&99&6&6&&&&&&\\
&&46&8&2&2&&&&&&&\\ \hline
19&78&4741&1217&117&6&3&&&&&&\\
&&64&10&4&&&&&&&&\\ \hline
19&82&5305&1293&117&6&3&&&&&&\\
&&68&10&4&&&&&&&&\\ \hline
19&84&5593&1349&99&12&3&&&&&&\\
&&70&10&4&&&&&&&&\\ \hline
19&94&7183&1539&99&12&3&&&&&&\\
&&80&10&4&&&&&&&&\\ \hline
19&100&8239&1635&117&6&3&&&&&&\\
&&86&10&4&&&&&&&&\\ \hline
19&108&9745&1805&99&12&3&&&&&&\\
&&94&10&4&&&&&&&&\\ \hline
19&118&11821&1977&117&6&3&&&&&&\\
&&104&10&4&&&&&&&&\\ \hline
\end{tabular}
\]
\end{table}


\addtocounter{table}{-1}
\begin{table}
\caption{continued}
\vskip-5mm
\[
\begin{tabular}{cc|ccccccccccc}
\hline
$k$ & $n$ &\kern4mm  0\kern4mm  &\kern3mm  1\kern3mm  &\kern3mm  2\kern3mm  &\kern2mm 3\kern2mm &\kern2mm 4\kern2mm &\kern2mm 5\kern2mm &\kern2mm 6\kern2mm &\kern2mm 7\kern2mm &\kern2mm 8\kern2mm &\kern2mm 9\kern2mm &\kern2mm 10\kern2mm \\ \hline  
\hline
19&120&12259&2015&117&6&3&&&&&&\\
&&106&10&4&&&&&&&&\\ \hline
19&142&17614&2409&135&6&&&&&&&\\
&&124&18&&&&&&&&&\\ \hline
19&204&37882&3605&117&12&&&&&&&\\
&&186&18&&&&&&&&&\\ \hline
19&232&49558&4137&117&12&&&&&&&\\
&&214&18&&&&&&&&&\\ \hline
19&234&50449&4179&123&&5&&&&&&\\
&&220&10&4&&&&&&&&\\ \hline
19&240&53182&4289&117&12&&&&&&&\\
&&222&18&&&&&&&&&\\ \hline
19&244&55048&4347&135&6&&&&&&&\\
&&226&18&&&&&&&&&\\ \hline
19&258&61810&4613&135&6&&&&&&&\\
&&240&18&&&&&&&&&\\ \hline
19&358&121504&6531&117&12&&&&&&&\\
&&340&18&&&&&&&&&\\ \hline
19&360&122902&6567&123&6&2&&&&&&\\
&&342&18&&&&&&&&&\\ \hline
19&402&154108&7367&117&12&&&&&&&\\
&&384&18&&&&&&&&&\\ \hline
19&460&203005&8457&135&&3&&&&&&\\
&&446&10&4&&&&&&&&\\ \hline
19&472&213964&8679&135&6&&&&&&&\\
&&454&18&&&&&&&&&\\ \hline
19&492&232867&9050&144&3&&&&&&&\\
&&476&14&2&&&&&&&&\\ \hline
19&750&548392&13979&117&12&&&&&&&\\
&&732&18&&&&&&&&&\\ \hline
19&778&590650&14493&135&6&&&&&&&\\
&&760&18&&&&&&&&&\\ \hline
19&810&640858&15101&135&6&&&&&&&\\
&&792&18&&&&&&&&&\\ \hline
19&852&709864&15899&135&6&&&&&&&\\
&&834&18&&&&&&&&&\\ \hline
19&1620&2593768&30491&135&6&&&&&&&\\
&&1602&18&&&&&&&&&\\ \hline
19&1678&2783950&31593&135&6&&&&&&&\\
&&1660&18&&&&&&&&&\\ \hline
19&1834&3328858&34557&135&6&&&&&&&\\
&&1816&18&&&&&&&&&\\ \hline
19&2112&4420564&39839&135&6&&&&&&&\\
&&2094&18&&&&&&&&&\\ \hline
19&2128&4488100&40143&135&6&&&&&&&\\
&&2110&18&&&&&&&&&\\ \hline
\end{tabular}
\]
\end{table}


\addtocounter{table}{-1}
\begin{table}
\caption{continued}
\vskip-5mm
\[
\begin{tabular}{cc|ccccccccccc}
\hline
$k$ & $n$ &\kern4mm  0\kern4mm  &\kern3mm  1\kern3mm  &\kern3mm  2\kern3mm  &\kern2mm 3\kern2mm &\kern2mm 4\kern2mm &\kern2mm 5\kern2mm &\kern2mm 6\kern2mm &\kern2mm 7\kern2mm &\kern2mm 8\kern2mm &\kern2mm 9\kern2mm &\kern2mm 10\kern2mm \\ \hline  
\hline
19&2532&6363064&47819&135&6&&&&&&&\\
&&2514&18&&&&&&&&&\\ \hline
19&2824&7921471&53358&144&3&&&&&&&\\
&&2808&14&2&&&&&&&&\\ \hline
19&2892&8308864&54659&135&6&&&&&&&\\
&&2874&18&&&&&&&&&\\ \hline
19&2970&8764621&56132&144&3&&&&&&&\\
&&2954&14&2&&&&&&&&\\ \hline
19&3352&11172364&63399&135&6&&&&&&&\\
&&3334&18&&&&&&&&&\\ \hline
19&3880&14980828&73431&135&6&&&&&&&\\
&&3862&18&&&&&&&&&\\ \hline
19&4402&19294117&83340&144&3&&&&&&&\\
&&4386&14&2&&&&&&&&\\ \hline
19&4852&23449864&91899&135&6&&&&&&&\\
&&4834&18&&&&&&&&&\\ \hline
19&5188&26816920&98283&135&6&&&&&&&\\
&&5170&18&&&&&&&&&\\ \hline
19&5638&31680073&106824&144&3&&&&&&&\\
&&5622&14&2&&&&&&&&\\ \hline
19&6888&47313823&130574&144&3&&&&&&&\\
&&6872&14&2&&&&&&&&\\ \hline
19&7372&54206464&139779&135&6&&&&&&&\\
&&7354&18&&&&&&&&&\\ \hline
19&9198&84428595&174458&150&1&&&&&&&\\
&&9182&14&2&&&&&&&&\\ \hline
20&2&&&&&&&&&&1&3\\
&&&&&&&&&&&1&1\\ \hline
20&3&&&&&&3&1&2&3&&\\
&&&&&&&1&1&&1&&\\ \hline
20&5&1&&&9&9&&6&&&&\\
&&1&&&1&1&&2&&&&\\ \hline
20&9&4&15&39&12&9&&&2&&&\\
&&2&1&3&&3&&&&&&\\ \hline
20&12&25&39&51&18&11&&&&&&\\
&&5&1&3&&3&&&&&&\\ \hline
20&14&43&63&63&18&9&&&&&&\\
&&7&1&3&&3&&&&&&\\ \hline
20&20&136&156&90&9&9&&&&&&\\
&&12&&6&1&1&&&&&&\\ \hline
20&21&163&167&81&30&&&&&&&\\
&&11&1&9&&&&&&&&\\ \hline
20&23&217&189&99&24&&&&&&&\\
&&13&1&9&&&&&&&&\\ \hline
20&26&310&231&117&18&&&&&&&\\
&&16&1&9&&&&&&&&\\ \hline
\end{tabular}
\]
\end{table}


\addtocounter{table}{-1}
\begin{table}
\caption{continued}
\vskip-5mm
\[
\begin{tabular}{cc|ccccccccccc}
\hline
$k$ & $n$ &\kern4mm  0\kern4mm  &\kern3mm  1\kern3mm  &\kern3mm  2\kern3mm  &\kern2mm 3\kern2mm &\kern2mm 4\kern2mm &\kern2mm 5\kern2mm &\kern2mm 6\kern2mm &\kern2mm 7\kern2mm &\kern2mm 8\kern2mm &\kern2mm 9\kern2mm &\kern2mm 10\kern2mm \\ \hline  
\hline
20&27&328&284&105&6&3&3&&&&&\\
&&20&&5&&1&1&&&&&\\ \hline
20&30&454&309&123&12&2&&&&&&\\
&&20&1&9&&&&&&&&\\ \hline
20&32&538&351&117&18&&&&&&&\\
&&22&1&9&&&&&&&&\\ \hline
20&33&583&371&117&18&&&&&&&\\
&&23&1&9&&&&&&&&\\ \hline
20&35&679&411&117&18&&&&&&&\\
&&25&1&9&&&&&&&&\\ \hline
20&38&820&516&90&9&9&&&&&&\\
&&30&&6&1&1&&&&&&\\ \hline
20&41&1021&513&135&12&&&&&&&\\
&&31&1&9&&&&&&&&\\ \hline
20&44&1210&591&117&18&&&&&&&\\
&&34&1&9&&&&&&&&\\ \hline
20&47&1423&651&117&18&&&&&&&\\
&&37&1&9&&&&&&&&\\ \hline
20&51&1741&713&135&12&&&&&&&\\
&&41&1&9&&&&&&&&\\ \hline
20&53&1897&786&108&15&3&&&&&&\\
&&45&&6&1&1&&&&&&\\ \hline
20&59&2470&846&162&3&&&&&&&\\
&&50&&8&1&&&&&&&\\ \hline
20&60&2566&875&153&6&&&&&&&\\
&&50&1&9&&&&&&&&\\ \hline
20&65&3091&975&153&6&&&&&&&\\
&&55&1&9&&&&&&&&\\ \hline
20&66&3202&995&153&6&&&&&&&\\
&&56&1&9&&&&&&&&\\ \hline
20&68&3412&1086&108&15&3&&&&&&\\
&&60&&6&1&1&&&&&&\\ \hline
20&69&3541&1073&135&12&&&&&&&\\
&&59&1&9&&&&&&&&\\ \hline
20&74&4162&1155&153&6&&&&&&&\\
&&64&1&9&&&&&&&&\\ \hline
20&80&4954&1308&126&9&3&&&&&&\\
&&72&&6&1&1&&&&&&\\ \hline
20&81&5107&1295&153&6&&&&&&&\\
&&71&1&9&&&&&&&&\\ \hline
20&86&5842&1395&153&6&&&&&&&\\
&&76&1&9&&&&&&&&\\ \hline
20&87&5989&1433&135&12&&&&&&&\\
&&77&1&9&&&&&&&&\\ \hline
20&93&6943&1571&117&18&&&&&&&\\
&&83&1&9&&&&&&&&\\ \hline
\end{tabular}
\]
\end{table}


\addtocounter{table}{-1}
\begin{table}
\caption{continued}
\vskip-5mm
\[
\begin{tabular}{cc|ccccccccccc}
\hline
$k$ & $n$ &\kern4mm  0\kern4mm  &\kern3mm  1\kern3mm  &\kern3mm  2\kern3mm  &\kern2mm 3\kern2mm &\kern2mm 4\kern2mm &\kern2mm 5\kern2mm &\kern2mm 6\kern2mm &\kern2mm 7\kern2mm &\kern2mm 8\kern2mm &\kern2mm 9\kern2mm &\kern2mm 10\kern2mm \\ \hline  
\hline
20&95&7291&1575&153&6&&&&&&&\\
&&85&1&9&&&&&&&&\\ \hline
20&104&8902&1755&153&6&&&&&&&\\
&&94&1&9&&&&&&&&\\ \hline
20&122&12610&2115&153&6&&&&&&&\\
&&112&1&9&&&&&&&&\\ \hline
20&152&20230&2715&153&6&&&&&&&\\
&&142&1&9&&&&&&&&\\ \hline
20&156&21382&2795&153&6&&&&&&&\\
&&146&1&9&&&&&&&&\\ \hline
20&161&22867&2895&153&6&&&&&&&\\
&&151&1&9&&&&&&&&\\ \hline
20&173&26629&3153&135&12&&&&&&&\\
&&163&1&9&&&&&&&&\\ \hline
20&228&47590&4235&153&6&&&&&&&\\
&&218&1&9&&&&&&&&\\ \hline
20&251&58141&4713&135&12&&&&&&&\\
&&241&1&9&&&&&&&&\\ \hline
20&254&59602&4755&153&6&&&&&&&\\
&&244&1&9&&&&&&&&\\ \hline
20&311&90667&5895&153&6&&&&&&&\\
&&301&1&9&&&&&&&&\\ \hline
20&333&104395&6335&153&6&&&&&&&\\
&&323&1&9&&&&&&&&\\ \hline
20&348&114310&6635&153&6&&&&&&&\\
&&338&1&9&&&&&&&&\\ \hline
20&366&126802&6995&153&6&&&&&&&\\
&&356&1&9&&&&&&&&\\ \hline
20&408&158470&7835&153&6&&&&&&&\\
&&398&1&9&&&&&&&&\\ \hline
20&434&179842&8355&153&6&&&&&&&\\
&&424&1&9&&&&&&&&\\ \hline
20&686&457042&13395&153&6&&&&&&&\\
&&676&1&9&&&&&&&&\\ \hline
20&1166&1336402&22995&153&6&&&&&&&\\
&&1156&1&9&&&&&&&&\\ \hline
\end{tabular}
\]
\end{table}




\end{document}